\newtheorem{theorem}{Theorem}[section]
\newtheorem{lemma}[theorem]{Lemma}
\newtheorem{question}[theorem]{Question}
\newtheorem{proposition}[theorem]{Proposition}
\newtheorem{corollary}[theorem]{Corollary}
\theoremstyle{definition}
\newtheorem{definition}[theorem]{Definition}
\newtheorem{example}[theorem]{Example}
\newtheorem{remark}[theorem]{Remark}
\theoremstyle{plain}
\newcounter{MainTheoremCounter}
\newtheorem{Maintheorem}[MainTheoremCounter]{Theorem}
\newcommand{\A}{\mathcal{A}}
\newcommand{\OO}{\mathrm{Orb}}
\newcommand{\Z}{\mathbb{Z}}
\newcommand{\N}{\mathbb{N}}
\newcommand{\R}{\mathbb{R}}
\newcommand{\T}{\mathbb{T}}
\begin{document}

\title[On subshifts with low maximal pattern complexity]{On subshifts with low maximal pattern complexity}

\author{Anh N. Le}
\address{Anh N. Le\\
Department of Mathematics\\
University of Denver\\
2390 S. York St.\\
Denver, CO 80208}
\email{anh.n.le@du.edu}

\author{Ronnie Pavlov}
\address{Ronnie Pavlov\\
Department of Mathematics\\
University of Denver\\
2390 S. York St.\\
Denver, CO 80208}
\email{rpavlov@du.edu}
\urladdr{http://www.math.du.edu/$\sim$rpavlov/}

\author{Casey Schlortt}
\address{Casey Schlortt\\
Department of Mathematics\\
University of Denver\\
2390 S. York St.\\
Denver, CO 80208}
\email{casey.schlortt@du.edu}

\thanks{The second author gratefully acknowledges the support of a Simons Foundation Collaboration Grant.}

\keywords{subshifts, maximal pattern complexity, topological groups}
\renewcommand{\subjclassname}{MSC 2020}
\subjclass[2020]{Primary: 37B10; Secondary: 37B05}

\begin{abstract}
For a finite alphabet $\mathcal{A}$ and a sequence $x \in \A^{\mathbb{N}}$, Kamae and Zamboni defined the maximal pattern complexity function $p^*_x(n)$ as a natural generalization of usual word complexity. They defined a nonperiodic sequence $x$ to be pattern Sturmian if it achieves the minimal growth rate $p^*_x(n) = 2n$, and asked the question of whether one could classify recurrent pattern Sturmian sequences. 

We answer their question by characterizing recurrent pattern Sturmian sequences as one of two known types: either a coding of an irrational circle rotation by two intervals, or an element of what we call a nearly simple Toeplitz subshift.

We also show that nonrecurrent pattern Sturmian sequences are either very close to constant (such examples were given by Kamae and Zamboni) or a (nonrecurrent) coding of an irrational circle rotation by two intervals.

Our main new technique is to use topological properties of the maximal equicontinuous factor (MEF) of the subshift generated by $x$. In this way, we prove a general structural result about sequences with non-superlinear maximal pattern complexity: they are either nonrecurrent or minimal with MEF either an odometer or the product of a circle with a finite cyclic group.

\end{abstract}

\maketitle

\setcounter{tocdepth}{2}
\tableofcontents

\section{Introduction}





One of the most fundamental invariants in the study of dynamical systems is that of entropy. Informally, topological entropy measures the amount of chaoticity/unpredictability displayed by orbits of points in the system, by counting growth rates of orbits of length $n$ distinguishable at different scales; exponential growth corresponds to positive entropy.

However, there are a variety of important systems which have zero entropy, such as interval exchanges, many billiards, and rank one systems, and so one can use finer information to distinguish between such systems. This is sometimes a quite technical task due to dependence on scale, but for symbolically defined systems, it is particularly simple and reduces to a single function called word complexity. Specifically, given a sequence $x \in \mathcal{A}^{\N_0}$, for $\N_0 = \N \cup \{0\}$, over an alphabet $\mathcal{A}$, the \emph{word complexity} function $p_x$ is defined by $p_x(n)$ equal to the number of $n$-letter strings/words appearing within $x$. For a subshift $X$, one instead takes the union over all $x \in X$. For instance, if $X$ consists of all $\{0,1\}$-sequences without consecutive $1$s, then $p_X(3) = 5$, since the $3$-letter strings appearing in $X$ are $000, 001, 010, 100, 101$. 

The classical Morse-Hedlund theorem (see \cite{MorseHedlund}) states that $x$ is not eventually periodic if and only if $p_x(n) \geq n+1$ for all $n$. Since eventually periodic $x$ have bounded word complexity, this shows that $n+1$ is the slowest nontrivial growth rate for $p_x(n)$. Interestingly, this minimum growth rate does occur, and is equivalent to $x$ being a so-called \emph{Sturmian sequence} defined via codings of irrational circle rotations (see \Cref{sec:defs} for more details). Several recent works (\cite{CK2,CK1,DDMP1,DDMP2}) have focused on implications of linear growth of $p_x$, and \cite{Creutz_Pavlov-less_than_3/2, Creutz_Pavlov-low_complexity} have demonstrated that $p_x(n) \approx 1.5n$ is a threshold for several dynamical behaviors.

Continuing this thread of inquiry, in \cite{Kamae-Zamboni-sequence_entropy}, Kamae and Zamboni defined a  finer measure called maximal pattern complexity. For any $n$, the \emph{maximal pattern complexity} function $p^*_x$ is defined by $p^*_x(n)$ equals to the maximum, over all sets 
$\tau \subseteq \mathbb{Z}$ with $|\tau| = n$, of the number of patterns in $\mathcal{A}^{\tau}$ appearing within $x$. For instance, if $x = 010110 \ldots$, then $p^*_x(2) = 4$, because all four patterns $0\_0$, $0\_1$, $1\_0$, $1\_1$ appear in $x$.

A main result of \cite{Kamae-Zamboni-sequence_entropy} was an analog of the Morse-Hedlund theorem for maximal pattern complexity, stating that $x$ is not eventually periodic if and only if $p^*_x(n)\geq 2n$ for all $n$. For natural reasons, they defined sequences achieving the minimal growth rate of $p^*_x(n) = 2n$ as \emph{pattern Sturmian sequences}.

In \cite{Kamae-Zamboni-discrete, Kamae-Zamboni-sequence_entropy}, Kamae and Zamboni provided three classes of examples of pattern Sturmian sequences, which we call \emph{simple circle rotation coding sequences} (this includes all Sturmian sequences), \emph{simple Toeplitz sequences}, and \emph{almost constant sequences} (see \Cref{sec:defs} for definitions). In a later paper, Kamae and co-authors (\cite{Gjini_Kamae_Bo_Yu-Mei-toeplitz}) examined the Toeplitz case in more detail, and showed that a slightly larger class, which we call \emph{nearly simple Toeplitz sequences}, are also pattern Sturmian. 
Nearly simple Toeplitz sequences are recurrent (in fact uniformly recurrent), simple circle rotation coding sequences can be either recurrent or nonrecurrent, and almost constant sequences are highly nonrecurrent. In the paper \cite{Kamae-Zamboni-sequence_entropy}, Kamae and Zamboni asked the following question:

\begin{question}[{\cite[Problem 1]{Kamae-Zamboni-sequence_entropy}}]\label{ques:Kamae-Zamboni}
What is the general structure of recurrent pattern Sturmian sequences?
\end{question}

Our main results are a complete characterization of recurrent pattern Sturmian sequences (answering \Cref{ques:Kamae-Zamboni}) and a near characterization of nonrecurrent pattern Sturmian sequences, which together show that all examples are essentially of one of the three known types. 

\begin{Maintheorem}\label{mainthm:sturmian}
If $x \in \{0,1\}^{\N_0}$ is recurrent, then $x$ is pattern Sturmian if and only if it is either a recurrent simple circle rotation coding sequence or a sequence in a nearly simple Toeplitz subshift.
\end{Maintheorem}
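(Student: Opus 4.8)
\emph{Proof strategy.} The reverse implication is already available: recurrent simple circle rotation coding sequences are pattern Sturmian by \cite{Kamae-Zamboni-sequence_entropy}, and sequences in nearly simple Toeplitz subshifts by \cite{Gjini_Kamae_Bo_Yu-Mei-toeplitz}, so I would only argue the forward direction. Suppose $x$ is recurrent with $p^*_x(n)=2n$ for all $n$, and put $X=\overline{\{\sigma^m x:m\in\N_0\}}$. The first step is the general structural result: since $p^*_x$ is non-superlinear and $x$ is recurrent, $X$ is minimal, its maximal equicontinuous factor is a rotation $(Z,R_\alpha)$ with $Z$ either an odometer or isomorphic to $\T\times\Z/k\Z$ for some $k\ge1$, and the factor map $\pi\colon X\to Z$ is almost one-to-one. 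Hence $x$ is recovered from a coding $c\colon Z\to\{0,1\}$ continuous off a nowhere dense set $D$, with $D\ne\varnothing$ because $x$ is not eventually periodic. It then remains to handle the two shapes of $Z$ and to sharpen ``non-superlinear'' to ``$=2n$''.

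\emph{Circle case.} If $Z\cong\T\times\Z/k\Z$, I would restrict $c$ to each of the $k$ circles, where it is the indicator of a finite union of arcs, so that $D$ is the finite set of all of their endpoints. A pattern of $x$ along positions $\tau=\{t_1<\dots<t_n\}$ is governed by the location of the base point relative to the $n$ translates $R_\alpha^{-t_i}D$; since $n$ translates of a finite set of size $M$ cut $Z$ into $O(Mn)$ cells, $p^*_x(n)=O(Mn)$. Playing the equality $p^*_x(n)=2n$ against this bound should force $M$, and the way the endpoints must line up across the $k$ circles, to be as small as possible, so that $c$ changes value exactly twice around the natural cyclic order on $Z$ induced by $R_\alpha$; unwinding this identifies $x$ as the coding of an irrational circle rotation by two intervals, i.e.\ a recurrent simple circle rotation coding sequence.

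\emph{Odometer case.} If $Z=\varprojlim\Z/p_N\Z$ is an odometer, then $X$ is a Toeplitz subshift and $c$ is continuous off a nowhere dense subset of $Z$. I would use the clopen tower of height $p_N$ in $Z$ to read off the length-$p_N$ skeleton of $x$ (the positions determined at scale $N$), and show that $p^*_x(n)=2n$ restricts, at each scale, the way positions undetermined at the previous scale may be filled: any scale ``branching'' more than the permitted amount produces a set $\tau$ witnessing $p^*_x(n)>2n$. This restriction is precisely the combinatorial condition defining a nearly simple Toeplitz subshift in \cite{Gjini_Kamae_Bo_Yu-Mei-toeplitz}, so matching the two descriptions scale by scale places $x$ in such a subshift.

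\emph{Main obstacle.} The hard part is the structural result invoked at the start --- above all, ruling out every ``large'' monothetic group (tori of dimension $\ge2$, proper solenoids, $\T$ times an infinite odometer, and so on) as the maximal equicontinuous factor, and establishing that $\pi$ is almost one-to-one. The difficulty is that a topological factor map $X\to Z$ does not by itself force the pattern complexity of $X$ to reflect the topology of $Z$; the plan is to use almost one-to-one-ness to locate the orbit of a discontinuity point of $c$ inside $X$, to argue from maximality of the equicontinuous factor that this discontinuity set is not confined to a coset of a proper closed subgroup of $Z$, and then to convert a genuinely higher-dimensional (or otherwise ``thick'') discontinuity set into a superlinear lower bound for $p^*_x$. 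Within the two-case analysis, I expect the odometer case to be the more delicate, since one must match the raw inequality $p^*_x(n)=2n$ to the exact scale-by-scale definition of ``nearly simple'' and rule out exotic Toeplitz examples.
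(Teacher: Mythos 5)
Your architecture is the same as the paper's: reduce to the minimal case, show the MEF is either a circle times a finite cyclic group or an odometer with an almost one-to-one coding (this is the content of \cref{main}, \cref{lem:solenoid_or_dim2_cannot_disconnect} and \cref{thm:possible_MEF_nonsuperlinear}, and your plan of converting a ``thick'' boundary into superlinear complexity is indeed how the paper proceeds), and then split into the two shapes of the MEF. However, three load-bearing steps are missing or would not work as sketched. First, you simply assert that recurrence plus non-superlinear complexity gives minimality of $X$; in the paper this is \cref{mainthm:recnotmin}, proved by a separate nontrivial argument (a sliding block code onto the indicator of minimal-language words, then a window-doubling construction forcing complexity of order $n\ln n$), and nothing in your outline accounts for it. Second, in the circle case your cell-counting bound $p^*_x(n)=O(Mn)$ cannot by itself rule out $Z\cong\T\times\faktor{\Z}{k\Z}$ with $k\geq 2$: with $|B|=2$ that count is consistent with $2n+O(1)$, so one must exploit the exact equality $p^*_x(n)=2n$. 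The paper does this by choosing a window along multiples of $k$ so that the $2n$ words realized on the circle carrying the boundary are all nonconstant (\cref{lem:sturmian_no_constant_words}), and then a shift of that window by $1$ picks up the constant word coming from a monochromatic circle, giving $2n+1$ words (\cref{circcase}); your phrase ``the equality should force the endpoints to line up'' does not supply this mechanism.

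The most serious gap is in the odometer case. Finiteness of the boundary only gives $|B|\in\{1,2\}$, so $X$ could a priori be a genuine $2$-hole Toeplitz subshift, and the GKBY characterization (\cref{onehole}) that you plan to match ``scale by scale'' is a statement about $1$-hole Toeplitz sequences only; invoking it presupposes exactly what has to be proved. The heart of the paper's odometer case is \cref{b1}: assuming $|B|=2$, the two residue subsequences through the holes are pattern Sturmian $1$-hole Toeplitz sequences, hence (after reductions) simple Toeplitz; explicit maximal $3$-windows (\cref{1hole3lang}) force their generating letter sequences to agree eventually, and a further delicate argument shows the two holes must be antipodal, $j_k-i_k=n_k/2$ for all $k$, so that $X$ is in fact $1$-hole with respect to the halved period structure --- only then does \cref{onehole} (via \cref{mhole}) yield ``nearly simple.'' Your outline contains no substitute for this step, so as written the forward direction is incomplete in both cases, with the odometer case missing its central argument.
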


\begin{Maintheorem}\label{mainthm:sturmian-nonrecurrent}
If $x \in \{0,1\}^{\N_0}$ is nonrecurrent and pattern Sturmian, then it is either a nonrecurrent simple circle rotation coding sequence or almost constant.
\end{Maintheorem}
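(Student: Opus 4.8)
The plan is to study the orbit closure $X=\overline{O(x)}$ via its minimal subsystems and its maximal equicontinuous factor (MEF). Since a pattern on a finite set depends on only finitely many coordinates and $x$ has dense forward orbit in $X$, we get $p^*_M(n)\le p^*_X(n)=p^*_x(n)=2n$ for every subsystem $M\subseteq X$, and moreover every minimal subsystem of $X$ lies in $\omega(x)$. Fix a minimal subsystem $M\subseteq X$. If $M$ is infinite it is recurrent and not eventually periodic, so $p^*_M(n)=2n$ by the Kamae--Zamboni analogue of the Morse--Hedlund theorem, and then \Cref{mainthm:sturmian} identifies $M$ as a nearly simple Toeplitz subshift or a recurrent simple circle rotation coding subshift. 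So exactly one of the following holds: (I) $X$ has a nearly simple Toeplitz minimal subsystem; (II) $X$ has an infinite minimal subsystem which, once (I) is excluded, must be a simple circle rotation coding subshift; (III) every minimal subsystem of $X$ is a periodic orbit.

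The heart of the matter, and the main obstacle, is to rule out case (I) when $x$ is nonrecurrent. So suppose $M\subseteq X$ is a nearly simple Toeplitz subshift; its MEF is an infinite odometer $\varprojlim\Z/q_j\Z$. Because $x$ is nonrecurrent, $x\notin M$, so $x$ fails the ``periodicity along the holes'' structure defining $M$ at some position, a \emph{seam}. The key observation is that a single seam interacts with the Toeplitz skeleton at \emph{every} scale $q_j$, and I would exploit this by choosing, for suitable $n$, a window $\tau$ of size $n$ consisting of offsets clustered near the seam at several of the scales $q_j$, and showing that $x$ realizes more than $2n$ patterns on $\tau$, contradicting pattern Sturmianity. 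Turning this into a proof --- giving a sufficiently explicit description of the almost-periodic skeleton of nearly simple Toeplitz subshifts to choose $\tau$ and to determine exactly which patterns occur --- is where essentially all of the work lies. The reason the analogous difficulty does not arise in the circle case is that there the discontinuity set of the coding is finite, so a nonrecurrent point has only boundedly many places at which to deviate.

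In case (II), let $\pi\colon X\to Z$ be the factor map onto the MEF. As $X$ is transitive, $Z$ is transitive and equicontinuous, hence minimal, i.e.\ a rotation on a compact monothetic group; and since $Z$ is also an equicontinuous factor of the infinite minimal subsystem $M$, and conversely $M$'s MEF (a circle, possibly times a finite cyclic group, with irrational rotation) extends to a factor of $X$, one checks $Z$ coincides with it. Put $\beta=\pi(x)$. I would then show that $x_n$ must equal the coding value of $\beta+n\alpha$ for every $n$ with $\beta+n\alpha$ outside the orbit of the partition endpoints: if $x_n$ disagreed for more than these boundedly many indices $n$, then a window $\tau$ separately probing $x$ near several such disagreements would again yield more than $2n$ patterns. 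Hence $x$ is the coding of the orbit of $\beta$, a simple circle rotation coding sequence, and it is nonrecurrent by hypothesis. Finally, in case (III) the $\sigma^n x$ accumulate only on periodic orbits, so outside a set of positions whose consecutive gaps tend to infinity $x$ agrees with fixed periodic sequences; a direct combinatorial argument from $p^*_x(n)=2n$, in the spirit of Kamae and Zamboni's analysis of their examples and again using carefully chosen windows $\tau$ to detect any extra structure, shows that $X$ has a unique minimal subsystem, it is a fixed point, and $x$ is almost constant. This exhausts the three cases.
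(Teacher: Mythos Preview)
Your three-case decomposition matches the paper's structure, but there are genuine gaps in Cases (I) and (II). For Case (I), your proposed mechanism---building windows near a ``seam'' at several odometer scales to exhibit more than $2n$ patterns---is both underspecified (as you acknowledge) and runs in the opposite direction from the paper's argument. The paper (\Cref{thm:contain_minimal_toeplitz_cannot}) uses a \emph{structure-forcing} argument: \Cref{1hole3lang} produces, for each scale $k$, an explicit $3$-window $\tau_k$ on which the simple Toeplitz subsequence of $y$ has language exactly $\{000,001,010,100,101,111\}$. Since $x$ contains arbitrarily long subwords of $y$, these six words all lie in $L_x(n_k\tau_k)$, and $p^*_x(3)=6$ then \emph{forbids} $011$ and $110$ for $x$. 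That forbiddance forces every residue class of $x$ modulo $n_k$ except one to be constant, for every $k$, placing $x$ inside the Toeplitz orbit closure and making it recurrent. The point is not to manufacture extra patterns from a deviation but to use the pattern Sturmian ceiling to show no deviation can exist; it is not clear your seam idea can be made to work, since a single deviating position need not create a \emph{new} $\tau$-pattern outside $L_M(\tau)$.

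For Case (II), your assertion that the MEF of $X$ coincides with that of $M$ is unjustified and can fail outright: $X$ is not minimal, and if $X$ also contained a fixed point (not yet excluded at that stage), the MEF of the transitive system $X$ would be trivial, so there is no $\pi(x)$ to read off. The paper sidesteps MEF machinery entirely here: \Cref{lem:glue_up} takes an arithmetic-progression window $\tau$ for $y$ from \Cref{lem:sturmian_no_constant_words} with $|L_y(\tau)|=2n$ and $0^n,1^n\notin L_y(\tau)$, observes $L_y(\tau)\subseteq L_x(\tau)$ because $x$ contains arbitrarily long subwords of $y$, deduces $L_x(\tau)=L_y(\tau)$, and from this equality of languages forces each arithmetic subsequence of $x$ into the orbit closure of the corresponding subsequence of $y$, whence $x$ is itself a circle coding. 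Case (III) likewise rests on concrete combinatorics (\Cref{prop:no-two-orbit} to rule out two periodic orbits, then unique ergodicity via \Cref{mainthm:recnotmin} to get zero Banach density of deviations, then \Cref{prop:w-constant} to force the period word constant); your ``direct combinatorial argument in the spirit of Kamae and Zamboni'' is too vague to substitute for these.
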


The latter is only a near characterization since not all almost constant sequences are pattern Sturmian, and a main remaining question on this topic is to find a description of those which are; see \Cref{q:almcon}.

Another context in which pattern Sturmian subshifts have been important is in the study of the spectrum of Schr\"{o}dinger operators. A general heuristic in this area is that two-sided sequences of sufficiently `low complexity' should have Schr\"{o}dinger operators with spectrum of zero Lebesgue measure and all spectral measures purely singular continuous. This structure has been proved for Sturmian sequences (\cite{schrod1, schrod2}) and some simple Toeplitz sequences (\cite{schrod3}). In unpublished work (\cite{Damanik_Liu_Qu-spectralProperties}), Damanik, Liu, and Qu gave partial results in the setting of pattern Sturmian sequences. They showed the desired structure for nearly simple Toeplitz sequences, gave an outline for the use of S-adic decomposition to approach more general simple circle rotation coding sequences, and proved that two-sided non-recurrent almost constant sequences cannot be pattern Sturmian. They say ``given that the class of all pattern Sturmian sequences is not yet fully understood [...] it is more or less hopeless to attack Conjectures 1.2 and 1.4 head-on.'' A `head-on approach' is now possible: our Theorems \ref{mainthm:sturmian} and \ref{mainthm:sturmian-nonrecurrent} imply that all one-sided pattern Sturmian sequences come from either circle coding, nearly simple Toeplitz, or almost constant sequences. Since \cite{Damanik_Liu_Qu-spectralProperties} showed that the almost constant case is impossible for two-sided sequences and resolved their conjectures for nearly simple Toeplitz, the only remaining case is simple circle rotation coding sequences.

Previous results on pattern Sturmian sequences have been proved mostly via purely combinatorial methods. The main idea behind the proof of \Cref{mainthm:sturmian} is the use of structural results about the dynamics of the subshift $X$ generated by $x$. Specifically, sequences with subexponential maximal pattern complexity are known to yield so-called null subshifts, and any minimal null subshift is an almost $1$-$1$ extension of a group rotation, known as its maximal equicontinuous factor (see \Cref{kerrli}). 


Examination of these group rotations is central to our proofs, and in fact applies to the more general setting of non-superlinear  complexity: the sequence $x$ has \emph{non-superlinear maximal pattern complexity} if 
\[
    \liminf_{n \to \infty} \frac{p_x^*(n)}{n} < \infty.
\]
See \Cref{sec:defs} for definitions of other terms appearing in the next theorem.

\begin{Maintheorem}\label{mainthm:non-superlinear}
If $x \in \{0, 1\}^{\N_0}$ is recurrent, not periodic, and has non-superlinear maximal pattern complexity, then $x$ is uniformly recurrent and either
\begin{enumerate}
    \item a periodic interleaving of finitely many sequences which are each either a circle rotation interval coding sequence or constant, where all of the circle rotation interval coding sequences are associated with the same irrational rotation or
    \item an element of an $m$-hole Toeplitz subshift for some $m > 0$. 
\end{enumerate}
\end{Maintheorem}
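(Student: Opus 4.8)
Set $X := \overline{\{\sigma^n x : n \ge 0\}}$; it is harmless and convenient to pass to the associated two-sided subshift, which for recurrent $x$ carries the same combinatorial data. Since $x$ has dense orbit, $p^*_x = p^*_X$ (a pattern occurring anywhere in $X$ occurs in $x$, the relevant condition being clopen). The hypothesis $\liminf_n p^*_x(n)/n < \infty$ makes $p^*_X$ subexponential, so, as recalled in the introduction, $X$ is null; by \Cref{kerrli} every minimal subsystem of $X$ is an almost $1$-$1$ extension of its maximal equicontinuous factor, a minimal rotation on a compact monothetic group. The plan has three parts: (i) show $x$ is uniformly recurrent, so that $X$ \emph{itself} is a minimal almost $1$-$1$ extension $\pi\colon X \to (Z,+\alpha)$ of a monothetic rotation; (ii) pin down $Z$ from the complexity bound; (iii) read off the structure of $x$ in the two surviving cases.

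For (i) I would argue by contradiction. If $x$ is recurrent but not uniformly recurrent, then $X$ is non-minimal and some word $w$ occurs in $x$ with unbounded gaps between consecutive occurrences, while by recurrence $w$ occurs infinitely often and every prefix of $x$ recurs. The idea is to amplify this: at a large scale $N$, locate a long $w$-free stretch, a distant occurrence of the prefix $x_{[0,N)}$, and a recurrence of that prefix, and from these assemble a window $\tau$ of size $n$ along which $x$ displays more than $Cn$ patterns for every fixed $C$, the independence coming from the freedom in choosing where, relative to $w$-occurrences, to read. This contradicts non-superlinearity. (Alternatively, since null subshifts are tame, one can try to place $x$ in the unique minimal subsystem using the structure of the enveloping semigroup.) Equivalently, one may defer (i): first classify a minimal subsystem $Y\subseteq X$ as in (ii)--(iii), then show that a recurrent $x$ whose orbit closure contains such a $Y$ must lie in $Y$. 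I expect (i) to be the second main difficulty.

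Now take $X$ minimal with $\pi\colon X\to(Z,+\alpha)$ its maximal equicontinuous factor, $\alpha$ topologically generating $Z$. As $\pi$ is injective over a residual set, the coordinate $y\mapsto y_0$ descends, off the corresponding null set, to an indicator $\mathbf 1_A$ of some $A\subseteq Z$, and for every finite $\tau\subseteq\Z$ the patterns of $X$ along $\tau$ match, up to a bounded overcount from the almost $1$-$1$ ambiguity, the atoms of $\bigvee_{t\in\tau}(A-t\alpha)$; hence
\[
  p^*_X(n)\ \asymp\ \max_{\tau\subseteq\Z,\ |\tau|=n}\ \#\Big\{\text{atoms of }\bigvee_{t\in\tau}\big(A-t\alpha\big)\Big\}.
\]
The hypothesis makes the right-hand side $O(n)$ along an infinite set of $n$, and extracting the shape of $Z$ from this I expect to be the main obstacle. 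One first checks $X$ is not periodic, so $Z$ is infinite. Writing $Z_0$ for the identity component, the atom estimate should force either $Z_0$ trivial — whence $Z$, being compact monothetic, is an odometer — or $Z_0\cong\T$ with $Z/Z_0$ finite, whence $Z\cong\T\times\Z/k\Z$ with $\alpha=(\beta,\bar 1)$ and $\beta$ irrational. Indeed, if $Z_0$ were a solenoid other than $\T$, or if $Z_0$ were nontrivial with $Z/Z_0$ infinite, there would be an inverse system of ``levels'' at each of which $A$ contributes a fresh boundary point, and choosing $\tau$ to meet many levels would push the atom count above any linear bound.

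Finally (iii). If $Z$ is an odometer, the minimal expansive almost $1$-$1$ extension $X$ of $Z$ is (conjugate to) a Toeplitz subshift; with period structure $p_1\mid p_2\mid\cdots$ and holes $H_\ell\subseteq\Z/p_\ell\Z$ at level $\ell$, the atom estimate forces $\sup_\ell |H_\ell| =: m < \infty$ (unbounded holes would again make $p^*_X(n)/n\to\infty$ along a subsequence), and $m\ge 1$ since $x$ is not periodic; so $x$ lies in an $m$-hole Toeplitz subshift — conclusion (2). If $Z\cong\T\times\Z/k\Z$, the $\Z/k\Z$-factor gives $x$ a period-$k$ phase structure: after replacing $x$ by a suitable shift so that $\pi(x)=(\theta_0,\bar 0)$, the $j$-th phase of $x$ is $\big(\mathbf 1_{A_j}(\theta_0+(j+mk)\beta)\big)_{m\ge0}$ with $A_j=\{\theta\in\T:(\theta,j)\in A\}$, i.e.\ a coding of the irrational rotation $R_{k\beta}$ on $\T$. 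Applying the atom estimate to windows $\tau$ contained in a single phase forces each $A_j$ to be a finite union of arcs (infinitely many boundary points would already make the atom count superlinear) or to be $\emptyset$ or $\T$; hence each phase of $x$ is a circle rotation interval coding sequence associated with $R_{k\beta}$, or is constant, and $x$ is their periodic interleaving — conclusion (1). In both cases $X$ is minimal, so $x$ is uniformly recurrent, closing the loop.
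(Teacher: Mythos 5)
Your high-level plan coincides with the paper's strategy (first uniform recurrence, then the MEF of the resulting minimal system via \Cref{kerrli}, then the odometer versus circle-times-finite-cyclic dichotomy, then reading off Toeplitz or interleaved-coding structure), but the two steps you yourself flag as the main difficulties are left as sketches, and as sketched they do not close. Step (i) is a genuine theorem, not a routine amplification: the paper proves it as \Cref{mainthm:recnotmin} by pushing $x$ through a sliding block code that marks occurrences of $N$-words of a proper minimal subsystem, and then constructing nested windows $\tau_n$ with $|\tau_{n+1}|=2|\tau_n|$ along which $|L(\tau_{n+1})|\geq 2|L(\tau_n)|+2^n-1$, yielding the quantitative bound $\liminf p^*_x(n)/(n\ln n)>0$. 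Your alternative route (``null subshifts are tame, use the enveloping semigroup to place $x$ in the unique minimal subsystem'') cannot work as stated: whether a recurrent null sequence must be uniformly recurrent is recorded in the paper as an open question, so nullness alone is not enough and the non-superlinear hypothesis has to enter quantitatively, which your sketch (``more than $Cn$ patterns for every fixed $C$'') does not substantiate with any actual window construction.

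In step (ii), the mechanism you propose (``a fresh boundary point at each level pushes the atom count above any linear bound'') does not cover the cases that actually need work. For an identity component which is a solenoid or a torus of dimension at least $2$, the paper's argument is a purely topological one (\Cref{lem:solenoid_or_dim2_cannot_disconnect}): such groups cannot be written as two nonempty open sets plus a countable set at all (via uncountably many dense path components, in the solenoid case), and this contradicts the finiteness of $B$ supplied by \Cref{main}; your heuristic assumes, rather than proves, that the boundary would have to be infinite. More seriously, in the case $Z\cong\T\times\mathcal{O}$ with $\mathcal{O}$ an infinite odometer, the boundary can perfectly well be finite (two points inside a single circle fiber), so no atom-counting argument of the kind you describe can rule it out; the paper's argument there is of a different nature: since the odometer coordinate never returns, each point of the fiber $\T\times\{y_0\}$ visits it only at time $0$, so the uncountably many points of that fiber produce only two possible coding sequences, contradicting the fact that $X\to Z$ is a surjective factor map. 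Filling these two gaps is essentially the content of \Cref{mainthm:recnotmin} and \Cref{thm:possible_MEF_nonsuperlinear}; your part (iii) does track the paper's \Cref{part1} and \Cref{part2} and is fine once (i) and (ii) are in place.
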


A surprising open question in the theory of null systems is whether there exists a null sequence $x$ (meaning that $p^*_x(n)$ grows subexponentially) which is recurrent but not uniformly recurrent. One of the results used in our proof of \Cref{mainthm:sturmian} is the following, which shows that such sequences must have maximal pattern complexity growing on the order of $n \ln n$.

\begin{Maintheorem}\label{mainthm:recnotmin}
If $x$ is recurrent and not uniformly recurrent, then 
\[
\liminf_{n \to \infty} \frac{p^*_x(n)}{n \ln n} > 0.
\]
\end{Maintheorem}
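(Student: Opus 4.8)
The plan is to first distill from the hypotheses a purely combinatorial ``multi‑scale'' picture, and then to produce, for each large $n$, an explicit window witnessing $p^*_x(n)\gtrsim n\ln n$.

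\emph{Step 1 (a Toeplitz‑type hierarchy attached to $x$).} Write $u_N = x_{[0,N)}$. Since $x$ is recurrent, every $u_N$ reoccurs, so its occurrence set $P^{(N)}\subseteq\N_0$ is infinite; since $x$ is recurrent but not uniformly recurrent it is not eventually periodic, so there is a least $k$ for which $u_k$ occurs with unbounded gaps, and then $u_N$ has unbounded gaps (and still reoccurs) for every $N\ge k$. The sets $P^{(k)}\supseteq P^{(k+1)}\supseteq\cdots$ are nested with $\bigcap_N P^{(N)}=\varnothing$ (a point of the intersection would force $x$ periodic), so after passing to a subsequence of lengths $k=N_0<N_1<N_2<\cdots$ we get a strictly decreasing tower of infinite sets $P^{(N_0)}\supsetneq P^{(N_1)}\supsetneq\cdots$, each with unbounded gaps. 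I picture level $j$ as carrying a length scale $g_j$ (a typical spacing in $P^{(N_j)}$) with $g_0\le g_1\le\cdots\to\infty$, and — crucially — recurrence makes the hierarchy \emph{self‑similar}: the word of $x$ occupying the interval strictly between two consecutive occurrences of $u_{N_j}$ reoccurs verbatim wherever $u_{N_j}$ does, and already contains a full copy of the level‑$<j$ sub‑hierarchy.

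\emph{Step 2 (turning the hierarchy into patterns).} Fix a large $n$ and let $h$ be the largest index with $g_h\le n$. I would build a window $S$ with $|S|=O(n)$ as a disjoint union $S=S_0\cup S_1\cup\cdots\cup S_h$ of ``probes'', where $S_j$ is a short block of consecutive integers placed at an offset of order $g_j$, designed to read off the level‑$j$ fine structure. Using the strict inclusions $P^{(N_j)}\supsetneq P^{(N_{j+1})}$ one sees that the level‑$j$ probe has genuinely many admissible responses not forced by the coarser probes, and self‑similarity guarantees those responses recur throughout $x$; a bookkeeping argument then produces on the order of $n$ distinct responses per level and enough cross‑level independence that the number of $S$‑patterns is $\gtrsim nh$. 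The theorem follows once $h\gtrsim\ln n$, i.e.\ once at least logarithmically many scales fit inside a window of length $n$.

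\emph{Step 3 (the crux: controlling the growth of the scales).} The real obstacle is that a priori the $g_j$ could grow arbitrarily fast (a tower in $j$), leaving only $O(\log^\ast n)$ visible scales and seemingly ruining the bound. Here I would again exploit recurrence: because consecutive occurrences of $u_{N_j}$ are separated by the \emph{same} block up to the placement of a single level‑$j$ feature, those coarse features are ``rigidly'' positioned, so a window of length $n$ straddling one has its position relative to the feature ranging over $\Omega(n/g_j)$ admissible values, each yielding a distinct factor; and $\sum_{j:\,g_j\le n} n/g_j$ is a harmonic‑type sum already of size $\Omega(n\ln n)$ whenever the $g_j$ grow slowly, while if they grow quickly the probe construction of Step 2 supplies the missing patterns. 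Making this dichotomy — many visible scales \emph{or} large positional freedom at the few visible ones — quantitative and uniform in $n$ is the heart of the matter, and $n\ln n$ (rather than, say, $n^2$) is exactly what survives in the extremal, very sparse regime; it is also precisely the growth rate needed for the application to recurrent non‑minimal null sequences discussed above.
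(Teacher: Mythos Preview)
Your outline is not yet a proof; two of the load-bearing claims are either false as stated or left entirely unjustified.

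First, the ``self-similarity'' assertion in Step 1 is wrong. Recurrence of $x$ guarantees that the prefix $u_{N_j}$ reoccurs, but it says nothing about the content of the gaps: the word sitting between two consecutive occurrences of $u_{N_j}$ need not reoccur ``wherever $u_{N_j}$ does''; different gaps can (and generically do) have different lengths and different fillings. Without this rigidity your probes $S_j$ have no reason to read independent information, and the ``bookkeeping argument'' of Step 2 collapses. Second, Step 3 is precisely where the proof should be, and you acknowledge as much: you have a heuristic dichotomy (many scales vs.\ large positional freedom) but no mechanism that forces either side to produce $\Omega(n\ln n)$ patterns. The harmonic sum $\sum_{g_j\le n} n/g_j$ can be $O(n)$ even for slowly growing $g_j$ (e.g.\ $g_j=2^j$), so ``slow growth $\Rightarrow$ big harmonic sum'' fails; and when the $g_j$ grow fast you have not exhibited the probe construction that fills the gap.

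The paper avoids all of this by a completely different reduction. It passes to the proper minimal subsystem $Y\subsetneq\overline{\OO(x)}$, fixes $N$ with $L_X(N)\setminus L_Y(N)\neq\varnothing$, and applies the sliding block code $\phi$ sending $i\mapsto 1$ iff $x([i,i+N))\in L_Y(N)$. The image $x'=\phi x$ is recurrent, contains arbitrarily long blocks of $1$s, and contains a $0$. One then builds windows $\tau_n$ of size $2^n$ by a doubling trick: choose $K$ so large that $x'([0,M])$ recurs at position $K$, and set $\tau_{n+1}=\tau_n\cup(K+\tau_n)$. Every $\tau_n$-word $w\neq 1^{\tau_n}$ extends two ways (else a $0$ would recur with period $K$ forever, contradicting the long $1$-runs), and $1^{\tau_n}$ extends $2^n$ ways by sliding across a $1^D0$ block. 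This yields $|L_{x'}(\tau_{n+1})|\ge 2|L_{x'}(\tau_n)|+2^n-1$, hence $|L_{x'}(\tau_n)|\ge (n+2)2^{n-1}$, and pulling back through $\phi$ gives $p^*_x(N2^n)\ge(n+1)2^{n-1}$. The scale-control problem you identify simply does not arise: the window sizes are $2^n$ by construction, so $\ln(|\tau_n|)\asymp n$ automatically.
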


\noindent \textbf{Acknowledgments.} We thank David Damanik for informing us about the connection between pattern Sturmian sequences and spectrum of Schr\"{o}dinger operators. 

\section{Definitions and preliminaries}\label{sec:defs}

\subsection{Basics on symbolic dynamics and maximal pattern complexity}


We use $\N$ to denote the set of natural numbers $\{1, 2, \ldots\}$ and $\N_0 = \N \cup \{0\}$. An \emph{alphabet} is a finite set. Even though all the definitions and most of our theorems would generalize to arbitrary finite alphabet $\mathcal{A}$, for ease of notation, we will restrict to $\mathcal{A} = \{0, 1\}$ in this paper.
All sequences considered in this paper are one-sided, i.e. in $\A^{\N_0}$, and throughout, the space $\A^{\N_0}$ is endowed with the product topology. 

\begin{definition}
    Let $\sigma: \A^{\N_0} \to \A^{\N_0}$ be the \emph{(left) shift map} defined by $(\sigma x)(n) = x(n+1)$ for $n \in \N_0$.
    The \emph{orbit} of a sequence $x$, denoted $\OO(x)$, is the set $\{\sigma^n(x)\}_{n \geq 0}$, and the 
    \emph{orbit closure} of $x$ is $\overline{\OO(x)}$.
\end{definition}    

\begin{definition}
     A \emph{subshift} $(X, \sigma)$ is defined by $X \subseteq \A^{\N_0}$ which is closed and satisfies $\sigma(X) \subseteq X$.
     For any sequence $x$, its orbit closure $\overline{\OO(x)}$ is a subshift.
\end{definition}

\begin{definition}
    A sequence $x$ is \emph{periodic} if there exists some $t \in \N$ such that $\sigma^t(x) = x$, or equivalently such that $x(n) = x(n+t)$ for all $n \in \N_0$. A sequence $x$ is \emph{eventually periodic} if there exists some $s \in \N$ such that $\sigma^s(x)$ is periodic, or equivalently there exist $s,t \in \N$ such that $x(n) = x(n+t)$ for all $n > s$.
\end{definition}

\begin{definition}
    A sequence $x$ is \emph{recurrent} if for all $L \in \N$ there exists some $M \in \N$ such that 
    $x(0) \ldots x(L-1) = x(M) \ldots x(M+L-1)$.  If for all $L \in \N$, the set 
    $R_L=\{M: x(0) \ldots x(L-1) = x(M) \ldots x(M+L-1)\}$ is syndetic (there are bounded gaps between subsequent elements), then $x$ is \emph{uniformly recurrent}.
\end{definition}

\begin{definition}
A subshift $X$ is \emph{minimal} if it does not properly contain any nonempty subshift. It is well-known that $x$ is uniformly recurrent if and only if $\overline{\OO(x)}$ is minimal.
\end{definition}

\begin{definition}
    A \emph{window} $\boldsymbol{\tau}$ is any finite subset of $\mathbb{N}_0$. The \emph{$\tau$-language} of a sequence $x$ is $L_x(\tau) := \{(\sigma^n x)(\tau)\}_{n \in \N_0} \subseteq \A^\tau$.
    The \emph{$\tau$-language} of a subshift $X$ is $L_X(\tau) := \bigcup_{x \in X} L_x(\tau)$. The \emph{maximal pattern complexity} of a sequence $x$ is defined by $p^*_x(n) = \max_{|\tau| = n} |L_x(\tau)|$.
    In the case $\tau = \{0,1, \ldots, n-1\}$, we represent $L_x(\tau)$ and $L_X(\tau)$ by $L_x(n)$ and $L_X(n)$ respectively.
    \end{definition}

The following analogue of the Morse-Hedlund theorem (\cite{MorseHedlund}) was proven in \cite{Kamae-Zamboni-sequence_entropy}.

\begin{theorem}[\cite{Kamae-Zamboni-sequence_entropy}]\label{KZthm}
If $x$ is not eventually periodic, then $p^*_x(n) \geq 2n$ for all $n$.
\end{theorem}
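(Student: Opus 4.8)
The plan is to follow the template of the classical Morse--Hedlund argument, with ``refining a window by one position'' playing the role of ``extending a word by one letter.'' Two easy preliminaries come first. Monotonicity of $p^*_x$: if $\tau\subseteq\tau'$, the restriction map $\A^{\tau'}\to\A^{\tau}$ sends $L_x(\tau')$ onto $L_x(\tau)$, so $p^*_x$ is non-decreasing. The base value: a non-eventually-periodic $x$ is non-constant, and in fact uses each letter infinitely often, so $p^*_x(1)=2$. Given these, it suffices to prove the \emph{double-increment property}:
\[
\text{for every window }\tau\text{ there is some }t\notin\tau\text{ with }|L_x(\tau\cup\{t\})|\ge|L_x(\tau)|+2.
\]
Indeed, choosing $\tau$ with $|\tau|=n$ and $|L_x(\tau)|=p^*_x(n)$ (a maximizer exists since $|L_x(\tau)|\le 2^{|\tau|}$) then gives $p^*_x(n+1)\ge p^*_x(n)+2$, and induction from $p^*_x(1)=2$ yields $p^*_x(n)\ge 2n$.

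To prove the double-increment property I would argue contrapositively: if a window $\tau$ has the feature that for each $t\notin\tau$ at most one pattern $w\in L_x(\tau)$ ``branches'' at $t$ (meaning both extensions of $w$ to a pattern on $\tau\cup\{t\}$ occur in $x$), then $x$ is eventually periodic. It is convenient to phrase this in the orbit closure $X=\overline{\OO(x)}$, where $L_x(\tau)=L_X(\tau)$ and the cylinders $A_w:=\{y\in X:y|_\tau=w\}$, $w\in L_x(\tau)$, partition $X$; now ``$w$ branches at $t$'' says precisely that the coordinate $t$ is non-constant on $A_w$, and $|L_x(\tau\cup\{t\})|-|L_x(\tau)|$ counts the atoms that are non-constant on coordinate $t$. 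The hypothesis then says that for every $t\notin\tau$ at most one atom is non-constant on coordinate $t$, so the sets $S_w:=\{t\notin\tau: y\mapsto y(t)\text{ is non-constant on }A_w\}$ are pairwise disjoint; moreover off $S_w$ every $y\in A_w$ coincides with a fixed sequence depending only on $w$, so $y$ is completely determined by $w$ together with $y|_{S_w}$. If all the $S_w$ are finite this forces $X$, hence $\OO(x)$, to be finite, so $x$ is eventually periodic and we are done.

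The remaining case is the crux: $X$ is infinite, so some atom $A_{w_0}$ is infinite, whence $S_{w_0}$ is infinite and the occurrence set $M_0=\{m\in\N_0: x|_{m+\tau}=w_0\}$ is infinite, while every branching coordinate associated to $w_0$ lies in $S_{w_0}$. The idea is to use that $X$ is shift-invariant: the cylinder $A_{w_0}$ is carried by $\sigma$ into finitely many cylinders, and combining this with the rigidity ``$y\in A_{w_0}$ is determined by $y|_{S_{w_0}}$ and agrees off $S_{w_0}$ with a single fixed sequence'' should pin down $x$ everywhere --- at the positions outside $M_0+S_{w_0}$ through the fixed-sequence part, and at those inside $M_0+S_{w_0}$ by chasing the shift --- and hence produce eventual periodicity after all. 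I expect this step, ruling out an infinite cylinder that ``absorbs'' all of the branching, to be the main obstacle, and the place where one must genuinely use that $X$ is the orbit closure of a single sequence rather than an arbitrary subshift.

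Finally I would assemble the induction: $p^*_x(1)=2$ together with $p^*_x(n+1)\ge p^*_x(n)+2$ gives $p^*_x(n)\ge 2n$ for all $n$, which is the theorem. Note that stopping the same argument one increment earlier (the ``at most zero branches'' case) already recovers the weaker fact that $p^*_x$ is strictly increasing for non-eventually-periodic $x$, so the real content beyond Morse--Hedlund is upgrading ``one new pattern per step'' to ``two new patterns per step,'' which is exactly what the infinite-cylinder analysis above is for.
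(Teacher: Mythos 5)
Your reduction is set up correctly (monotonicity, $p^*_x(1)=2$, and the passage to the orbit closure where the increment $|L_x(\tau\cup\{t\})|-|L_x(\tau)|$ counts the atoms $A_w$ that are non-constant at coordinate $t$), and the trivial case where all the sets $S_w$ are finite is handled properly. Note, for calibration, that the paper itself does not prove this statement---it quotes it from Kamae--Zamboni---and your framework so far coincides with the standard opening of their argument: assume every added coordinate has at most one branching pattern and try to force eventual periodicity.

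The problem is that your Case 2 (some $S_{w_0}$ infinite) is precisely the entire content of the theorem, and you have not given an argument for it: ``should pin down $x$ everywhere \dots by chasing the shift'' is an expectation, not a proof, and you say as much yourself. Concretely, the rigidity you established says nothing about the coordinates in $S_{w_0}$: if $M_0$ is the occurrence set of $w_0$ in $x$, then at every position of $M_0+S_{w_0}$ your constraints are vacuous, and since $S_{w_0}$ is infinite these free coordinates are exactly what permits the atom $A_{w_0}$ (hence $X$) to be infinite. Shifting does not obviously help, because when you slide the window from an occurrence $m$ of $w_0$, the pattern seen at the new position may again be a branching one (either $w_0$ itself or another $w$ with its own nonempty $S_w$), so the ``fixed-sequence part'' never reaches the undetermined coordinates; the real work in Kamae--Zamboni is to show that the occurrence set of the branching pattern is itself so constrained that $x$ must be eventually periodic, and that analysis is absent here. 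A small sign that the endgame is not yet thought through: in Case 2 you cannot literally ``produce eventual periodicity after all,'' since eventual periodicity makes $\overline{\OO(x)}$ finite, contradicting the case hypothesis that some atom is infinite---so the case must close with a contradiction, via an argument you still need to supply. As written, the proposal is an accurate reduction of the theorem to its hard step, with that step missing.
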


In analogue with Sturmian sequences, which have minimum block complexity among nonperiodic sequences (for example, see \cite{fogg}), Kamae and Zamboni defined \emph{pattern Sturmian sequences} as those of minimum maximal pattern complexity. It is immediate that such sequences have alphabet with two letters, so we do not lose any generality in assuming 
$\A = \{0,1\}$ for such sequences. 

\begin{definition}
    A sequence $x \in \{0,1\}^{\N_0}$ is \emph{pattern Sturmian} if $p^*_x(n) = 2n$ for all $n$, and the orbit closure of a pattern Sturmian sequence is called a \emph{pattern Sturmian subshift}.
\end{definition}

\subsection{Three classes of known pattern Sturmian sequences}

Three classes of pattern Sturmian sequences are known, which we summarize here. (Hereafter, when we refer to the \emph{circle} or \emph{torus}, we mean the quotient space $\T = \R/\Z$ which can be canonically identified with the unit interval $[0, 1)$.)

\subsubsection{Simple circle rotation coding sequences}

\begin{definition}\label{def:coding}
A sequence $x$ is a \emph{circle rotation interval coding sequence} if there exist an irrational $\alpha \in [0,1)$, a partition of $[0,1)$ into $k$ intervals $I_0, \ldots, I_{k-1}$, and letters $a_0, \ldots, a_{k-1}$ not all equal so that
$x(n) = a_i$ if and only if $n\alpha \pmod 1 \in I_i$. In the case $k = 2$, $x$ is called a \emph{simple circle rotation coding sequence}. If we further assume that both intervals are half-open, and one of the intervals has length exactly $\alpha$, then $x$ is called a \emph{Sturmian sequence}.
\end{definition}

It was shown in \cite{Kamae-Zamboni-sequence_entropy} that simple circle rotation coding sequences with intervals of the form $[a,b)$ are pattern Sturmian, but in fact their proof can be easily adapted to all simple circle rotation coding sequences. We give a quick proof here for completeness. 

\begin{lemma}\label{lem:simple_rotation_pattern_Sturmian}
    Every simple circle rotation coding sequence is pattern Sturmian.
\end{lemma}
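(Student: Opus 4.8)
The plan is to bound $|L_x(\tau)|$ for an arbitrary window $\tau = \{t_0 < t_1 < \cdots < t_{n-1}\} \subseteq \N_0$ with $n$ elements, and show it is at most $2n$; combined with \Cref{KZthm} (since a circle rotation interval coding sequence with letters not all equal is never eventually periodic, as $\alpha$ is irrational) this gives $p^*_x(n) = 2n$. Let $I_0 = [c, d)$ and $I_1 = \T \setminus I_0$ be the two intervals defining $x$ (after a rotation we may assume $I_0$ is an interval with endpoints $c$ and $d$; if the partition is into two half-open intervals this is immediate, and the general case of two intervals whose union is $\T$ differs only in which endpoints are included, which affects the count by a bounded amount that I will absorb). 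Writing $y(n) = n\alpha \bmod 1 \in \T$, the pattern $(\sigma^m x)(\tau)$ is determined by which of the $n$ points $m\alpha + t_0\alpha, \ldots, m\alpha + t_{n-1}\alpha$ lie in $I_0$; equivalently, by the indicator vector recording, for the point $z = m\alpha \in \T$, which of the $n$ translates $z + t_j\alpha$ fall in $[c,d)$.

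So the key step is: for a fixed finite set of translation amounts $s_0, \ldots, s_{n-1} \in \T$ (here $s_j = t_j \alpha$), how many distinct vectors $\big(\mathbf{1}_{[c,d)}(z + s_0), \ldots, \mathbf{1}_{[c,d)}(z + s_n)\big)$ arise as $z$ ranges over $\T$? The point is that $z + s_j \in [c,d)$ if and only if $z \in [c - s_j, d - s_j)$, so the indicator vector is locally constant in $z$ and changes only when $z$ crosses one of the $2n$ points $\{c - s_j\} \cup \{d - s_j\}$ (indices $j = 0, \ldots, n-1$). Cutting the circle at these $2n$ points produces at most $2n$ arcs, on each of which the vector is constant; hence at most $2n$ distinct patterns arise from points $z$ that equal one of these $2n$ breakpoints is already counted in an adjacent arc because the intervals are half-open, so the total is at most $2n$. (One must take a tiny bit of care: the breakpoints $c - s_j$ and $d - s_j$ need not be distinct, which only decreases the count; and the values $z = m\alpha$ for $m \in \N_0$ are a subset of $\T$, so the number of patterns actually appearing in $x$ is at most the number appearing over all $z$, which is $\le 2n$.)

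Putting these together: $|L_x(\tau)| \le 2n$ for every window $\tau$ of size $n$, so $p^*_x(n) \le 2n$; and $p^*_x(n) \ge 2n$ by \Cref{KZthm} because $x$ is not eventually periodic. Therefore $p^*_x(n) = 2n$ for all $n$ and $x$ is pattern Sturmian.

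The main obstacle, or rather the only subtlety, is the boundary-point bookkeeping — handling the cases where the defining intervals are not both half-open of the form $[c,d)$, where some of the $2n$ breakpoints coincide, or where an orbit point $m\alpha$ lands exactly on an endpoint $c$ or $d$. None of these can increase the count above $2n$: a general partition of $\T$ into two intervals is obtained from the half-open model by moving finitely many (at most two) boundary points between the two cells, and since $\{m\alpha : m \in \N_0\}$ meets the endpoints $c, d$ at most once each (again by irrationality of $\alpha$), this changes $|L_x(\tau)|$ by at most a bounded additive constant — and in fact a slightly more careful version of the arc-counting argument, choosing for each breakpoint the half-open convention matching the actual partition, shows the bound $2n$ holds on the nose. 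I would phrase the argument directly in terms of the $\le 2n$ breakpoints on $\T$ and the arcs they determine, which makes the count transparent and sidesteps the need for the Morse–Hedlund-type lower bound to do anything other than supply the matching inequality $p_x^*(n) \ge 2n$.
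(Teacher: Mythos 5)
Your approach is the same as the paper's: bound $|L_x(\tau)|$ by the number of nonempty cells of the common refinement $\bigvee_{j\in\tau}(\xi - j\alpha)$, observe that these cells are cut out by the at most $2n$ breakpoints $\{c - t_j\alpha\}\cup\{d - t_j\alpha\}$, and invoke \Cref{KZthm} for the matching lower bound. In the generic case (both intervals half-open, or no resonance between the endpoints and the orbit of $\alpha$) your argument is complete and correct.

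The one place where your bookkeeping is asserted rather than verified is the case the paper spends its third paragraph on: one interval closed, say $I_0=[c,d]$, with $c-d$ an integer multiple of $\alpha$. There a single point $z_0$ can be simultaneously a ``$c$-type'' breakpoint for some $j$ and a ``$d$-type'' breakpoint for some $j'$; at such a $z_0$ coordinate $j$ of the indicator vector agrees with the arc to one side while coordinate $j'$ agrees with the arc to the other side, so the vector at $z_0$ is a genuinely new pattern (a singleton cell of the refinement) not represented on any adjacent arc. Your parenthetical that coincident breakpoints ``only decrease the count'' misses that each such coincidence simultaneously \emph{adds} one singleton pattern while removing one arc; the bound $2n$ survives only because these two effects cancel exactly, and that cancellation is the content of the paper's count $2n - |\tau\cap(\tau-n)|$ arcs plus $|\tau\cap(\tau-n)|$ singletons. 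Since pattern Sturmian demands $p^*_x(n)=2n$ on the nose, an additive error here would sink the lemma, so this case needs to be written out rather than absorbed into ``a slightly more careful version of the arc-counting argument.'' With that paragraph supplied, your proof coincides with the paper's.
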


\begin{proof}
Suppose that $x$ is a simple circle rotation coding sequence induced by $\alpha \notin \mathbb{Q}$ and partition 
$\xi = \{I_0, I_1\}$ of $[0,1)$ into intervals, and without loss of generality that $x(n) = i$ if and only if $n \alpha \pmod{1} \in I_i$. Denote the endpoints of the intervals $I_0, I_1$ by $y$ and $z$. Then for any window $\tau$ with $|\tau| = n$, if a word $w \in \{0,1\}^\tau$ is a $\tau$-subword of $x$, say $w = x(i+\tau)$, it means that the set 
$\bigcap_{j \in i + \tau} (I_{x(j)} - j\alpha)$ contains $0$, which implies that 
$\bigcap_{j \in \tau} (I_{x(i+j)} - j\alpha) \neq \varnothing$. Therefore, $|L_x(\tau)|$ is bounded from above by the number of nonempty sets in the partition $\bigvee_{j \in \tau} (\xi - j\alpha)$. 

If both $I_0$, $I_1$ are half-open, then all sets in this partition are themselves half-open intervals, determined by endpoints $\bigcup_{j \in \tau} \{y - j\alpha, z - j\alpha\}$. There are clearly at most $2n$ such points, and so $|L_x(\tau)| \leq 2n$. Since $\tau$ was arbitrary, $p^*_x(n) \leq 2n$, and so by \Cref{KZthm}, $x$ is pattern Sturmian.

If one of $I_0, I_1$ is closed, but $y - z \neq n\alpha$ for all $n \in \Z$, then all elements of $\bigvee_{j \in \tau} (\xi - j\alpha)$ are still intervals, and the same proof applies. The only remaining case is where
$y - z = n\alpha$ for some $n$; we can assume $n > 0$ by switching $y,z$ if necessary. Now, if $\tau$ contains elements with difference $n$, then we may get nonempty sets in $\bigvee_{j \in \tau} (\xi - j\alpha)$ which are not intervals but singletons; for instance, if $I_0 = [y, z]$, then we may have $I_0 \cap (I_0 - n\alpha) = \{z\}$. However, we can still show that $\left|\bigvee_{j \in \tau} (\xi - j\alpha)\right| \leq 2n$. 

Note that any singleton in $\bigvee_{j \in \tau} (\xi - j\alpha)$ corresponds to a pair $j, j' \in \tau$ for which 
$y - j\alpha = z - j'\alpha \Leftrightarrow j = j' + n$, and the number of such pairs is $\tau \cap (\tau - n)$. However, the number of intervals in this partition is bounded from above by $\bigcup_{j \in \tau} \{y - j\alpha, z - j\alpha\}$, whose cardinality is precisely $2n - |\tau \cap (\tau - n)|$. Therefore, the total number of sets in $\bigvee_{j \in \tau} (\xi - j\alpha)$ is still bounded from above by $2n$, completing the proof.
    
\end{proof}

It is important to note that when all intervals are half-open, it is easily shown that circle rotation interval coding sequences are recurrent. However, when at least one interval is closed, they can be nonrecurrent. For instance, if $\alpha < 1/2$, $I_0 = [0,\alpha]$, $I_1 = (\alpha, 1)$, $a_0 = 0$, and $a_1 = 1$, the induced simple circle rotation coding sequence begins with $00$ (since $0, \alpha \in I_0$), but contains no other consecutive $0$s (since $\alpha$ is irrational and $x, x+\alpha \in I_0$ happens only for $x = 0$). In fact, this phenomenon generalizes to the following characterization of nonrecurrent simple circle rotation coding sequences. 

\begin{proposition}
If $x$ is a simple circle rotation coding sequence with rotation number $\alpha$, then $x$ is nonrecurrent if and only if one of the intervals has the form $(k_1 \alpha, k_2 \alpha)$ or $[k_1 \alpha, k_2 \alpha] \bmod 1$ for some $k_1 \neq k_2 \in \N_0$.
\end{proposition}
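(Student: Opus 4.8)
The plan is to work on the circle $\T = \R/\Z$, using that $\sigma^M x$ depends only on $M\alpha \bmod 1$. We may assume the two intervals $I_0, I_1$ are non-degenerate, so that they partition $[0,1)$ and share exactly two endpoints $y \neq z$; let $\mathbf{1}\colon \T \to \{0,1\}$ be the function equal to $a_i$ on $I_i$, and for $t \in \T$ put $c(t)(n) = \mathbf{1}(t+n\alpha)$. Then $x = c(0)$ and $\sigma^M x = c(M\alpha)$, the only discontinuities of $\mathbf{1}$ are $y$ and $z$, and $c$ is continuous off the countable set $\bigcup_{n \geq 0}(\{y,z\} - n\alpha)$.

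First I would prove the reformulation: $x$ is recurrent if and only if $\mathbf{1}$ is right-continuous at every point of $\N_0\alpha := \{n\alpha \bmod 1 : n \in \N_0\}$, or $\mathbf{1}$ is left-continuous at every point of $\N_0\alpha$. For the ``if'' direction, note $\{M\alpha : M \geq 1\}$ accumulates at $0$ from both sides; if $\mathbf{1}$ is right-continuous along $\N_0\alpha$, pick $M_k \to \infty$ with $M_k\alpha \to 0^+$, so that for each fixed $n$ the points $(M_k+n)\alpha$ decrease to $n\alpha$ and hence $\mathbf{1}((M_k+n)\alpha) = \mathbf{1}(n\alpha)$ for all large $k$, giving $\sigma^{M_k}x \to x$. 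For the ``only if'' direction, recurrence provides $M_k \to \infty$ with $\sigma^{M_k}x \to x$, and after passing to a subsequence $M_k\alpha \to \beta \in \T$. The key claim is that $\beta = 0$: otherwise $\mathbf{1}$ and its rotate $\mathbf{1}(\cdot+\beta)$ differ precisely on $I_0 \triangle (I_0-\beta)$, the symmetric difference of the arc $I_0$ with a proper rotate of itself, which is a nonempty finite union of arcs; after removing the two points $y-\beta, z-\beta$ this is still a nonempty open set, so by density of $\N_0\alpha$ there is $n_0$ with $n_0\alpha$ inside it, and then $c(M_k\alpha)(n_0) \to \mathbf{1}(\beta + n_0\alpha) \neq \mathbf{1}(n_0\alpha) = c(0)(n_0)$, contradicting $\sigma^{M_k}x \to x$. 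Hence $M_k\alpha \to 0$; passing to a further subsequence it tends to $0$ from a fixed side, and convergence from above forces (as in the ``if'' argument, run in reverse) right-continuity of $\mathbf{1}$ at every $n\alpha$, while convergence from below forces left-continuity.

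Next I would read off the conclusion from the partition. Since $\mathbf{1}$ is locally constant away from $\{y,z\}$, the set $E^+$ of its right-discontinuities is contained in $\{y,z\}$ — namely those endpoints that belong to their interval as its right (counterclockwise) end — and similarly for the set $E^-$ of left-discontinuities; thus $x$ is non-recurrent if and only if both $E^+ \cap \N_0\alpha$ and $E^- \cap \N_0\alpha$ are nonempty. Now a partition of $[0,1)$ into two non-degenerate intervals is either $\{[0,a),[a,1)\}$ or $\{[0,a],(a,1)\}$ for some $a \in (0,1)$. In the first case $E^+ = \varnothing$, so $x$ is recurrent, and neither interval has the form claimed in the statement (both are half-open). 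In the second case one checks directly that $E^- = \{0\}$ and $E^+ = \{a\}$; since $0 = 0\cdot\alpha$ always lies in $\N_0\alpha$, $x$ is non-recurrent if and only if $a \in \N_0\alpha$, say $a = m\alpha \bmod 1$ with $m \in \N$ — and this is exactly the condition that $[0,a] = [0\cdot\alpha, m\alpha]$, equivalently $(a,1) = (m\alpha, 0\cdot\alpha)$, is one of the intervals of the stated form. Combining the two cases proves the proposition.

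The step I expect to be the main obstacle is the claim $\beta = 0$, with the attendant care about one-sided limits: one must verify that $n\alpha$ or $\beta + n\alpha$ lands on an endpoint for only finitely many $n$, so that density of $\N_0\alpha$ can be used on the complement, and one must correctly line up ``$\mathbf{1}$ fails to be right/left-continuous at $e$'' with the closed-endpoint pattern of $I_0, I_1$. The remaining ingredients — the identity $\sigma^M x = c(M\alpha)$, the symmetric-difference estimate, and the short enumeration of partitions — are routine.
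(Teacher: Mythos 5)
Your proof is correct, but it follows a genuinely different route from the paper's. The paper proves only the ``if'' direction on the spot, by exhibiting an explicit window (an arithmetic progression of step $k_2-k_1$) along which the all-$0$ pattern occurs at exactly one position, and it delegates the ``only if'' direction to \cref{prop:nonrecurrent_pattern_Sturmian}, which in turn rests on the machinery developed later (\cref{mainthm:sturmian}, the MEF analysis, etc.). You instead give a self-contained elementary argument for both directions: you reformulate recurrence of the coding sequence as one-sided continuity of the coding function $\mathbf{1}$ at every point of the orbit $\{n\alpha\}_{n\in\N_0}$ (with the key step being that return times $M_k$ must satisfy $M_k\alpha\to 0$, proved via the symmetric difference $I_0\triangle(I_0-\beta)$ having nonempty interior when $\beta\neq 0$), and then you read the proposition off an exhaustive enumeration of two-interval partitions of $[0,1)$, namely $\{[0,a),[a,1)\}$ and $\{[0,a],(a,1)\}$. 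What your approach buys is independence from the later results (no forward reference, no pattern Sturmian input at all), while the paper's choice keeps this preliminary section short by reusing a proposition it needs anyway, and its ``if'' argument records the slightly more concrete fact of a once-occurring pattern. Two cosmetic points: the set $\bigl(I_0\triangle(I_0-\beta)\bigr)\setminus\{y-\beta,z-\beta\}$ need not itself be open (it may retain closed endpoints such as $y$ or $z$), but it contains a nonempty open set, which is all the density argument requires; and your blanket assumption that $I_0,I_1$ are non-degenerate is the same convention the paper uses implicitly (its proof of \cref{lem:simple_rotation_pattern_Sturmian} already speaks of two endpoints $y\neq z$), so it is fine to state it as you do.
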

\begin{proof}
For the ``only if'' direction, see \cref{prop:nonrecurrent_pattern_Sturmian}. To prove the ``if'' direction, let $[0, 1) = I_0 \cup I_1$ be the partition of the circle associated to $x$. If an interval has the form $[k_1 \alpha, k_2 \alpha]$, then the other interval has the form $(k_2 \alpha, k_1 \alpha)$ and vice versa. Therefore, without loss of generality, we can assume $I_0 = [k_1 \alpha, k_2 \alpha]$ and $k_1 < k_2$. Then $k_1 \alpha, k_2 \alpha \in I_0$, and if the length $|I_0|$ of $I_0$ is greater than $1/2$, we also get $k_1 + i(k_2 - k_1)\alpha \in I_0$ for $1 < i \leq 1/(1 - |I_0|)$.
%

Then, for the window $\tau = \{0, k_2 - k_1, 2(k_2 - k_1), \ldots, \lfloor \frac{1}{1 - |I_0|} \rfloor (k_2 - k_1)\}$, 
\[
    x(k_1 + \tau) = 0 \ldots 0.
\]
However, for all $n \neq k_1$, 
\[
    x(n + \tau) \neq 0 \ldots 0,
\]
and so $x$ is not recurrent.
\end{proof}

Regardless of the forms of intervals, all simple circle rotation coding sequences are pattern Sturmian, as shown in \cref{lem:simple_rotation_pattern_Sturmian}. This is in contrast to usual word complexity where being Sturmian requires both intervals to be half-open and have length $\alpha$ and $1 - \alpha$ respectively. 




\subsubsection{Nearly simple Toeplitz sequences}

For the second class of pattern Sturmian sequences, we need some definitions about Toeplitz sequences.

\begin{definition}
A sequence $x$ is a \emph{Toeplitz sequence with period structure $(n_k) \subseteq \N$} if 
\begin{enumerate}
    \item $n_k$ properly divides $n_{k+1}$ for all $k$,
    
    \item and there exists a partition of the form $\N_0 = \bigsqcup_{k,i} (a_{k,i} + n_k \N_0)$ where $x$ is constant on each infinite arithmetic progression $a_{k,i} + n_k \N_0$. 
\end{enumerate}
The number of \emph{holes at step $k$} in $x$ is $n_k \left(1 - \sum_{j = 1}^k \frac{|\{a_{j,i}\}_i|}{n_j}\right)$, i.e., the number of nonconstant arithmetic progressions in $x$ modulo $n_k$. If $x$ has $m$ holes at every step, it is called a \emph{$m$-hole Toeplitz sequence}. If $x$ is a $1$-hole Toeplitz sequence and $a_{k,i}$ is independent of $i$ for each $k$ (i.e., if for each $k$, all constant progressions in $x$ modulo $n_k$ take the same value), then $x$ is called a \emph{simple Toeplitz sequence}.
A ($m$-hole/simple) \emph{Toeplitz subshift with period structure $(n_k)$} is the closure of the orbit of a ($m$-hole/simple) Toeplitz sequence with that period structure.
\end{definition}

It is well-known that all Toeplitz sequences are uniformly recurrent, and so all Toeplitz subshifts are minimal.


We note a subtlety in these definitions; not all elements of a Toeplitz subshift are themselves Toeplitz sequences. For instance, if $x$ with period structure $(2^k)$ is defined for $m \geq 0$ by $x(m) = i \pmod 2$ if and only if $m+1 = 2^i q$ for some odd integer $q$, then $x$ is a Toeplitz sequence associated to the partition $\N_0 = \bigsqcup (2^{k-1} - 1 + 2^k \N_0)$. However, the Toeplitz subshift $X = \overline{\OO(x)}$ contains the sequence $x'$ defined by $x'(0) = 1$ and $x'(m) = i \pmod 2$ if and only if $m = 2^i q$ for $m > 0$. Then $x'$ is just barely not a Toeplitz sequence itself, since its associated constant arithmetic progressions $2^{k-1} + 2^k \N$ do not partition $\N_0$. (Put another way, $x'(0)$ is not part of any constant arithmetic progression). In fact, a sequence in a Toeplitz subshift is not itself Toeplitz if and only if its associated arithmetic progressions do not completely cover $\N_0$.


It was proved in \cite{Kamae-Zamboni-discrete} that simple Toeplitz sequences are pattern Sturmian. However, this proof was slightly incorrect. In \cite{Gjini_Kamae_Bo_Yu-Mei-toeplitz}, this proof is corrected and generalized to a slightly larger class. In fact, they give a complete characterization of pattern Sturmian $1$-hole Toeplitz sequences. (We do not give a definition of the technical condition $D((w?)^\infty, L) \leq 0$ here; see \cite[Page 1076]{Gjini_Kamae_Bo_Yu-Mei-toeplitz} for definitions.)

\begin{theorem}[{\cite[Theorem 2]{Gjini_Kamae_Bo_Yu-Mei-toeplitz}}]\label{onehole}
If $x$ is a $1$-hole Toeplitz sequence, it is pattern Sturmian if and only if it is either simple Toeplitz or it is a shift of the image of a simple Toeplitz sequence under a single constant-length morphism of the form $0 \mapsto w0, 1\mapsto w1$ satisfying the condition $D((w?)^\infty, L) \leq 0$ for $L \subseteq \{0, \ldots, |w|\}$.
\end{theorem}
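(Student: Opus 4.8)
The plan is to pass to the rigid combinatorial skeleton of a $1$-hole Toeplitz sequence and then treat the two implications separately. Fix a period structure $(n_k)$ for $x$ (with $n_0 := 1$), and record the nested \emph{hole chain} $h_1 + n_1\N_0 \supseteq h_2 + n_2\N_0 \supseteq \cdots$ (whose intersection is empty since $x$ is genuinely Toeplitz) together with the \emph{fill words} $b_k \in \{0,1,?\}^{n_k/n_{k-1}}$ recording, in block coordinates, the values assigned to the progressions that first become constant at step $k$; each $b_k$ contains exactly one $?$. The structural observation I would build on is that every $1$-hole Toeplitz sequence factors as $x = \sigma^{j}(\varphi(y))$, where $\varphi\colon a \mapsto wa$ has block length $|w|+1 = n_1$, the word $w$ is the concatenation of the non-$?$ entries of the level-one pattern read cyclically starting just after the hole, $j = n_1 - 1 - h_1$, and $y \in \{0,1\}^{\N_0}$ is the restriction of $x$ to the hole chain $h_1 + n_1\N_0$. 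One checks that $y$ is again $1$-hole Toeplitz, that replacing every coordinate of $\varphi(y)$ coming from $y$ by a $?$ recovers exactly $\sigma^{j}((w?)^\infty)$, and that $p^*_y(n) \le p^*_x(n)$ (a $\tau$-window of $y$ pulls back to an $(n_1\tau)$-window of $x$), so $y$ inherits pattern-Sturmianity from $x$. In these terms, ``$x$ is a shift of the image of a simple Toeplitz sequence under one morphism $a \mapsto wa$'' is the assertion that after finitely many such strippings the residual sequence is simple Toeplitz, the pure simple-Toeplitz case being the instance where no stripping is needed.

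For sufficiency I would argue in two steps. First, recall from \cite{Kamae-Zamboni-discrete}, in the form corrected by \cite{Gjini_Kamae_Bo_Yu-Mei-toeplitz}, that a simple Toeplitz sequence $y$ satisfies $p^*_y(n) \le 2n$: for a window $\tau$ of size $n$, as soon as $n_k$ exceeds the diameter of $\tau$ at most one element of $m + \tau$ can be a level-$k$ hole, so the $\tau$-pattern of $\sigma^m y$ is governed by which single coordinate of $\tau$ the hole chain threads through, and a direct count of the $O(n)$ transitional phases yields the bound; since a Toeplitz sequence with properly increasing period structure is uniformly recurrent and not eventually periodic, \Cref{KZthm} upgrades this to $p^*_y(n) = 2n$. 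Second, for $x = \sigma^{j}(\varphi(y))$ I would decompose an arbitrary window $\tau$ according to how it meets the $\varphi$-blocks: coordinates landing in the length-$|w|$ skeleton part of a block read a portion of $\sigma^{j}((w?)^\infty)$ and contribute only a bounded family of shapes, coordinates on the distinguished last position of a block read a window into the simple sequence $y$, and the number of $\tau$-patterns of $x$ beyond what the $y$-coordinates alone produce is exactly the quantity controlled by the hypothesis $D((w?)^\infty, L) \le 0$ for $L \subseteq \{0,\dots,|w|\}$. Summing the contributions gives $p^*_x(n) \le 2n$, hence $p^*_x(n) = 2n$ by \Cref{KZthm}.

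For necessity, assume $x$ is $1$-hole Toeplitz and pattern Sturmian. The core of the argument is to show that the iterated stripping $x = y_0, y_1, y_2, \ldots$ (each $y_{i+1}$ the hole-chain restriction of $y_i$) reaches a simple Toeplitz sequence in finitely many steps; equivalently, that $x$ has only finitely many ``non-dyadic'' levels. I would prove this by contradiction: a non-dyadic level at scale $n_k$ leaves the hole-chain positions exhibiting at least three distinct local pictures near that scale (two distinct filled values if the relevant $b_k$ is non-uniform, or a filled value together with a hole that refines into further distinct pictures one level down), and a comb-shaped window of step $n_{k-1}$ aligned to the hole chain, enriched by a few auxiliary coordinates placed at finer scales to separate these pictures, can then be shown to realize strictly more than $2n$ patterns, contradicting $p^*_x(n) = 2n$. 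I expect making this estimate precise --- choosing the comb length and the placement of the auxiliary coordinates so that the resulting patterns are genuinely distinct and numerous enough --- to be the main obstacle. Once the stripping terminates at a simple Toeplitz $y$, folding the finitely many stripped levels into one morphism (using that a composition of morphisms $a \mapsto wa$ is again of that form, up to a shift) writes $x = \sigma^{j}(\varphi(y))$; the remaining claim $D((w?)^\infty, L) \le 0$ then follows from the sufficiency analysis run in reverse, since there $p^*_x(n)$ exceeds the value $2n$ contributed by the $y$-coordinates by precisely the interaction quantity measured by $D$, so $p^*_x(n) = 2n$ forces each $D((w?)^\infty, L) \le 0$.
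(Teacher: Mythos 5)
First, a point of context: the paper does not prove this statement at all. \Cref{onehole} is quoted verbatim from the literature (Theorem 2 of the cited Gjini--Kamae et al.\ paper), and the authors explicitly decline even to define the condition $D((w?)^\infty, L) \leq 0$, referring the reader to that reference. So there is no in-paper proof to compare your attempt against; your proposal has to stand on its own as a proof of the external result.

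On its own terms, it has two genuine gaps. The first and most serious is that the entire quantitative content of the theorem lives in the condition $D((w?)^\infty, L) \leq 0$, and your argument never engages with it. In the sufficiency direction you assert that ``the number of $\tau$-patterns of $x$ beyond what the $y$-coordinates alone produce is exactly the quantity controlled by the hypothesis $D((w?)^\infty, L) \le 0$,'' but since you never define $D$ nor prove any counting lemma relating it to how a window $\tau$ splits across the skeleton positions and the hole-chain positions, this is the conclusion restated rather than an argument; the same problem recurs when you claim necessity of $D \leq 0$ ``follows from the sufficiency analysis run in reverse.'' Note that this condition is not decorative: there exist shifts of images of simple Toeplitz sequences under a morphism $0 \mapsto w0$, $1 \mapsto w1$ that are \emph{not} pattern Sturmian, so any correct proof must locate precisely where the $w$-skeleton contributes extra patterns and show that $D \leq 0$ is exactly the dividing line. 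The second gap is the one you flag yourself: in the necessity direction, the claim that infinitely many ``non-dyadic'' levels force some window to carry strictly more than $2n$ patterns is the heart of the structural half of the theorem, and your comb-window construction is left at the level of an expectation (``I expect making this estimate precise \dots to be the main obstacle''), with the preliminary assertion that a non-dyadic level produces ``at least three distinct local pictures'' also unproved. The surrounding scaffolding (the stripping decomposition $x = \sigma^j(\varphi(y))$, the inequality $p^*_y(n) \leq p^*_x(n)$ via the windows $h_1 + n_1\tau$, the closure of morphisms $a \mapsto wa$ under composition) is sound, but with both the $D$-condition analysis and the key combinatorial estimate missing, the proposal is an outline rather than a proof of either implication.
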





We call a sequence $x$ a \emph{nearly simple Toeplitz sequence} if it satisfies the conclusion of \Cref{onehole}, and the closure of the orbit of any such $x$ is a \emph{nearly simple Toeplitz subshift}. Nearly simple Toeplitz sequences are pattern Sturmian by \Cref{onehole}, so nearly simple Toeplitz subshifts (and all sequences in them) are as well. Finally, \Cref{onehole} also implies that a pattern Sturmian $1$-hole Toeplitz subshift $X$ is nearly simple, by just considering any $1$-hole pattern Sturmian sequence in $X$.

We note the following for future reference: if $x$ is a nearly simple Toeplitz sequence, then it is a shift of $\theta y$ for some simple Toeplitz $y$ and substitution $\theta: 0 \mapsto w0, 1 \mapsto w1$ of some length $C$. Therefore, if we define its subsequences
$x^{(i)}(n) := x(i + Cn)$ for $0 \leq i < C$, then all but one of the sequences $x^{(i)}$ is constant, and the nonconstant one is just $y$ itself, therefore simple Toeplitz.

\subsubsection{Almost constant sequences}

The final class of known pattern Sturmian sequences are nonrecurrent, and given in \cite{Kamae-Zamboni-sequence_entropy}. Specifically, they show that if $x$ is the characteristic function of a sequence $(s_k)$ with $s_{k+1} > 2s_k$ for all $k$, then $x$ is pattern Sturmian. The key fact about this sequence is that it is only nonzero on a very sparse set, which motivates the following definition.

For a set $S \subseteq \N_0$, the \emph{upper Banach density} of $S$ is 
\[
    d^*(S) = \lim_{N \to \infty} \max_{M \in \N_0} \frac{|S \cap [M, M + N)|}{N}.
\]

\begin{definition}
    A sequence $x$ is \emph{almost constant} if there exists an infinite set $S$ such that $d^*(S) = 0$ and $x$ is the characteristic function of $S$ or $\N_0 \setminus S$.
\end{definition}

While there are almost constant sequences which are pattern Sturmian, not all almost constant sequences are. For instance, if $S = \N_0 \setminus \{0, 1, 2, 3, 5, 9, 10\}$, then the characteristic function of $S$ is an almost constant sequence which begins with $00001011100$. This sequence contains all possible $\tau$-words for $\tau = \{0, 1, 2\}$, and so is not pattern Sturmian. 

Our Theorems \ref{mainthm:sturmian} and \ref{mainthm:sturmian-nonrecurrent} show that all pattern Sturmian sequences fall into one of these known categories: simple circle rotation coding sequences, sequences in nearly simple Toeplitz subshifts, or almost constant sequences. 

\section{Null subshifts, almost 1-1 extensions, and coding sequences}

Null topological dynamical systems were first considered in \cite{TNT} as those with zero topological sequence entropy for all sequences. In the setting of $\{0,1\}$-subshifts, this has a particularly simple interpretation. 


\begin{theorem}[{\cite[Corollary 2.2]{Huang_Ye-combinatorial_lemmas}}, 
{\cite[Theorem 5.1]{GMtame}}]
A $\{0,1\}$-subshift is null if and only if $\frac{\ln p^*_X(n)}{n} \rightarrow 0$. 
\end{theorem}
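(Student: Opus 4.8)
The plan is to recast the dynamical hypothesis ``$X$ is null'' as a combinatorial statement about occurrences of patterns in $X$, and then prove the two implications by essentially different tools: an elementary covering estimate for one direction, and the cited combinatorial characterization of null systems, together with the Sauer--Shelah lemma, for the other. Throughout, let $\mathcal{P}=\{[0],[1]\}$ be the clopen partition of $X$ into the two one-cylinders, and call a window $\tau\subseteq\N_0$ an \emph{independence set} if $L_X(\tau)=\{0,1\}^{\tau}$, i.e.\ every $0$--$1$ pattern on $\tau$ occurs in $X$.

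For the implication $\frac{\ln p^*_X(n)}{n}\to 0\ \Rightarrow\ X$ null, I would first note that $\mathcal{P}$ is topologically generating: by the Lebesgue-number lemma every finite open cover $\mathcal{V}$ of $X$ is refined by $\bigvee_{i=0}^{k}\sigma^{-i}\mathcal{P}$ once $k$ is large enough that all length-$(k{+}1)$ cylinders have diameter below a Lebesgue number of $\mathcal{V}$. Fixing an arbitrary sequence $S=(s_j)_{j\ge1}$, the cover $\bigvee_{j=1}^{m}\sigma^{-s_j}\bigl(\bigvee_{i=0}^{k}\sigma^{-i}\mathcal{P}\bigr)=\bigvee_{j,i}\sigma^{-(s_j+i)}\mathcal{P}$ is a clopen partition, so the least cardinality of a subcover equals its number of atoms, which is $|L_X(\tau)|$ for the window $\tau=\{s_j+i:1\le j\le m,\ 0\le i\le k\}$; since $|\tau|\le m(k{+}1)$ and $p^*_X$ is nondecreasing, this is at most $p^*_X\bigl(m(k{+}1)\bigr)$. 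Dividing by $m$, letting $m\to\infty$, and using refinement-monotonicity of sequence entropy gives $h^{\mathrm{top}}_S(X,\mathcal{V})\le(k{+}1)\lim_{n}\tfrac{\ln p^*_X(n)}{n}=0$; as $\mathcal{V}$ and $S$ were arbitrary, $X$ is null.

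For the converse I would argue contrapositively. A quick check shows $p^*_X$ is submultiplicative: splitting a size-$(m{+}n)$ window $\tau$ as $\tau'\sqcup\tau''$ with $|\tau'|=m$ makes the restriction map $L_X(\tau)\hookrightarrow L_X(\tau')\times L_X(\tau'')$ injective, so $p^*_X(m{+}n)\le p^*_X(m)\,p^*_X(n)$; by Fekete, $c^*:=\lim_n\tfrac{\ln p^*_X(n)}{n}=\inf_n\tfrac{\ln p^*_X(n)}{n}$ exists, and $\frac{\ln p^*_X(n)}{n}\not\to0$ means $c^*>0$, so $p^*_X(n)\ge e^{c^*n}$ grows faster than any polynomial. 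Now view $X$ as the set system $\{\,\{t:x(t)=1\}:x\in X\,\}$; its trace on a window $\tau$ has size $|L_X(\tau)|$, and $\tau$ is shattered exactly when it is an independence set. If every independence set had size below some fixed $d$, Sauer--Shelah would give $p^*_X(n)\le\sum_{i<d}\binom{n}{i}=O(n^{d-1})$, contradicting the superpolynomial growth; hence $X$ has independence sets of every finite size. By the cited combinatorial characterization of null systems --- a topological dynamical system is null iff it has no nondiagonal sequence-entropy pair, and for a $\{0,1\}$-subshift such a pair exists iff $X$ has independence sets of all sizes for $\mathcal{P}$ --- it follows that $X$ is not null.

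The hard part is precisely this last input. Passing from ``arbitrarily long finite independence sets'' to ``a single sequence $S$ along which $X$ has positive topological sequence entropy'' is not a matter of translating windows: the finite independence sets need not be nested or concentrated, and the witnessing $S$ need not itself be an independence set --- one only needs $|L_X(\{s_1,\dots,s_m\})|$ to be exponentially large for infinitely many $m$. Extracting such an $S$ is a Ramsey-type argument, and it is exactly the content of the combinatorial lemmas of Huang--Ye (and, via tameness, of Glasner--Megrelishvili); I would invoke it as a black box. Everything else above --- the covering estimate, submultiplicativity, and Sauer--Shelah --- is elementary.
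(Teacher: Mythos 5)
The paper does not actually prove this statement --- it is quoted verbatim from Huang--Ye and Glasner--Megrelishvili --- so there is no internal proof to compare against; what can be judged is whether your argument is sound and how it relates to the cited sources. Your easy direction is correct and complete: the one-cylinder partition $\mathcal{P}$ refines every finite open cover after joining finitely many shifts, the minimal subcover of the resulting clopen join along $s_1,\dots,s_m$ has cardinality $|L_X(\tau)|\le p^*_X(m(k+1))$, and monotonicity plus the hypothesis kills the sequence entropy for every $S$ and every cover. Your hard direction is also correctly organized: submultiplicativity of $p^*_X$ and Fekete give exponential growth when $\ln p^*_X(n)/n\not\to 0$, and Sauer--Shelah then forces shattered windows (independence sets for $([0],[1])$) of every finite size, since otherwise $p^*_X$ would be polynomially bounded. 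The one caveat --- which you state candidly yourself --- is that the remaining step, from arbitrarily large finite independence sets to a nondiagonal sequence-entropy (IN-) pair and hence non-nullness, is precisely the Ramsey-type core of the results being cited, so your argument is a reduction to, rather than an independent proof of, the combinatorial lemmas of Huang--Ye (equivalently the Kerr--Li independence machinery). Given that the paper itself invokes these references as a black box, this is a reasonable and correctly executed account: it accurately locates where the genuine content lies, and everything you prove around that black box (the covering estimate, submultiplicativity, the Sauer--Shelah step, and the identification of $L_X(\tau)$ with the trace of the set system on $\tau$) is right.
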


In particular, pattern Sturmian subshifts are null. 
In the measurable category, having zero sequence entropy for all sequences was shown by Kushnirenko (\cite{Kushnirenko}) to imply isomorphism to a group rotation. Infinite null subshifts cannot be topologically conjugate to rotations (see \Cref{lem:boundary_non_empty}), but they are extremely close. To say more about this,  we first need a definition.


\begin{definition}
Let $(X, T)$ and $(Y, S)$ be minimal topological dynamical systems. We say $(X, T)$ is an \emph{almost $1$-$1$ extension} of $(Y, S)$ if there exists a surjective continuous map $\phi: X \rightarrow Y$ such that $\phi \circ T = S \circ \phi$ and $\phi$ is injective somewhere, i.e. there exists $y \in Y$ with $|\phi^{-1}(\{y\})| = 1$.
\end{definition}

In fact, for almost $1$-$1$ extensions (between minimal systems), the map $\phi$ is generically injective, i.e. the set of $y \in Y$ with singleton preimages is a dense $G_\delta$ set.

Every minimal topological dynamical system $(X,T)$ has a maximal group rotation $(G, +g_0)$ occurring as a factor, which is called the \emph{maximal equicontinuous factor or MEF}. Since $X$ is compact and $(X,T)$ is minimal, $G$ is compact and the MEF rotation $(G, +g_0)$ is \emph{monothetic}, meaning that $\{ng_0\}_{n \in \N_0}$ is dense in $G$.


We do not need a general treatment of the theory of maximal equicontinuous factors here (see, for example, \cite{Auslander88, MEFref}), but note that the MEF is uniquely determined up to group isomorphism.

\begin{theorem}[{\cite[Theorem 4.3]{Huang_Li_Shao_Ye-null_systems}}, {\cite[Corollary 7.16]{Kerr_Li-independence_Cstar}}]
\label{kerrli}
If $X$ is minimal and null, then $X$ is an almost $1$-$1$ extension of its MEF.
\end{theorem}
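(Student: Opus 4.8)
The plan is to route through the notion of \emph{tameness} of a topological dynamical system. First I would record the combinatorial meaning of nullity: by the independence theory of Huang--Ye and Kerr--Li (\cite{Huang_Ye-combinatorial_lemmas, Kerr_Li-independence_Cstar}), $X$ is null exactly when it has no nondiagonal \emph{IN-pair}, i.e. no pair $x \neq x'$ such that for all neighborhoods $U \ni x$, $U' \ni x'$ one can find arbitrarily long finite sets $S \subseteq \N_0$ with the family $\{\sigma^{-s}U, \sigma^{-s}U'\}_{s \in S}$ independent. On the other hand, $X$ is \emph{tame} exactly when it has no nondiagonal \emph{IT-pair}, defined identically but with an \emph{infinite} independence set $S$. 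Since independence along an infinite set forces independence along arbitrarily long finite subsets, every IT-pair is an IN-pair; hence nullity implies tameness. (Equivalently, the Ellis enveloping semigroup $E(X)$ of a null system is a Rosenthal compactum, consisting of Baire class $1$ functions on $X$.)

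Next I would invoke the structure theorem for tame minimal metrizable systems: every such system is an almost $1$-$1$ extension of a minimal equicontinuous system. This is where the real work lies, and I expect it to be the main obstacle to any self-contained treatment — it rests on the ``smallness'' of the enveloping semigroup together with nontrivial facts about Rosenthal compacta and Ellis-semigroup dynamics, which is precisely why the statement is quoted from \cite{Huang_Li_Shao_Ye-null_systems, Kerr_Li-independence_Cstar} rather than reproved here. Concretely, the mechanism is that a tame enveloping semigroup forces the regionally proximal relation of $X$ to sit inside the proximal relation with small (generically singleton) classes, so that the canonical factor map from $X$ onto its quotient by the regionally proximal relation is injective on a dense $G_\delta$, i.e. almost $1$-$1$.

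It then remains to identify this equicontinuous base $Z$ with the MEF. Since $Z$ is an equicontinuous factor of $X$, minimality of the regionally proximal relation among closed invariant equivalence relations with equicontinuous quotient gives $\mathrm{RP}(X) \subseteq R_Z$, so $Z = X/R_Z$ is a factor of $X/\mathrm{RP}(X)$, which is the MEF; conversely the MEF is a factor of $Z$ by maximality. A pair of factor maps in both directions between minimal equicontinuous systems (monothetic group rotations) composes to a surjective endomorphism, hence an automorphism, so $Z$ is isomorphic to the MEF and the almost $1$-$1$ map $X \to Z$ becomes an almost $1$-$1$ map onto the MEF. An alternative, more hands-on route avoiding tameness would work directly with sequence entropy pairs: show that in a minimal system the regionally proximal relation is generated by sequence entropy pairs, so that their absence (nullity) forces $\mathrm{RP}(X)$ to have generically trivial classes; but this essentially reproduces the argument of \cite{Huang_Li_Shao_Ye-null_systems}.
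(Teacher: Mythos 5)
The paper does not actually prove this statement; it is quoted from the cited references, so there is no in-paper argument to compare against, and your outline is a faithful sketch of how those references establish it (null $\Rightarrow$ no IT-pairs $\Rightarrow$ tame, then the structure theorem for tame minimal metrizable systems, or directly via sequence entropy pairs and $\mathrm{RP}(X)$), correctly deferring the genuinely hard structural step to the same sources the paper cites. One small correction in your identification of the base $Z$ with the MEF: maximality gives that $Z$ is a factor of the MEF, not that the MEF is a factor of $Z$ as you wrote; but that weaker fact already suffices, since $X \to Z$ then factors as $X \to \mathrm{MEF} \to Z$, and a generic singleton fiber of the composite forces a singleton fiber of $X \to \mathrm{MEF}$, so the latter is almost $1$-$1$ (after which $\mathrm{MEF} \to Z$ is an almost $1$-$1$ map of minimal group rotations, hence an isomorphism, recovering $Z \cong \mathrm{MEF}$ if you want it).
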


In fact, in the case where $X$ is a $\{0,1\}$-subshift, such an almost $1$-$1$ extension can be represented explicitly in terms of symbolic coding of the associated rotation, similarly to \Cref{def:coding}.

The following is folklore, and seems to be implictly present in work of Downarowicz and others on semi-cocycles (for example see \cite{cocycles}), but we include it for ease of reference in future results.

\begin{theorem}[see {\cite[Theorem 6.4]{cocycles}}]
\label{coding}
Suppose that $\phi: X \rightarrow Y$ is an almost $1$-$1$ extension, $(X, \sigma)$ is a minimal $\{0, 1\}$-subshift, and $(Y, T)$ is a topological dynamical system. Then there exists a partition
$Y = U_0 \cup U_1 \cup B$ where $U_0, U_1$ are open and nonempty, $B$ is the common boundary of $U_0, U_1$, and there exist $x_0 \in X$ and $y_0 \in Y$ so that $x_0$ comes from coding the orbit of $y_0$ as follows: for all $n \geq 0$, $T^n y_0 \notin B$, and for $i \in \{0, 1\}$, $x_0(n) = i$ if and only if $T^n y_0 \in U_i$.
\end{theorem}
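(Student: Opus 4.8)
## Proof proposal for Theorem~\ref{coding}

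The plan is to exploit the fact that, for an almost $1$-$1$ extension between minimal systems, the injectivity set is a dense $G_\delta$, together with the fact that the two-symbol subshift structure on $X$ records exactly one bit of information at each coordinate. First I would pick a point $y^* \in Y$ with singleton preimage, say $\phi^{-1}(\{y^*\}) = \{x^*\}$. The symbol $x^*(0) \in \{0,1\}$ distinguishes $x^*$ from at least one other point of $X$ (otherwise $X$ would be a single orbit of a fixed point, contradicting that $X$ is an infinite minimal $\{0,1\}$-subshift — or if $X$ is finite it is a single periodic orbit and the statement is a triviality one can dispose of separately). So I would set, for $i \in \{0,1\}$, $C_i = \{x \in X : x(0) = i\}$, which are disjoint clopen subsets of $X$ partitioning $X$, and push them forward: let $U_i^{\circ} = $ the interior of $\phi(C_i)$ inside $Y$, and $B = Y \setminus (U_0^{\circ} \cup U_1^{\circ})$. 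The point of taking interiors is that $\phi(C_0)$ and $\phi(C_1)$ may overlap on the "ambiguous" fibers, but their union is all of $Y$ and their overlap is contained in a closed set with empty interior — this last claim is where the genericity of injectivity gets used.

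The key steps, in order, are: (1) show $U_0^{\circ}$ and $U_1^{\circ}$ are disjoint: if $y$ lies in both interiors, then every preimage $\phi^{-1}(\{y\})$ would have to contain a point with $x(0)=0$ and a point with $x(0)=1$, but more strongly, a whole neighborhood of $y$ has this property, and intersecting with the dense $G_\delta$ of injectivity points gives a point $y'$ whose unique preimage has its $0$th coordinate equal to both $0$ and $1$ — contradiction. (2) Show $U_0^{\circ} \cup U_1^{\circ}$ is dense, equivalently $B$ has empty interior: any nonempty open $V \subseteq Y$ meets the injectivity $G_\delta$, and at such a point $y'$ the unique preimage $x'$ has a well-defined symbol $x'(0) = i$, and by continuity of $\phi$ together with the clopenness of $C_i$, a whole neighborhood of $y'$ maps into $\phi(C_i)$ — but I need this neighborhood to actually land in the interior $U_i^{\circ}$, which follows because that neighborhood is open. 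So $y' \in U_i^{\circ}$, giving $V \cap U_i^{\circ} \neq \varnothing$. Hence $B$ is closed with empty interior and equals the common boundary $\partial U_0^{\circ} = \partial U_1^{\circ}$. (3) Construct the coding point: by minimality of $(Y,T)$ the injectivity points are dense and $T$-invariant as a set, and $B$ is closed nowhere dense; I would like a point $y_0$ whose entire forward orbit avoids $B$. Equivalently $y_0 \notin \bigcup_{n \geq 0} T^{-n}B$. Here I would instead work on the $X$ side: take $y_0 = \phi(x_0)$ where $x_0$ is chosen so that $T^n \phi(x_0) \notin B$ for all $n$; since $x \mapsto x(0)$ is continuous on $X$ and $C_i = \phi^{-1}(U_i^{\circ}) \sqcup (\text{boundary stuff})$, the cleanest route is: a point $x_0 \in X$ has $\sigma^n x_0 \in \overline{C_{i}} \setminus C_{1-i}$ behavior governing whether $\phi(\sigma^n x_0) \in B$, and one shows the set of $x \in X$ all of whose forward iterates avoid $\phi^{-1}(B)$'s bad part is a dense $G_\delta$ in $X$ (Baire category, using that $\phi^{-1}(B)$ has empty interior since $\phi$ is irreducible), hence nonempty; pick $x_0$ there and $y_0 = \phi(x_0)$. (4) Finally, verify the coding identity: for such $x_0$, $x_0(n) = x_0(n) $ is by definition $i$ iff $\sigma^n x_0 \in C_i$; and since $\phi(\sigma^n x_0) = T^n y_0 \notin B$, the point $T^n y_0$ lies in exactly one $U_i^{\circ}$, and $\phi(\sigma^n x_0) \in U_i^{\circ} \iff \sigma^n x_0 \in \phi^{-1}(U_i^{\circ}) \iff \sigma^n x_0 \in C_i$, where the last equivalence holds off the bad set, i.e. precisely for our chosen $x_0$. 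This gives $x_0(n) = i \iff T^n y_0 \in U_i$, as required, after renaming $U_i^{\circ}$ to $U_i$.

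I expect the main obstacle to be step (3), producing the point $y_0$ (equivalently $x_0$) whose \emph{entire forward orbit} misses the boundary set $B$. Density of a single injectivity point is immediate, but avoiding $B$ at every time $n \geq 0$ requires a Baire-category argument, and one must be careful that the relevant "bad set" $\bigcup_{n\geq 0} T^{-n}B$ (or its $\phi$-preimage in $X$) is meager — this uses that $B$ is closed with empty interior and that $\phi$ is an \emph{irreducible} continuous surjection between compact metric spaces (so preimages of nowhere dense sets are nowhere dense), which in turn rests on minimality. A secondary subtlety is the degenerate case where $X$ is finite (a single periodic orbit); there $\phi$ is a bijection, $B = \varnothing$, and the statement is immediate, so it should be handled as a trivial opening remark. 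Everything else — disjointness and density of the $U_i$, and the final verification — is a routine chase using continuity of $\phi$, clopenness of the cylinder sets $C_i$, and genericity of injectivity.
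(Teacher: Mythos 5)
Your partition is in fact the same one the paper constructs: writing $C_i = \{x \in X : x(0)=i\}$, your $U_i^{\circ} = \mathrm{int}(\phi(C_i))$ coincides with the paper's $U_i$ (the complement of $\phi(C_{1-i})$, together with those points of $\phi(C_0)\cap\phi(C_1)$ that are interior to $\phi(C_i)$), and your disjointness and density arguments are correct. The one genuine gap is in steps (3)--(4): you arrange for the forward orbit of $y_0$ to avoid only $B = Y \setminus (U_0^{\circ}\cup U_1^{\circ})$, and that is not enough for the coding identity. The set you must avoid is the full ambiguity set $I := \phi(C_0)\cap\phi(C_1)$, which contains $B$ but is in general strictly larger: a point $y \in I$ possessing a neighborhood contained in $\phi(C_0)$ lies in $U_0^{\circ}$, yet its fiber still meets both $C_0$ and $C_1$. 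If $T^n y_0$ is such a point and $\sigma^n x_0$ happens to be a fiber point lying in $C_1$, then $x_0(n)=1$ while $T^n y_0 \in U_0$, and the biconditional fails. Your own hedges (``$\phi^{-1}(U_i^{\circ})\sqcup(\text{boundary stuff})$'', ``the last equivalence holds off the bad set'') show you sensed this but never pinned down what the bad set actually is. The repair is immediate: $I$ is closed (an intersection of compact images) and nowhere dense (it is contained in the non-injectivity set, which is meager for an almost $1$-$1$ extension), so the very same Baire-category argument produces $y_0$ with $T^n y_0 \notin I$ for all $n\ge 0$; then every fiber over the orbit of $y_0$ lies entirely in a single $C_i$, and the coding identity holds for any choice of $x_0\in\phi^{-1}(y_0)$. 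This is exactly what the paper does.

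A secondary remark on your step (3): whichever side you run the category argument on, you need each $T^{-n}(I)$ (respectively $\sigma^{-n}(\phi^{-1}(I))$) to be nowhere dense, not merely closed; for non-invertible continuous maps this is not automatic, but it holds here because minimal maps are irreducible (no proper closed subset maps onto the whole space), so preimages of nowhere dense sets under $T^n$, $\sigma^n$, and $\phi$ are nowhere dense. You correctly identified irreducibility as the needed ingredient for $\phi$, but the same property is needed for the iterates. Working directly on $Y$ --- pick $y_0$ outside the meager set $\bigcup_{n\ge0}T^{-n}(I)$ and then take any $x_0\in\phi^{-1}(y_0)$ --- is slightly cleaner than your detour through $X$, and is the route the paper takes.
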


\begin{proof}
This is relatively straightforward: simply define $K_i = \phi([i])$ for $i \in \{0,1\}$. Then each $K_i$ is compact and nonempty, their union is $Y$, and their intersection is first category by definition of almost $1$-$1$. Since it is also closed, it is nowhere dense; represent this intersection by $I$. Then define $V_0 = K_1^c$ and $V_1 = K_0^c$, and $\{V_0, V_1, I\}$ forms a partition of $Y$. 

Now, define $I_i = I \cap \partial V_i$ for $i \in \{0,1\}$. Since $I$ was nowhere dense, $I = I_0 \cup I_1$. Define $B = I_0 \cap I_1$ and
$U_i = V_i \cup (I_i \setminus B)$. Then each $U_i$ is open; if $y \in V_i$, then $y$ has a neighborhood in $V_i \subseteq U_i$ since $y$ is open, and if $y \in I_i \setminus B$, then by definition there is a neighborhood $W$ of $y$ containing no point from $V_{1-i}$, meaning that every point of $W$ is neither in $V_{1-i}$ or $I_{1-i}$, and so is in $U_i$. $B$ is still closed and nowhere dense, and every neighborhood of a point in $B$ contains points of both $U_i \subseteq V_i$ by definition, so $B$ is the common boundary of $U_0, U_1$.

Since $I$ was nowhere dense and $T$ is continuous, the union $\bigcup_n T^n I$ is first category, and so there exists $y_0$ in its complement. Then for each $n \geq 0$, $T^n y_0 \in V_0 \cup V_1$, and if we denote by $x_0$ any point in $\phi^{-1}(y_0)$, then by definition
$x_0(n) = i$ if and only if $T^n y_0 \in V_i \subseteq U_i$, completing the proof.
\end{proof}

In the case where $(Y, T)$ is a group rotation, without loss of generality, we can assume that $y_0$ is the identity (by translating the partition by $y_0^{-1}$), and in this case we make the following definition.

\begin{definition}
For a minimal null $\{0,1\}$-subshift $X$ with the maximal equicontinuous factor $Y$, we refer to any partition satisfying \Cref{coding} for $y_0$ equal to the identity as an \emph{MEF partition} associated to $X$.

\end{definition}

We note for future reference that when the MEF is infinite, in fact one can interpret all points of $X$ as coding sequences (as opposed to just the distinguished $x_0$ from \Cref{coding}).

\begin{theorem}\label{gencoding}
    If $X$ is an infinite minimal subshift with MEF $(G, +g_0)$ and MEF partition $\{U_0, U_1, B\}$, 
    then there exists a partition $\{B_0, B_1\}$ of $B$ and $y \in Y$ so that $x$ comes from coding the orbit of $y$ in the following sense:
\[
    \text{ for } i \in \{0, 1\}, \, \, x(n) = i \Longleftrightarrow y + ng_0 \in U_i \cup B_i.
\]
\end{theorem}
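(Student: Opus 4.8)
The goal is to upgrade the single coding sequence $x_0$ of \Cref{coding} to a statement covering every $x \in X$. The plan is to use the almost $1$-$1$ factor map $\phi: X \to G$ together with continuity to transport the partition structure along the orbit of an arbitrary point. First I would fix $x \in X$ and set $y = \phi(x) \in G$. Since $\phi$ intertwines $\sigma$ with $+g_0$, we have $\phi(\sigma^n x) = y + ng_0$ for all $n \geq 0$, so the orbit of $y$ under rotation is the $\phi$-image of the orbit of $x$. The issue is that $y + ng_0$ may land in the boundary set $B$, where the partition $\{U_0, U_1\}$ does not by itself determine the symbol; the content of the theorem is that there is a \emph{consistent} choice $\{B_0, B_1\}$ of how to split $B$ so that the coding reads off $x$ exactly.

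The key observation is that when $y + ng_0 \in B$, the value $x(n)$ is still forced, just not by $U_0, U_1$ alone: it is forced by which ``side'' of $B$ the point $\sigma^n x$ sits on in $X$. Concretely, recall from the proof of \Cref{coding} that $B \subseteq I = K_0 \cap K_1$ where $K_i = \phi([i])$, and that for a generic base point the orbit avoids $B$. For the fixed $x$, define $B_i = \{g \in B : g = \phi(x') \text{ for some } x' \in \overline{\OO(x)} = X \text{ with } x'(0) = i \text{ that is an accumulation point of } \sigma^{n} x \text{ along } n \text{ with } y + ng_0 \to g \text{ in a way consistent with } x(n)=i\}$. Rather than this awkward formulation, the cleaner route: for each $n$ with $y + ng_0 \in B$, simply \emph{declare} $n$ to belong to $B_{x(n)}$; one must then check this is well-defined, i.e. that if $y + ng_0 = y + mg_0$ for $n \neq m$ then $x(n) = x(m)$. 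But $y + ng_0 = y + mg_0$ forces $(n-m)g_0 = 0$, which for an infinite monothetic $G$ with $\{ng_0\}$ dense is impossible unless $n = m$. Hence each element of $\{y + ng_0 : y+ng_0 \in B\}$ is hit at most once, so setting $B_{x(n)} \ni y+ng_0$ for those $n$, and then extending $\{B_0, B_1\}$ to a partition of all of $B$ arbitrarily on the (countably many, or possibly more) remaining points of $B$, produces a well-defined partition. With this choice, for every $n$: if $y + ng_0 \in U_i$ then $x(n) = i$ because $\phi([i])$ is closed and contains $\phi(\sigma^n x)$ while $U_i$ is disjoint from $\phi([1-i])$ up to $B$ — more precisely $U_i \cap \phi([1-i]) = \varnothing$ by construction of the $U_i$ in \Cref{coding}; and if $y + ng_0 \in B$ then $x(n) = i \iff y+ng_0 \in B_i$ by the very definition of $B_i$. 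This gives exactly $x(n) = i \iff y + ng_0 \in U_i \cup B_i$.

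The main obstacle is verifying the implication ``$y + ng_0 \in U_i \implies x(n) = i$'' rigorously, i.e. that the open sets $U_0, U_1$ genuinely separate the cylinders $[0], [1]$ under $\phi$ well enough that no point of $X$ over $U_i$ can carry the wrong symbol. This is where one must unwind the construction $U_i = V_i \cup (I_i \setminus B)$, $V_i = \phi([1-i])^c$ from the proof of \Cref{coding}: a point over $V_i$ cannot lie in $[1-i]$ since $V_i \cap \phi([1-i]) = \varnothing$, and a point over $I_i \setminus B$ lies in a neighborhood meeting no point of $V_{1-i}$, so by minimality/continuity its $\phi$-fiber misses $[1-i]$ as well. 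Once that separation lemma is in hand, the rest is the bookkeeping above: injectivity of $n \mapsto y + ng_0$ on the orbit (from infiniteness of $G$, using \Cref{lem:boundary_non_empty}-type facts that the MEF is infinite exactly when $X$ is infinite), well-definedness of $B_0, B_1$ on orbit points, and an arbitrary extension to the rest of $B$. I would present it in that order: (1) record $\phi(\sigma^n x) = y + ng_0$; (2) the separation lemma for $U_0,U_1$; (3) injectivity of the orbit map into $G$; (4) define $B_i$ on the orbit and extend; (5) assemble the equivalence.
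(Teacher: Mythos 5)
Your overall architecture is close to the paper's: both arguments in effect take $y = \phi(x)$ (the paper obtains $y$ as $\lim n_k g_0$ for shifts $\sigma^{n_k}x_0 \to x$, which equals $\phi(x)$ by continuity of $\phi$), both handle the boundary by assigning $y+ng_0$ to $B_{x(n)}$ and using that an infinite monothetic rotation visits each point at most once, and both extend $\{B_0,B_1\}$ to the rest of $B$ arbitrarily. The difference, and the problem, lies in the step you yourself single out as the main obstacle: showing that $y + ng_0 \in U_i$ forces $x(n) = i$.

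Your proposed justification of that step is incorrect as stated. Recall from the proof of \Cref{coding} that $U_i = V_i \cup (I_i \setminus B)$ with $V_i = \phi([1-i])^c$ and $I_i \subseteq I = \phi([0]) \cap \phi([1])$. On the $V_i$ part your argument is fine. But for $g \in I_i \setminus B$ you claim that the fiber $\phi^{-1}(g)$ misses $[1-i]$; this is false by the very definition of $I$: every $g \in I$ lies in $\phi([1-i])$, so its fiber meets $[1-i]$. Likewise, the assertion that $U_i \cap \phi([1-i]) = \varnothing$ ``by construction'' does not hold by construction --- unwinding the definitions gives $U_i \cap \phi([1-i]) = I_i \setminus B$, and the content you actually need is that this set is empty, i.e.\ that $I_0 = I_1 = B$. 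That is true for minimal almost $1$-$1$ extensions, but it requires an argument: for instance, if $g \in I_0 \setminus I_1$, then $g$ has a neighborhood disjoint from $V_1$, so any $x' \in [1]$ with $\phi(x') = g$ is a limit of shifts $\sigma^{m_k}x_0$ with $m_k g_0 \in V_0$, forcing $x'(0) = 0$, a contradiction. The paper sidesteps this issue entirely: it approximates $x$ by $\sigma^{n_k}x_0$, notes that $x(n) = (\sigma^{n_k}x_0)(n)$ for large $k$, and uses only the openness of $U_i$ to conclude that $(n+n_k)g_0$ eventually lies in $U_i$, hence $x(n) = i$. You should either adopt that approximation argument or explicitly prove $I_i \setminus B = \varnothing$; as written, ``by minimality/continuity its $\phi$-fiber misses $[1-i]$'' is a hand-wave covering a false intermediate claim, so the proof is not complete.
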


\begin{proof}
    By definition of an MEF partition, there exists $x_0 \in X$ which comes from coding the orbit of the identity in the following sense: 
    \[
    x_0(n) = i \Longleftrightarrow ng_0 \in U_i.
    \]
We recall that in particular, no $ng_0$ lies in $B$. Now, consider any $x \in X$. Since $X$ is minimal, there exists a sequence $(n_k)$ so that $\sigma^{n_k} x_0 \rightarrow x$. By compactness and passing to a subsequence, we can assume that $n_{k} g_0$ converges to some limit $y \in Y$. 

Now, by definition of the shift,
\[
(\sigma^{n_k} x_0)(n) = i \Longleftrightarrow (n + n_k) g_0 \in U_i.
\]
Since $\sigma^{n_k} x_0 \rightarrow x$, this means that $x(n) = i$ if and only if $(n + n_k) g_0 \in U_i$ for sufficiently large $k$. Note that $(n + n_k) g_0 \rightarrow n\alpha + y$. If $ng_0 + y$ is in either open set $U_i$, then clearly $x(n) = i$. The only remaining case is when $ng_0 + y \in B$. Since $X$ is infinite, $Y$ is infinite. We know that $g_0$ is a monothetic rotation, and so for each element $b \in B$, there exists at most one $n$ for which $n g_0 + y = b$; if such $n$ exists, then assign 
$b$ to $B_{x(n)}$. Then by definition, for every $n$, $x(n) = i$ if and only if $ng_0 \in (U_i \cup B_i) - y$. For any $b$ which are not equal to any $ng_0 + y$, assign to either $B_0$ or $B_1$ arbitrarily; this completes the proof.

\end{proof}

It is fairly straightforward how recurrent simple circle rotation coding sequences can be viewed in this context; the MEF partition has open sets given by interiors of the coding intervals and the boundary is the set of endpoints. It will be useful for future reference to explicitly define the MEF partition for Toeplitz sequences.

\begin{lemma}\label{toeMEF}
    If $x$ is a Toeplitz sequence with period structure $(n_k)$, then the subshift $X = \overline{\OO(x)}$ is minimal and an almost $1$-$1$ extension of its maximal equicontinuous factor, which is addition by $1$ on the odometer
    \[
    \mathcal{O} = \lim_{\longleftarrow} \faktor{\Z}{n_k \Z}.
    \]
In addition, if $x$ is defined by partition $\N_0 = \bigsqcup_{k,i} (a_{k,i} + n_k \N_0)$ where $x$ is constant on every $a_{k,i} + n_k \N_0$, then $X$ can be associated with the MEF partition where the clopen subset of elements $y \in \mathcal{O}$ with $y(k) = a_{k,i}$ is a subset of $U_j$ if and only if $x = j$ on $a_{k,i} + n_k \N_0$. Then $B$ is the set of all $y \in \mathcal{O}$ where for all $k$, $x$ is not constant on the arithmetic progression $y(k) + n_k \N_0$.
\end{lemma}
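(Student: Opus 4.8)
The plan is to verify the two claims of \Cref{toeMEF} in turn: first that $X$ is minimal with MEF the odometer $\mathcal{O} = \varprojlim \Z/n_k\Z$ under addition by $1$, and second that the described partition is an MEF partition in the sense of \Cref{coding}. For the first part, minimality of $X$ is standard (Toeplitz sequences are uniformly recurrent), so I would recall or cite this. To identify the MEF, I would construct the factor map $\phi \colon X \to \mathcal{O}$ explicitly: since the arithmetic progressions $a_{k,i} + n_k\N_0$ partition $\N_0$, each $n \in \N_0$ determines a coherent sequence of residues $(n \bmod n_k)_k \in \mathcal{O}$, and the map $n \mapsto (n \bmod n_k)_k$ extends by density/continuity to a map $\phi$ on all of $X = \overline{\OO(x)}$ intertwining $\sigma$ with $+1$. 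I would check $\phi$ is well-defined and continuous by using that if two shifts $\sigma^m x, \sigma^{m'} x$ agree on a long prefix then $m \equiv m' \pmod{n_k}$ for large $k$ (this is where the hole structure, i.e. that \emph{all} of $\N_0$ is eventually covered by constant progressions, is used). Then $\phi$ is injective over the point $0 \in \mathcal{O}$: if $\phi(z) = 0$, then $z$ is forced on every progression $0 \bmod n_k$'s complement... more precisely, for $y \in \mathcal{O}$ in the complement of the (meager) set of "holes at every level", $z(n)$ is determined for all $n$, giving $|\phi^{-1}(y)| = 1$; so $\phi$ is almost $1$-$1$, hence (by \Cref{kerrli}, or directly, since an almost $1$-$1$ extension of an equicontinuous system onto which $X$ also maps equicontinuously forces maximality) $\mathcal{O}$ is the MEF.

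For the second part, I would unwind \Cref{coding} in this concrete setting. Define, for each $k$ and each $i$ with $x$ constant $\equiv j$ on $a_{k,i} + n_k\N_0$, the clopen cylinder $C_{k,i} = \{y \in \mathcal{O} : y(k) = a_{k,i}\}$, and set $U_j$ to be the union over all such $(k,i)$ with value $j$ of these cylinders. One must check $U_0, U_1$ are disjoint: this is exactly the consistency of the partition — two nested constant progressions forcing different values would contradict that $x$ is a well-defined sequence (any sufficiently large level $k$ refines both). They are open as unions of clopen sets, and nonempty as long as $x$ is not constant on all of $\N_0$ at some early level (and if it were, one can still arrange nonemptiness, or the degenerate case is handled separately). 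Then $B := \mathcal{O} \setminus (U_0 \cup U_1)$ is the set of $y$ such that for every $k$, $y(k)$ is one of the "hole" residues, i.e. $x$ is not constant on $y(k) + n_k\N_0$ — this matches the claimed description of $B$. I would verify $B$ is closed and nowhere dense (it is an intersection over $k$ of the clopen hole-sets at level $k$, each of relative measure tending to $\prod$ of hole fractions, and nonmeagerness would contradict $\phi$ being almost $1$-$1$), and that $B = \partial U_0 = \partial U_1$ using minimality of the odometer rotation (every cylinder around a point of $B$ meets both $U_0$ and $U_1$ because the orbit of any hole point is dense and visits constant progressions of both values). Finally, taking $y_0 = 0 = \phi$-image of the identity coding, one checks that $x$ itself arises as the coding sequence: $x(n) = i \iff n \bmod n_k = a_{k,i}$ for some $k$ with value $i \iff \phi(\sigma^n x) \cdot \text{(at level } k) \in U_i$, and that $n \cdot 1 \notin B$ for all $n$ since every $n$ lies in \emph{some} constant progression. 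This confirms the partition satisfies \Cref{coding}, so it is an MEF partition.

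The main obstacle I anticipate is the careful bookkeeping around the boundary $B$ and the degenerate cases: showing $B$ is precisely the nowhere-dense intersection of hole-cylinders (rather than something larger), showing $\partial U_0 = \partial U_1 = B$ rather than merely $\partial U_i \subseteq B$, and confirming $U_0, U_1$ are genuinely disjoint and nonempty requires invoking both the consistency of the defining partition of $\N_0$ and the minimality/unique-ergodicity-flavored density of odometer orbits. A secondary subtlety is that \Cref{coding} as stated produces \emph{some} partition and distinguished points; here I am asserting a \emph{specific} natural partition works and that $y_0$ can be taken to be the identity — so I must check directly that the explicit coding of the identity's orbit under $+1$ reproduces $x$, rather than merely appealing to the existence statement. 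All of this is conceptually routine given \Cref{kerrli} and \Cref{coding}, but it is the part where an error would most easily hide, so I would write it out with care.
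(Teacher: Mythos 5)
Your proposal is correct and follows essentially the same route as the paper's proof, which likewise defines the explicit cylinder partition, uses disjointness of the progressions $a_{k,i}+n_k\N_0$ to get well-definedness of $U_0,U_1$, and checks that $x$ itself codes the orbit of $0\in\mathcal{O}$ (the paper cites minimality and the odometer MEF as standard and leaves the boundary verification to the reader, which you carry out in more detail).
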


\begin{proof}
This essentially follows from the definitions. First, define $U_0, U_1, B$ as in the lemma; they are well-defined since the progressions $a_{k,i} + n_k\N_0$ are disjoint, meaning that no $y \in \mathcal{O}$ can have $y(k) = a_{k,i}$ and $y(k') = a_{k',i'}$ for distinct pairs $(k,i) \neq (k', i')$. 

Every $n \in \N_0$ is part of exactly one of the arithmetic progressions $a_{k,i} + n_k \N_0$; say that $x$ has constant value $j$ on this progression. Then $x(n) = j$, and $n \cdot 1 = n \in \mathcal{O}$ has $k$th coordinate $n \pmod{n_k} = a_{k,i}$, so by definition, $n \in U_j$ as an element of $\mathcal{O}$. Therefore, $x_0 = x$ arises from coding the orbit of $y_0 = 0$ as in \Cref{coding}. 

It remains only to check that the claimed $B$ is both the common boundary of $U_0$ and $U_1$ and the complement of their union; this is straightforward and left to the reader.

\end{proof}

We present the following example to illustrate \cref{toeMEF}. 
\begin{example}
   Suppose that $x$ is a sequence with alphabet $\{0,1\}$ defined as follows: for any $n \in \mathbb{N}_0$, if we write $n+1 = 3^r s$ for $s$ not a multiple of $3$, then 
    $x(n) = (s \pmod 3) -1$. Then
    \[
    x = .010011010010011010010011011\ldots
    \]
Then we can see that $x$ is a nonsimple 1-hole Toeplitz with $n_k = 3^k$, $a_{k,1} = 3^{k-1} - 1$, $a_{k,2} = 2 \cdot 3^{k-1} -  1$, and associated partition 
$\mathbb{N}_0 = \bigcup_{k \in \N} (3^{k-1} - 1 + 3^k \N_0) \cup (2 \cdot 3^{k-1} - 1 + 3^k \N_0)$. The sequence $x$ is constant of all $0$s on each $3^{k-1} - 1 + 3^k \N_0$ and all $1$s on each 
$2 \cdot 3^{k-1} - 1 + 3^k \N_0$. Therefore, by \Cref{toeMEF}, the orbit closure $X$ of $x$ has MEF the odometer $\displaystyle \mathcal{O} = \lim_{\longleftarrow} \faktor{\Z}{3^k \Z}$. The associated MEF partition is $\{U_0, U_1, B\}$ where $U_0$ consists of all $y \in \mathcal{O}$ beginning with a string of $-1$ followed by $y(k) = 3^{k-1} - 1$, $U_1$ consists of all $y \in \mathcal{O}$ beginning with a string of $-1$ followed by $y(k) = 2 \cdot 3^{k-1} - 1$, and $B = \{(-1, -1, -1, \ldots)\}$.
\end{example}

We finish with a simple corollary of \Cref{toeMEF}.

\begin{corollary}\label{mhole}
If $X$ is a Toeplitz subshift, then it is an $m$-hole Toeplitz subshift if and only if there exists an MEF partition for $X$ which has $|B| = m$.
\end{corollary}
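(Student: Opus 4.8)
## Proof Proposal for Corollary \ref{mhole}

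The plan is to exploit the explicit description of the MEF partition for a Toeplitz sequence given in \Cref{toeMEF}, reading off the size of $B$ directly from the hole structure. The corollary should follow from two observations: first, that for the \emph{canonical} MEF partition attached to a given period structure and partition of $\N_0$, the cardinality of $B$ equals $\lim_k (\text{number of holes at step } k)$; and second, that the size of $B$ is an invariant of the subshift $X$ up to the choice of MEF partition, or at least that if \emph{some} MEF partition has $|B| = m$ then $X$ admits a Toeplitz sequence with $m$ holes at every step. I would prove each direction of the biconditional separately.

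For the forward direction, suppose $X$ is an $m$-hole Toeplitz subshift, so it is the orbit closure of some Toeplitz sequence $x$ with $m$ holes at every step $k$, associated to a partition $\N_0 = \bigsqcup_{k,i}(a_{k,i} + n_k \N_0)$. By \Cref{toeMEF}, $X$ carries the MEF partition $\{U_0, U_1, B\}$ in which $B$ is the set of $y \in \mathcal{O}$ such that for every $k$, the progression $y(k) + n_k\N_0$ is not one of the constant progressions of $x$ — equivalently, $y(k)$ avoids all $a_{k,i}$ for every $k$. The number of residues mod $n_k$ not covered by constant progressions at step $k$ is exactly the number of holes at step $k$, namely $m$; and since $n_k \mid n_{k+1}$ and the partition refines, these residue sets form a nested (under the inverse-limit projections) sequence of sets each of size $m$. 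Hence $B$, as the inverse limit of these size-$m$ sets, has exactly $m$ elements. This gives an MEF partition with $|B| = m$.

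For the reverse direction, suppose $\{U_0, U_1, B\}$ is an MEF partition for $X$ with $|B| = m$. Since $X$ is a Toeplitz subshift its MEF is an odometer $\mathcal{O} = \varprojlim \Z/n_k\Z$, and by \Cref{coding} (with $y_0$ the identity) there is $x_0 \in X$ obtained by coding the orbit of $0$: $x_0(n) = i \iff ng_0 \in U_i$, with $ng_0 \notin B$ for all $n$. I claim $x_0$ is a Toeplitz sequence with $m$ holes at every step. For each $n$, since $ng_0 \notin B$ and $U_0,U_1$ are open, the point $ng_0$ lies in some $U_i$ together with a clopen neighborhood; because the clopen cylinders $[a_1,\dots,a_k] = \{y : y(j) = a_j, j \le k\}$ form a basis for $\mathcal{O}$, for each $n$ there is a least level $k(n)$ with the cylinder through $ng_0$ at level $k(n)$ contained in $U_i$. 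This means $x_0$ is constant on $n + n_{k(n)}\N_0$, giving the Toeplitz partition; the residues at level $k$ on which no such cylinder is yet contained in $U_0 \cup U_1$ are precisely those whose level-$k$ cylinder still meets $B$, and since $|B| = m$ and each level-$k$ cylinder meets $B$ in at most — and, by surjectivity of the projection $B \to \Z/n_k\Z$ being what needs to be checked, exactly — the right count, there are exactly $m$ holes at step $k$ for all large $k$ (and by passing to a subsequence of $(n_k)$ if necessary, at every step). Then $X = \overline{\OO(x_0)}$ is an $m$-hole Toeplitz subshift by definition.

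The main obstacle I anticipate is the bookkeeping in the reverse direction: one must verify that the number of level-$k$ residues whose cylinder meets $B$ is genuinely $m$ and not merely $\le m$, i.e. that distinct points of $B$ project to distinct residues mod $n_k$ for $k$ large. This is where monotheticity of the odometer rotation and the fact that $B$ is finite come in — finitely many distinct points of an inverse limit are separated at some finite level, so for $k$ beyond that level the projection $B \to \Z/n_k\Z$ is injective and the hole count stabilizes at exactly $m = |B|$. A secondary subtlety is that the ``Toeplitz sequence with $m$ holes at every step'' requires the count at \emph{every} $k$, not just large $k$; this is handled by the standard move of replacing $(n_k)$ with a cofinal subsequence, which changes neither the odometer nor the subshift. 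Modulo these points, both directions reduce to the translation between the combinatorics of holes and the topology of $B$ furnished by \Cref{toeMEF}.
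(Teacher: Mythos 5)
Your proof follows the same route as the paper's: the forward direction reads $|B|$ off the explicit partition of \Cref{toeMEF}, and the reverse direction codes the orbit of $0$ and counts nonconstant residue classes. However, both directions of your argument hinge on an identification that you assert but do not justify, and which in fact fails: that the set $B_k$ of residues $i \bmod n_k$ whose cylinder $[i]_k$ meets both $U_0$ and $U_1$ satisfies $|B_k| = |B|$ for large $k$. You correctly handle one half of this (distinct points of $B$ separate at a finite level, so $|\pi_k(B)| = |B|$ eventually, and every residue in $\pi_k(B)$ is a hole), but the containment $B_k \subseteq \pi_k(B)$ is the real issue. You write that the nonconstant residues at level $k$ are ``precisely those whose level-$k$ cylinder still meets $B$,'' but in a totally disconnected group a clopen cylinder $C$ can meet both $U_0$ and $U_1$ while being disjoint from their common boundary: $C \cap U_0$ and $C \cap U_1$ are then complementary nonempty clopen subsets of $C$, which is perfectly consistent. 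Such a $C$ is a genuine hole at level $k$ that contributes nothing to $B$. The same issue undermines your forward direction: the hole sets form an inverse system of $m$-element sets, but the bonding maps need not be surjective (a hole at step $k$ can split entirely into monochromatic sub-cylinders at step $k+1$ and so have no successor among the holes at step $k+1$), and an inverse limit of nested $m$-element sets with non-surjective bonding maps can have fewer than $m$ elements.

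This is not merely a presentational gap. On the dyadic odometer $\mathcal{O} = \varprojlim \faktor{\Z}{2^k\Z}$, let $b = (2^k-1)_k$, write $\mathcal{O}\setminus\{b\}$ as the disjoint union of the clopen sets $A_k = [\,b \bmod 2^k\,]_k \setminus [\,b \bmod 2^{k+1}\,]_{k+1}$ (each a single level-$(k+1)$ cylinder), and split each $A_k$ into two clopen halves, assigning one to $U_0$ and one to $U_1$. Then $B = \{b\}$ is the common boundary, so $|B|=1$, yet at each level $k \geq 2$ there are exactly two nonconstant residue classes mod $2^k$: the ``persistent'' one through $b$ and the ``transient'' one given by $A_{k-1}$, which dies at level $k+1$ just as a new one is born. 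The resulting coding of the orbit of $0$ is a $2$-hole Toeplitz sequence, and no Toeplitz sequence in its orbit closure has fewer than two holes per step. So the step ``the number of holes at step $k$ equals $|B|$ for large $k$'' is precisely where your argument (in both directions) breaks, and it cannot be repaired without either an extra hypothesis or a modified notion of hole, e.g.\ counting only those residues that persist as holes at all subsequent levels (equivalently, whose cylinders actually meet $B$). I note that the paper's own proof relies on the same unproved identification, so this is not a defect you could have avoided by hewing closer to the intended argument.
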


\begin{proof}
    If $X = \overline{\OO(x)}$ for some $m$-hole Toeplitz sequence $x$, then for each $k$ there are $m$ residue classes $\bmod$  $n_k$ of $x$ which are not equal to some $a_{k,i}$, and taking limits yields exactly $k$ elements of $B$ as described in \Cref{toeMEF}. 

    Conversely, suppose that a Toeplitz subshift $X$ has MEF an odometer 
    $\mathcal{O} = \varprojlim \faktor{\Z}{n_k\Z}$ and an MEF partition with $|B| = m$. For each $k$, define the set $B_k$ of $0 \leq i < n_k$ so that $U_0$, $U_1$ each contain elements with $k$th coordinate $i$. Then, by definition, $B$ is the set of limits of elements of $B_k$ as $k \rightarrow \infty$, and so for sufficiently large $k$, $|B_k| = m$. Then, coding the orbit of $0$ by the partition $\{U_0, U_1, B\}$ yields $x_0 \in X$ which is $m$-hole Toeplitz by definition; nonconstant residue classes of $x$ modulo $n_k$ correspond to elements of $B_k$, so for large enough $k$ there are exactly $m$ of them. Since $X$ is minimal, $X = \overline{\OO(x_0)}$, completing the proof.
\end{proof}

\section{Non-superlinear maximal pattern complexity: minimal case}

\subsection{Connection between maximal pattern complexity and MEF}

Our main observation is the size of the boundary for an associated MEF partition directly influences the maximal pattern complexity.


\begin{theorem}\label{main}
If $X$ is a minimal null subshift with MEF partition $\{U_0, U_1, B\}$ with $|B| \geq k$, then 
\[
    \inf_{n \in \N} (p^*_X(n) - kn) > -\infty.
\]
\end{theorem}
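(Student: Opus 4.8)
The plan is to bound $p^*_X(n)$ from below by reinterpreting it, for a carefully chosen window, as a count of cells in a partition of the MEF group, and then to engineer the window so that each extra coordinate forces $k$ new cells. We may assume $k\ge 1$, so $|B|\ge 1$ forces $G$ infinite and $g_0$ of infinite order. Fix distinct $b_1,\dots,b_k\in B$ and let $x_0\in X$ be the coding sequence of \Cref{coding}, so $x_0(p)=i\iff pg_0\in U_i$, with $pg_0\notin B$ for all $p$. Since $X$ is minimal, $L_X(\tau)=L_{x_0}(\tau)$ for every window $\tau$ (if $w=(\sigma^t x')(\tau)$ with $x'\in X$, then $\sigma^t x'\in\overline{\OO(x_0)}$ is a limit of shifts of $x_0$, and $\A^\tau$ is discrete). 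Reading the $\tau$-word of $x_0$ at position $t$ just records which cell of the refinement $\mathcal P_\tau:=\bigvee_{s\in\tau}\{U_0-sg_0,\ U_1-sg_0,\ B-sg_0\}$ contains $tg_0$, an open cell since $tg_0$ misses every $B-sg_0$; as $\{pg_0\}$ is dense, $|L_{x_0}(\tau)|$ equals the number of nonempty open cells of $\mathcal P_\tau$. So the goal reduces to producing, for each $n$, a window $\tau$ with $|\tau|=n$ and at least $kn-C$ nonempty open cells, with $C$ independent of $n$.

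To count cells I would build $\tau=\{s_1<\dots<s_n\}$ and add the ``cuts'' $\{U_0-s_ig_0,U_1-s_ig_0,B-s_ig_0\}$ one at a time; adding the $(i{+}1)$st cut increases the number of nonempty open cells by the number $c_{i+1}$ of current cells meeting both $U_0-s_{i+1}g_0$ and $U_1-s_{i+1}g_0$ (every open cell meets at least one, being open, hence not contained in the nowhere dense $B-s_{i+1}g_0$). The crucial observation is that each $b_l-s_{i+1}g_0$ lies in $\partial(U_0-s_{i+1}g_0)\cap\partial(U_1-s_{i+1}g_0)$, so the open cell containing it is split by the new cut; hence $c_{i+1}\ge k$ as soon as the $k$ points $b_1-s_{i+1}g_0,\dots,b_k-s_{i+1}g_0$ lie in $k$ \emph{distinct} open cells of the current refinement. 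The whole argument then reduces to choosing $\tau$ so that this happens for all but boundedly many $i$, since then $\mathcal P_\tau$ has $\ge 1+k(n-O(1))$ cells.

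To force the marked points into distinct cells I would use a separation lemma: for $b\ne b'$ in $G$, writing $c=b'-b$, the open set $A:=(U_0\cap(U_1-c))\cup(U_1\cap(U_0-c))$ is nonempty — if it were empty, a short Haar-measure argument (together with $\mathrm{int}\,\overline{U_0}=U_0$) gives $U_0+c=U_0$, forcing $c$ into the stabilizer $\{g:U_0+g=U_0\}$, which is trivial because $G$ is the genuine MEF (a nonzero stabilizing element would give two distinct-looking points of $X$ with the same image under the factor map $\phi\colon X\to G$, contradicting single-valuedness of $\phi$). Since the rotation on $G$ is minimal, $\mathrm{Sep}(b,b'):=\{e:b+eg_0\in A\}$ — the set of $e$ at which $b+eg_0$ and $b'+eg_0$ lie in different elements of $\{U_0,U_1\}$ — is syndetic. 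Now set $M_0=1+\max\{|d|:b_l-b_{l'}=dg_0\text{ for some }l\ne l'\}$, let $g^*$ be a common gap bound for the finitely many syndetic sets $\mathrm{Sep}(b_l,b_{l'})$, and take $\tau=\{0,1,\dots,g^*-1\}\cup\{s_{g^*+1}<\dots<s_n\}$ with successive gaps in the spread-out part exceeding $M_0$. Adding the first $g^*$ cuts contributes $c_i\ge 0$; for a later cut $s_{i+1}$, the spacing keeps each $b_l-s_{i+1}g_0$ off all earlier walls, and for each pair $l\ne l'$ one of the $g^*$ consecutive integers $s_{i+1}-j$ ($0\le j<g^*$) lies in $\mathrm{Sep}(b_l,b_{l'})$, so the already-placed cut at position $j$ separates $b_l-s_{i+1}g_0$ from $b_{l'}-s_{i+1}g_0$. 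Hence $c_{i+1}\ge k$ for all $i\ge g^*$, giving $p^*_X(n)\ge kn-(kg^*-1)$ and thus $\inf_n(p^*_X(n)-kn)\ge-(kg^*-1)>-\infty$.

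The step I expect to be the main obstacle is exactly this coordination: a single new coordinate only ever splits the at most $k$ cells into which $b_1-sg_0,\dots,b_k-sg_0$ fall, so getting $k$ new cells per step requires that the coordinates already present have dynamically separated all $\binom{k}{2}$ pairs $b_l,b_{l'}$ simultaneously — the separation lemma together with front-loading a block of $g^*$ consecutive integers into $\tau$ is what makes this happen. A secondary technical point arises when $|B|=\infty$: then the shift-values forbidden by the spacing condition need not form a finite set, and one must check they have density zero, which follows from $B$ being Haar-null (the regime relevant to the paper); I would note this and, should $B$ fail to be Haar-null, argue separately that $p^*_X$ is then superlinear so that the desired inequality is automatic.
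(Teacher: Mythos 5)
Your overall skeleton matches the paper's: reduce $|L_X(\tau)|$ to counting nonempty open cells of $\bigvee_{s\in\tau}\{U_0-sg_0,U_1-sg_0,B-sg_0\}$, and use the fact that each marked boundary point forces the cell containing it to split, so that each well-chosen new coordinate adds $k$ words. Where you differ is the mechanism guaranteeing that $b_1-sg_0,\dots,b_k-sg_0$ lie in $k$ \emph{distinct} open cells, and that mechanism has genuine gaps as written. First, your separation lemma rests on a Haar-measure argument: from $A=\varnothing$ you can only conclude $U_0\subseteq (U_0-c)\cup(B-c)$, and passing from this to $U_0+c=U_0$ via regular open sets requires $\mu(B)=0$, which is not a hypothesis of the theorem. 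The same hypothesis reappears in the wall-avoidance step: controlling $\{e: b_l+eg_0\in B\}$ by spacing or by a density-zero argument needs $B$ finite or Haar-null (and even in the finite case your $M_0$ is too small, since the walls involve all of $B$, not just the chosen $b_l$'s). Your fallback --- ``if $B$ is not Haar-null then $p^*_X$ is superlinear'' --- is asserted without proof; it is a nontrivial claim (essentially one needs null $\Rightarrow$ tame $\Rightarrow$ the MEF map is one-to-one off a Haar-null set), and deducing it from finiteness of $B$ would be circular, since that finiteness is a corollary of the very theorem being proved. Second, your justification that the stabilizer $\{g: U_0+g=U_0\}$ is trivial is not a valid argument as stated: two distinct points of $X$ with the same image under $\phi$ contradict nothing ($\phi$ is typically far from injective). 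The fact itself is true, but the correct reason is in the other direction: if $U_i+c=U_i$ for all $i$, then a point $y$ whose orbit avoids $B$ and the point $y+c$ have identical coding sequences, hence (by minimality and continuity of $\phi$, as in \Cref{coding} and \Cref{gencoding}) a single point $x\in X$ would have to satisfy $\phi(x)=y$ and $\phi(x)=y+c$, forcing $c=0$.

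For comparison, the paper's proof avoids all measure considerations. It uses that, because $\phi$ is a well-defined factor map, the partitions $\xi_\tau$ separate points of $G$, so one can first choose a window $\tau_0$ all of whose nonempty cells have diameter smaller than the minimal separation of $b_1,\dots,b_k$; distinctness of the cells containing the $b_i-mg_0$ is then automatic forever after. The new coordinate $m$ is chosen by a Baire-category argument: $\bigcup_i(C_{\tau_n}-b_i)$ is closed and nowhere dense, and the orbit of $g_0$ is dense, so there are infinitely many admissible $m$; this works for an arbitrary closed boundary $B$, finite or not, of positive measure or not. Your argument, suitably repaired (quoting the tameness/measure-isomorphism results to get $\mu(B)=0$, correcting the stabilizer argument, and enlarging $M_0$ or choosing the $s_i$ adaptively), would yield the theorem, but as it stands the non-Haar-null branch and the stabilizer step are unproven.
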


\begin{proof}
Suppose that $\phi: X \rightarrow G$ is the almost $1$-$1$ extension of the MEF $(G, +g_0)$ guaranteed by \Cref{kerrli} and $U_0, U_1, B, x_0$ are as in 
\Cref{coding}; recall that $y_0$ is the identity by definition of MEF partition. For some $k \in \N$, define distinct $b_1, \ldots, b_k \in B$ and $\epsilon > 0$ so that the $\epsilon$-balls around $b_i$ are pairwise disjoint. 

For any finite $\tau \subseteq \N_0$, define the partition $\xi_{\tau}$ of $G$ into the (possibly empty) open subsets $U_{w,{\tau}} = \bigcap_{s \in {\tau}} (U_{w(s)} - sg_0)$ for each $w \in \{0,1\}^{\tau}$ and the complement $C_{\tau} = \bigcup_{s \in {\tau}} (B - sg_0)$. The significance is that
$ig_0 \in U_{w,\tau} \Longrightarrow x_0(i + \tau) = w$. Since $g_0$ is a monothetic rotation and $U_{w,\tau}$ is open, this means that $U_{w,{\tau}} \neq \varnothing \Longrightarrow w \in L_X({\tau})$. 

Since $\phi$ is a function, the partitions $\xi_{\tau}$ separate points in $y$, and so there exists ${\tau}$ so that
every nonempty $U_{w,{\tau}}$ for ${\tau}$ has diameter less than $\epsilon$. We now define sets ${\tau}_n$, with $|{\tau}_n| = |{\tau}| + n$, ${\tau}_0 = {\tau}$, and 
${\tau}_0 \subseteq {\tau}_1 \subseteq {\tau}_2 \cdots$, so that $|L_X({\tau}_{n+1})| \geq |L_X({\tau}_n)| + k$ for all $n \geq 0$, which completes the proof.

Suppose that ${\tau}_n \supseteq {\tau}_0$ has been defined, so that in particular the nonempty sets $U_{w,{\tau}_n}$ have diameter less than 
$\epsilon$. Each set $C_{{\tau}_n} - b_i$ is nowhere dense, so the union $\bigcup_{i=1}^k (C_{{\tau}_n} - b_i)$ is nowhere dense. Since the set $\{-mg_0 \ : m \in \N\}$ is dense, there must exist infinitely many $m$ so that $-mg_0$ is not in $\bigcup_{i=1}^k (C_{{\tau}_n} - b_i)$, which implies that $C_{{\tau}_n} + mg_0$ contains no $b_i$. Choose such an $m$ greater than $\max {\tau}_n$, and define ${\tau}_{n+1} = {\tau}_n \cup \{m\}$. 

Since $C_{{\tau}_n} + mg_0$ contains no $b_i$, each $b_i - mg_0$ is inside some $U_{w_i,{\tau}_n}$, and since each has diameter less than $\epsilon$, the $w_i$ are distinct. Then, by definition of boundary points, each $U_{w_i, {\tau}_n}$ contains points of both $U_0 - mg_0$ and $U_1 - mg_0$, meaning that the intersections $U_{w_i a,{\tau}_{n+1}} = \left(\bigcap_{s \in {\tau}_n} (U_{w_i(s)} - sg_0)\right) \cap (U_a - mg_0)$ are both nonempty, and so the concatenations $w_i 0, w_i 1$ are both in $L_X({\tau}_{n+1})$. Since every word in $L_X({\tau}_n)$ has at least one extension to ${\tau}_{n+1}$ and $k$ words have two extensions, $|L_X({\tau}_{n+1})| \geq |L_X({\tau}_n)| + k$, completing the proof.

\end{proof}

We now have the following immediate corollary.

\begin{corollary}\label{grpcor}
If $X$ is a minimal subshift with non-superlinear maximal pattern complexity, then the boundary set $B$ is finite for any associated MEF partition.
\end{corollary}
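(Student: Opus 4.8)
The plan is to read this off directly from \Cref{main} by an asymptotic (contrapositive) argument. First note that the statement is only non-vacuous when $X$ actually admits an associated MEF partition, and by the definition of ``MEF partition'' this already forces $X$ to be minimal \emph{and null}; in particular \Cref{main} is available for $X$. So fix an associated MEF partition $\{U_0,U_1,B\}$ and suppose, toward a contradiction, that $B$ is infinite.

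Then $|B|\geq k$ for every $k\in\N$, so \Cref{main} produces, for each such $k$, a constant $c_k$ with $p^*_X(n)\geq kn-c_k$ for all $n\in\N$. Dividing by $n$ and taking $\liminf$ gives $\liminf_{n\to\infty} p^*_X(n)/n\geq k$; since $k$ was arbitrary, $\liminf_{n\to\infty} p^*_X(n)/n=\infty$. Because $X$ is minimal, every point of $X$ has the same $\tau$-language for each window $\tau$, hence $p^*_X=p^*_x$ for any $x\in X$; so $\liminf_{n\to\infty} p^*_x(n)/n=\infty$ for each $x\in X$, contradicting the hypothesis that $X$ has non-superlinear maximal pattern complexity. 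Therefore $B$ is finite, and since the MEF partition was arbitrary this holds for any associated MEF partition.

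I do not expect any real obstacle here: all of the content is already packaged inside \Cref{main}, and the corollary is merely its restatement in the language of growth rates. The only two points that deserve an explicit word are (i) that possessing an MEF partition already entails nullness, so \Cref{main} may legitimately be invoked, and (ii) the identification $p^*_X=p^*_x$ for minimal $X$, which is what lets us pass between the subshift and a generating sequence $x$ when interpreting ``non-superlinear maximal pattern complexity''.
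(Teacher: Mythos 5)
Your proof is correct and is exactly the argument the paper has in mind (it states the corollary as "immediate" from \Cref{main} without writing out the contrapositive). The two clarifying points you flag — that possessing an MEF partition presupposes nullness, and that $p^*_X = p^*_x$ for minimal $X$ — are both valid and worth the explicit word.
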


\subsection{Possible MEF for a minimal subshift of non-superlinear maximal pattern complexity}

The goal of this subsection is to prove \cref{thm:possible_MEF_nonsuperlinear} which states that an  infinite minimal subshift with non-superlinear maximal pattern complexity must have MEF either the product of a circle with a finite cyclic group or an odometer. 

We start with the following nice consequence of the Peter-Weyl theorem \cite{Peter_Weyl-compact_group}:
\begin{lemma}[\cite{Peter_Weyl-compact_group}]\label{thm:peter-weyl}
Every compact Hausdorff topological group is an inverse limit of Lie groups.
\end{lemma}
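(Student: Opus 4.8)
The plan is to deduce this from the Peter--Weyl theorem together with Cartan's closed-subgroup theorem, both of which I will treat as black boxes (they are the content of \cite{Peter_Weyl-compact_group} and standard references); the remaining work is purely inverse-limit bookkeeping. Let $G$ be a compact Hausdorff group. The precise input I would extract from Peter--Weyl is that the continuous finite-dimensional unitary representations of $G$ separate points: for every $g \in G$ with $g \neq e$ there are some $n$ and a continuous homomorphism $\rho \colon G \to U(n)$ with $\rho(g) \neq I$.

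First I would introduce the family $\mathcal{N}$ of all closed normal subgroups $N$ of $G$ for which the quotient topological group $G/N$ is a (compact) Lie group, and verify three things. (i) $\mathcal{N}$ is nonempty and $\bigcap_{N \in \mathcal{N}} N = \{e\}$: given a continuous $\rho \colon G \to U(n)$, the induced injective continuous homomorphism $G/\ker\rho \to U(n)$ is a homeomorphism onto its image (a continuous bijection from a compact space to a Hausdorff space), which is a compact, hence closed, subgroup of the Lie group $U(n)$; by Cartan's theorem a closed subgroup of a Lie group is a Lie group, so $\ker\rho \in \mathcal{N}$, and point separation gives $\bigcap_{\rho} \ker\rho = \{e\}$. (ii) $\mathcal{N}$ is closed under finite intersection: if $N_1, N_2 \in \mathcal{N}$, then $G/(N_1 \cap N_2)$ embeds as a closed subgroup of the Lie group $G/N_1 \times G/N_2$, hence is itself a Lie group. (iii) Consequently, ordering $\mathcal{N}$ by reverse inclusion makes it a directed set.

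Next, for $N \supseteq N'$ in $\mathcal{N}$ there is a canonical continuous surjection $G/N' \to G/N$, and these make $(G/N)_{N \in \mathcal{N}}$ an inverse system of Lie groups; the quotient maps $G \to G/N$ assemble into a continuous homomorphism $\Phi \colon G \to \varprojlim_{N \in \mathcal{N}} G/N$. I would then check $\Phi$ is an isomorphism of topological groups. Injectivity is exactly $\bigcap_{N \in \mathcal{N}} N = \{e\}$. For surjectivity, $\Phi(G)$ is compact and the inverse limit is Hausdorff, so $\Phi(G)$ is closed; it is also dense, since any basic open subset of $\varprojlim_{N\in\mathcal{N}} G/N$ restricts only finitely many coordinates $G/N_1, \dots, G/N_k$, hence only the single coordinate indexed by $N := N_1 \cap \cdots \cap N_k \in \mathcal{N}$, and surjectivity of $G \to G/N$ lets us match that coordinate. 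Being closed and dense, $\Phi(G)$ is everything. Finally, a continuous bijective homomorphism from a compact group to a Hausdorff group is automatically a homeomorphism, so $G \cong \varprojlim_{N \in \mathcal{N}} G/N$ exhibits $G$ as an inverse limit of Lie groups.

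The only genuine content is the two imported results (point separation from Peter--Weyl, and Cartan's closed-subgroup theorem); granting those, the main obstacle is the mild point-set juggling in the surjectivity step, where one combines compactness of $G$ (to get closedness of $\Phi(G)$) with the finite-support structure of basic open sets in the inverse limit (to get density). Everything else is formal.
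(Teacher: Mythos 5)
Your argument is correct and is the standard derivation of this fact (as in, e.g., Hofmann--Morris); the paper itself gives no proof, simply citing the result as a consequence of Peter--Weyl, so there is nothing to compare against beyond noting that your inverse-limit bookkeeping is exactly what the cited reference supplies. All the delicate points --- closedness of the image of $G/\ker\rho$ in $U(n)$, closure of $\mathcal{N}$ under finite intersections via the diagonal embedding, and the closed-plus-dense argument for surjectivity of $\Phi$ --- are handled correctly.
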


Let $(X, \sigma)$ be a minimal subshift of non-superlinear maximal pattern complexity and let $(G, +g_0)$ be its maximal equicontinuous factor. 
By \cref{thm:peter-weyl}, $G$ is an inverse limit of Lie groups, say $G = \varprojlim G_i$. Since $G$ is metrizable, we can take $(G_i)$ to be a sequence. Let $\pi_i: G \to G_i$ and $\pi_{i}^{j}: G_j \to G_i$ be the associated bonding maps. We can always assume that $\pi_i, \pi_{i}^j$ are surjective. Compact monothetic (hence abelian) Lie groups have the form $\T^d \times \faktor{\Z}{k\Z}$ where $d \in \N_0$ and $k \in \N$. Thus $G_i = \T^{d_i} \times \faktor{\Z}{k_i\Z}$ for all $i$. Since $\pi_{i}^j$ is surjective, $(d_i)$ is a nondecreasing sequence.

First, we will treat the special case that $G_i$ is connected (i.e. $k_i = 1$) for all $i$. In this case, if $d_i = 0$ for all $i$, then $G$ is just a single point. Now assume $d_i \geq 1$ for all $i$.
Each bonding map $\pi_i^{i+1}: \T^{d_{i+1}} \to \T^{d_i}$ is a $t_i$-to-$1$ covering map where $t_i \in \N \cup \{\infty\}$. If all but finitely many $t_i$ is equal to $1$, then $G$ is a torus. On the other hand, if $t_i \geq 2$ for infinitely many $i$, then $G$ is called a \emph{solenoidal space}.

\begin{lemma}\label{lem:solenoid_or_dim2_cannot_disconnect}
If $G$ is a torus of dimension $\geq 2$ or a solenoidal space, then $G$ cannot be partitioned into two nonempty open sets and a countable set.
\end{lemma}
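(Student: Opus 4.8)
The plan is to argue by contradiction: suppose $G = V_0 \sqcup V_1 \sqcup C$ with $V_0, V_1$ nonempty, open, and disjoint and $C$ countable. Then $G \setminus C = V_0 \sqcup V_1$, so it suffices to prove that $G \setminus C$ is connected. In both cases I would exploit the \emph{winding line} of $G$: writing the torus factors as $\T^d_{(n)} = \R^d/\Lambda_n$ for a descending chain of lattices $\Lambda_0 \supseteq \Lambda_1 \supseteq \cdots$ with $[\Lambda_n:\Lambda_{n+1}] = t_n$ and bonding maps induced by $\mathrm{id}_{\R^d}$, the formula $\iota(v) = (v+\Lambda_n)_n$ defines a continuous homomorphism $\iota \colon \R^d \to G$ whose composite with every coordinate projection $\pi_n \colon G \to \T^d_{(n)}$ is the (surjective) quotient map. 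Hence $H := \iota(\R^d)$ is path-connected, and it is dense in $G$ since $\pi_n(H) = \T^d_{(n)}$ for all $n$ while the sets $\pi_n^{-1}(O)$ form a base for $G$.

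For the torus case ($G = \T^d$ with $d \geq 2$) one has $H = G$ and $\iota$ is the universal covering map, with $\ker\iota$ a lattice isomorphic to $\Z^d$; thus $\iota^{-1}(C)$, a union of $|C|$ cosets of a discrete group, is countable. Since $d \geq 2$, the complement in $\R^d$ of a countable set is path-connected (any two points can be joined by a broken path through an auxiliary point, and of the uncountably many choices of auxiliary point giving pairwise interior-disjoint paths only countably many can be blocked). Therefore $G\setminus C = \iota(\R^d \setminus \iota^{-1}(C))$ is path-connected, hence connected, contradicting $G\setminus C = V_0 \sqcup V_1$.

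For the solenoidal case I would instead produce a single coset $g+H$ disjoint from $C$. The key point is that $[G:H]$ is uncountable. Indeed $\ker\pi_0 = \varprojlim_n \Lambda_0/\Lambda_n$ is the inverse limit of finite groups of order $\prod_{i<n} t_i$, which tends to infinity because $t_i \geq 2$ for infinitely many $i$; with its surjective bonding maps this inverse limit is an infinite profinite group with no isolated points, hence uncountable. On the other hand $H \cap \ker\pi_0 = \iota(\Lambda_0)$ is a quotient of $\Z^d$, so it is countable; consequently $\ker\pi_0$ is a union of uncountably many cosets of $H\cap\ker\pi_0$, and so $[G:H]$ is uncountable. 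Since $C$ is countable it meets only countably many cosets of $H$, so some coset $g+H$ avoids $C$ entirely. Being a homeomorphic translate of $H$, the set $g+H$ is connected and dense in $G$; as $g+H \subseteq G\setminus C$, the closure of $g+H$ in $G\setminus C$ is connected and equals all of $G\setminus C$, so $G\setminus C$ is connected — again a contradiction.

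I expect the main obstacle to be the solenoidal case, and within it the uncountability of $[G:H]$: this is precisely where the defining feature of a solenoidal space (infinitely many nontrivial covers, so $\prod t_i = \infty$) must be invoked to see that $\ker\pi_0$ is genuinely uncountable, while the winding line accounts for only a countable subgroup of it. Everything else — the reduction, the elementary properties of $\iota$ and of inverse limits, and the torus case — I expect to be routine point-set topology.
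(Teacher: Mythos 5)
Your proof is correct. The torus case is essentially the paper's argument in different clothing: the paper picks $x\in U_0$, $y\in U_1$ and uses that uncountably many pairwise disjoint paths join them, so one avoids the countable set; you instead pass to the universal cover and use that $\R^d$ minus a countable set is path-connected for $d\ge 2$. These are the same idea, and both are fine. The solenoidal case is where you genuinely diverge: the paper simply cites McCord (\emph{Inverse limit sequences with covering maps}) for the facts that a solenoidal space has uncountably many path components, each dense, and then disconnects one such component $P$ via $P=(P\cap U_0)\cup(P\cap U_1)$. You instead build the winding line $H=\iota(\R^d)$ explicitly, prove $[G:H]$ is uncountable by comparing the uncountable profinite kernel $\ker\pi_0=\varprojlim \Lambda_0/\Lambda_n$ with its countable intersection $\iota(\Lambda_0)$ with $H$, select a coset $g+H$ missing $C$, and conclude connectedness of $G\setminus C$ from the fact that it is sandwiched between the connected dense set $g+H$ and its closure. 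This is a self-contained replacement for the McCord citation (your cosets of $H$ play the role of his path components, and your density and cardinality arguments recover exactly the two properties the paper imports), at the cost of the routine but unstated normalization that a solenoidal space can be presented as $\varprojlim \R^d/\Lambda_n$ with bonding maps induced by the identity; that reduction is standard and your identification of the uncountability of $[G:H]$ as the crux is exactly right. Your final step (closure of a connected set is connected, applied inside the subspace $G\setminus C$) is also a slightly cleaner way to reach the contradiction than the paper's explicit two-open-set decomposition of $P$.
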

\begin{proof}
For contradiction, assume $G = U_0 \cup U_1 \cup B$ is a partition where $U_0, U_1$ are nonempty open sets, and $B$ is countable.

Case 1: $G$ is a torus of dimension $\geq 2$. Let $x \in U_0$ and $y \in U_1$. There are uncountably many disjoint paths in $G$ connecting $x$ and $y$. As a result, there is such a path $P$ that does not intersect $B$. Now $P = (P \cap U_0) \cup (P \cap U_1)$ is a partition of $P$ into two disjoint open subsets (relatively to $P$) and this contradicts the fact that $P$ is connected.

Case 2: $G$ is a solenoidal space. By \cite[Corollary 5.11]{McCord-inverseLimitCovering}, $G$ has uncountably many (disjoint) path components and so there is a path component $P$ that does not intersect $B$. By \cite[Theorem 5.8]{McCord-inverseLimitCovering}, each path component, and so $P$, is dense in $G$. Therefore, $P \cap U_0$ and $P \cap U_1$ are nonempty. As in the previous case, this leads to a contradiction.
\end{proof}

The following lemma says that the boundary $B$ in an associated MEF partition of a nonperiodic, minimal, non-superlinear maximal pattern subshift is nonempty.

\begin{lemma}\label{lem:boundary_non_empty}
If $X$ is an infinite minimal subshift with non-superlinear maximal pattern complexity and $G = U_0 \cup U_1 \cup B$ is the associated partition of the MEF, then $|B| > 0$.
\end{lemma}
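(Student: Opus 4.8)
The plan is to argue by contradiction: suppose $B = \varnothing$, so that $G = U_0 \cup U_1$ with $U_0, U_1$ disjoint nonempty open sets. Then $\phi : X \to G$ from \Cref{coding} is continuous with no boundary to worry about, and in fact the coding becomes a genuine continuous bijection. More precisely, with $B = \varnothing$ the sets $U_0, U_1$ are clopen, and the map sending $x \in X$ to the unique $y \in G$ with $x(n) = i \iff ng_0 + y \in U_i$ (using \Cref{gencoding}, whose partition $\{B_0,B_1\}$ of $B$ is vacuous) is well-defined and continuous. I would check it is a bijection intertwining $\sigma$ with $+g_0$: injectivity because $U_0, U_1$ separate points of $G$ under the rotation (this is exactly the content of the $\xi_\tau$ separating points in the proof of \Cref{main}, which relied only on $\phi$ being a function), and surjectivity from minimality. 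A continuous bijection between compact Hausdorff spaces is a homeomorphism, so $X$ would be topologically conjugate to the group rotation $(G, +g_0)$.

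The contradiction then comes in two ways depending on whether $G$ is finite or infinite. If $G$ is finite, then $(G, +g_0)$ is a periodic rotation, so $X$ is a finite periodic orbit, contradicting that $X$ is infinite. If $G$ is infinite, then $X$ is an infinite subshift conjugate to an infinite group rotation; but an infinite group rotation has positive topological sequence entropy along a suitable sequence (indeed it is not null — equivalently, a minimal equicontinuous system on an infinite space has $p^*_X(n)$ growing faster than any linear bound, in fact one can realize $p^*_X(n) = 2^n$ by choosing $\tau$ so that the $\tau$-coordinates separate points into all $2^{|\tau|}$ cells), which contradicts $X$ being null (recall pattern Sturmian and more generally non-superlinear subshifts are null by the theorem of Huang--Ye / Glasner--Megrelishvili quoted above, and \Cref{main} itself shows $B$ must be finite — but here I want the sharper statement that $B$ cannot be empty).

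Let me reorganize: the cleanest route is to directly produce windows $\tau$ witnessing exponential pattern complexity. Since $\{U_0, U_1\}$ is a clopen partition of the infinite compact group $G$ and $g_0$ generates a dense subgroup, for any finite set $F \subseteq G$ of distinct points I can (by density of $\{mg_0\}$ and the fact that finitely many translates of a nowhere-dense set stay nowhere dense — here $\partial U_i = \varnothing$, so this is automatic) find a window $\tau$ of size $n$ such that the cells $U_{w,\tau} = \bigcap_{s\in\tau}(U_{w(s)} - sg_0)$ realize $2^n$ distinct nonempty patterns: concretely, pick $n$ elements $m_1 < \dots < m_n$ of $\N_0$ such that the points $m_1 g_0, \dots, m_n g_0$ are "independent" with respect to the partition in the sense that the map $G \to \{0,1\}^n$, $y \mapsto (\mathbf{1}_{U_1}(y + m_j g_0))_j$ is surjective — possible because $G$ is infinite and $U_0, U_1$ are both open with nonempty interior, so one can inductively choose each $m_{j+1}g_0$ to land, within each already-constructed nonempty cell, in both $U_0$ and $U_1$ after translation (each nonempty cell is open, hence has points in the dense orbit, and since $U_0, U_1$ are clopen and the orbit is dense in the infinite group, each cell can be split). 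This yields $p^*_X(n) \geq 2^n$, contradicting nullity.

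The main obstacle I anticipate is the inductive splitting argument in the last paragraph: I need to ensure that at each stage every nonempty cell $U_{w,\tau_j}$ genuinely contains points of both $U_0 - m_{j+1}g_0$ and $U_1 - m_{j+1}g_0$ for a common choice of $m_{j+1}$, and the subtlety is that these are finitely many open conditions that must be satisfied simultaneously. This is handled exactly as in the proof of \Cref{main}: each cell is nonempty open, so $U_{w,\tau_j} \cap (U_i - mg_0) \neq \varnothing$ for some $m$ in the dense orbit, and since there are only finitely many cells and two choices of $i$, a pigeonhole/Baire-category argument on the dense orbit (translates of the nowhere-dense complements of the relevant open sets) produces a single $m$ that works for all of them — in fact with $B = \varnothing$ the "nowhere-dense" sets are empty, making this step even easier than in \Cref{main}. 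Once the $2^n$ patterns are produced for every $n$, nullity fails and we are done; hence $|B| > 0$.
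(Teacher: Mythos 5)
There is a genuine gap, and it sits exactly where you flagged your ``main obstacle.'' Your first step --- that $B = \varnothing$ forces $U_0, U_1$ to be clopen and $\phi$ to be a conjugacy between $X$ and the rotation $(G, +g_0)$, and that a finite $G$ contradicts $X$ being infinite --- is fine and matches the paper. But your contradiction in the infinite-$G$ case rests on a false claim: an infinite group rotation is \emph{not} non-null; equicontinuous systems are null (indeed tame), and a coding of a monothetic rotation by a \emph{clopen} partition has the smallest possible complexity, not the largest. Concretely, a clopen subset of a compact group is a union of cosets of an open subgroup $N$ (pull back from the profinite quotient $G/G_0$), so the coding $n \mapsto \mathbf{1}_{U_1}(y+ng_0)$ factors through the finite group $G/N$ and is periodic. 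This also kills your ``reorganized'' direct argument: the cells $U_{w,\tau}$ are unions of cosets of $N$, and a translate $C + mg_0$ of such a cell is entirely contained in $U_0$ or entirely in $U_1$, so it can never meet both. The splitting step in the proof of \Cref{main} works precisely because the points $b_i$ lie on the \emph{boundary} of $U_0$ and $U_1$, so every small cell containing $b_i - mg_0$ meets both $U_0 - mg_0$ and $U_1 - mg_0$; setting $B = \varnothing$ does not make that step ``even easier,'' it makes it impossible. (For a concrete counterexample to your splitting claim, take $G = \varprojlim \Z/2^k\Z$, $g_0 = 1$, and $U_i$ the set of elements with first coordinate $i$: every cell is a coset of $2G$ and is never split.) Relatedly, your injectivity claim for the coding map $G \to \{0,1\}^{\N_0}$ fails in this clopen setting for the same reason --- the coding only sees $G/N$.

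The correct way to close the argument in the infinite case is the one the paper uses (or a variant): once $\phi$ is a conjugacy, $X$ is an infinite subshift, hence expansive, conjugate to the isometric rotation $(G,+g_0)$, hence equicontinuous; an expansive equicontinuous system is finite (uniform discreteness of the space), a contradiction. Equivalently, you could finish by observing that the clopen coding described above is periodic, so $X$ would be a finite periodic orbit, again contradicting that $X$ is infinite. Either of these replaces your final two paragraphs; the complexity-counting route cannot be repaired.
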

\begin{proof}
Let $\phi: X \to G$ be the factor map from $(X, \sigma)$ to its maximal equicontinuous factor $(G, +g_0)$. If $|B| = 0$, then $\phi$ is an isomorphism. Note that the rotation on $G$ is an isometry (i.e. $d_G(x + n g_0, y + n g_0) = d_G(x, y)$ for $x, y \in G$ and $n \in \N$). 
On the other hand, since $X$ is a subshift, it is expansive (i.e. there exists $C > 0$ such that for $x \neq y \in X$, $d_X(\sigma^n x, \sigma^n y) > C$ for some $n \in \N$) unless $X$ is a finite rotation. However, $X$ is infinite and so this is not possible.


\end{proof}

We can now complete our characterization of possible maximal equicontinuous factors for minimal subshifts with non-superlinear maximal pattern complexity.


\begin{proposition}\label{thm:possible_MEF_nonsuperlinear}
If $G$ is the MEF of an infinite minimal subshift with non-superlinear maximal pattern complexity, then $G$ is either a product of a circle with a finite cyclic group or an odometer.
\end{proposition}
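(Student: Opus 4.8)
The plan is to continue the analysis of the inverse limit presentation $G = \varprojlim(\T^{d_i} \times \Z/k_i\Z)$ already set up above, now treating the cases not yet covered. Throughout, fix an MEF partition $\{U_0, U_1, B\}$ for $X$ (which exists by \Cref{kerrli} and \Cref{coding}); by \Cref{grpcor} the set $B$ is finite, and by \Cref{lem:boundary_non_empty} it is nonempty. Let $G^0$ denote the identity component of $G$, so that $G^0 = \varprojlim \T^{d_i}$ is a connected compact monothetic group and $\Omega := G/G^0 = \varprojlim \Z/k_i\Z$ is a profinite monothetic group, hence either a finite cyclic group or an (infinite) odometer. If $G^0$ is trivial then $G = \Omega$, and since $X$ is infinite so is $G$, so $G$ is an infinite odometer and we are done. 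And the connected case ($\Omega$ trivial) is exactly the situation treated above: by the structure recalled before \Cref{lem:solenoid_or_dim2_cannot_disconnect}, $G = G^0$ is a torus or a solenoidal space, and \Cref{lem:solenoid_or_dim2_cannot_disconnect} applied to the partition $\{U_0, U_1, B\}$ rules out tori of dimension $\geq 2$ and solenoidal spaces, leaving $G = \T$. So it remains to handle the case where $G^0$ is nontrivial — hence infinite, being a nontrivial connected compact group, and a torus $\T^d$ with $d \geq 1$ or a solenoidal space — while $\Omega$ is nontrivial.

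First I would dispose of the subcase where $\Omega$ is finite. Then $G^0$ is open and $G$ is a disjoint union of finitely many clopen translates of $G^0$. Picking $b \in B$ and letting $C$ be its connected component, clopenness of $C$ and the fact that every neighborhood of $b$ meets both $U_0$ and $U_1$ (as $b \in \partial U_0 = \partial U_1$) force $C$ to meet both $U_0$ and $U_1$; thus $C \cong G^0$ is partitioned into the two nonempty relatively open sets $C \cap U_0$, $C \cap U_1$ and the finite set $C \cap B$, so \Cref{lem:solenoid_or_dim2_cannot_disconnect} gives $G^0 = \T$. Then $G$ is a compact abelian Lie group of dimension $1$, hence $G \cong \T \times F$ for a finite abelian group $F$; since $G$ is monothetic, $F \cong \Omega$ is cyclic, and $G$ is a product of a circle with a finite cyclic group.

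The remaining subcase, $\Omega$ infinite (hence a perfect profinite group), should be impossible, and this is the heart of the argument — the step I expect to be the main obstacle, since the partition $\{U_0,U_1,B\}$ lives on all of $G$ and one must extract a contradiction from the single fact that $B$ is finite. Let $q \colon G \to \Omega$ be the quotient map, which is continuous, open, and closed (as $G$ is compact). For $i \in \{0,1\}$ set $\bar S_i = \{\bar\omega \in \Omega : q^{-1}(\bar\omega) \subseteq U_i\}$; each fiber $q^{-1}(\bar\omega)$ is a connected translate of $G^0$, and, arguing as in the finite subcase, a fiber fails to lie in $U_0$ or in $U_1$ precisely when it meets $B$. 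Consequently $\bar F := \Omega \setminus (\bar S_0 \cup \bar S_1) = q(B)$ is finite and nonempty, $\bar S_0 \sqcup \bar S_1$ is dense in $\Omega$, each $\bar S_i$ is open (by compactness of fibers and openness of $U_i$, via the tube lemma), and each $\bar S_i$ is nonempty (if $\bar S_1 = \varnothing$ then $U_1 \subseteq q^{-1}(\bar F)$, a set with empty interior since $\Omega$ is perfect, forcing $U_1 = \varnothing$). I would then show the key fact $\overline{\bar S_0} \cap \overline{\bar S_1} \neq \varnothing$: one checks $U_0 \subseteq q^{-1}(\overline{\bar S_0})$ — given $p \in U_0$, a small neighborhood $W$ of $p$ lies in $U_0$, so $q(W)$ is an open neighborhood of $q(p)$ that cannot meet $\bar S_1$ (else $W$ would meet $q^{-1}(\bar S_1) \subseteq U_1$) and hence meets the dense set $\bar S_0$, and shrinking $W$ shows $q(p) \in \overline{\bar S_0}$ — so that $B \subseteq \overline{U_0} \subseteq q^{-1}(\overline{\bar S_0})$ and symmetrically $B \subseteq q^{-1}(\overline{\bar S_1})$; since $B \neq \varnothing$, the intersection $\overline{\bar S_0} \cap \overline{\bar S_1}$ is nonempty.

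Finally I would fix $\bar\omega^*$ in that intersection; since $\bar S_0, \bar S_1$ are disjoint and open, $\bar\omega^* \in \bar F$, so $\bar\omega^*$ is a limit point of both $\bar S_0$ and $\bar S_1$. Applying the same neighborhood argument at each point $p \in q^{-1}(\bar\omega^*)$ shows every neighborhood of $p$ meets both $q^{-1}(\bar S_0) \subseteq U_0$ and $q^{-1}(\bar S_1) \subseteq U_1$, so $p$ lies in neither open set; hence the entire fiber $q^{-1}(\bar\omega^*)$, a translate of the infinite group $G^0$, is contained in $B$, contradicting finiteness of $B$. This rules out $\Omega$ infinite, and combining all cases shows $G$ is a product of a circle with a finite cyclic group or an odometer. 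The delicate point throughout the last case is precisely that the openness of $q$, together with $\Omega$ having no isolated points, forces a whole $G^0$-fiber to be squeezed out of both $U_0$ and $U_1$; by contrast the connected and finite-quotient cases are routine once \Cref{lem:solenoid_or_dim2_cannot_disconnect} is in hand.
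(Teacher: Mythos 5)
Your proof is correct, and in the disconnected cases it takes a genuinely different route from the paper's. The setup (inverse limit of the groups $\T^{d_i} \times \Z/k_i\Z$), the totally disconnected case, and the connected case via \Cref{lem:solenoid_or_dim2_cannot_disconnect} coincide with the paper's argument, and your finite-quotient case matches it in substance. Where you diverge is in organizing everything else by the quotient $\Omega = G/G^0$ rather than by the dimensions $d_i$: your single infinite-$\Omega$ argument replaces two separate arguments in the paper. The paper rules out $d_i \geq 2$ by a translation argument inside $G$ (components contained in $U_1$ accumulate on a boundary point of a component meeting $U_0$, and translating by elements of the identity component sweeps that whole component into the common boundary), and rules out $G \cong \T \times \mathcal{O}$ by a dynamical argument (every circle fiber other than the one meeting $B$ lies wholly in $U_0$ or $U_1$, so aperiodicity of the odometer forces uncountably many points to share one of only two coding sequences, contradicting surjectivity of the factor map). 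Your argument --- project the partition to $\Omega$, observe that the fibers lying wholly in $U_0$ or in $U_1$ give disjoint open sets $\bar S_0, \bar S_1$ whose complement $q(B)$ is finite and hence nowhere dense in the perfect space $\Omega$, and use openness of the quotient map to force the entire fiber over a point of $\overline{\bar S_0} \cap \overline{\bar S_1} \supseteq q(B)$ into the finite set $B$ --- handles both subcases at once, purely topologically, and in particular still works when $G^0 = \T$ and a single bad fiber can legitimately be cut into two arcs by two boundary points (which is exactly why the paper needs its separate coding argument there). The benefit of your version is uniformity and the elimination of the dynamical step; it isolates the real obstruction, namely that an infinite totally disconnected quotient squeezes an infinite $G^0$-coset into the finite boundary. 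The one point you share with the paper without further justification is the classification of a nontrivial $G^0 = \varprojlim \T^{d_i}$ as a torus or a solenoidal space, but that is the paper's own taxonomy and is used identically in its proof.
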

\begin{proof}
Let $(X, \sigma)$ be an infinite minimal subshift with non-superlinear maximal pattern complexity and $(G, +g_0)$ be its maximal equicontinuous factor. 
As discussed after \cref{thm:peter-weyl}, $G = \varprojlim G_i$ where $G_i = \T^{d_i} \times \faktor{\Z}{k_i\Z}$ for all $i$.  Let $\pi_i: G \to G_i$ and $\pi_{i}^j: G_j \to G_i$ denote the associated (surjective) bonding maps.


If $d_i = 0$ for all for all $i$, then $G$ is either a finite cyclic group or an infinite odometer $\varprojlim \faktor{\Z}{k_i\Z}$. The finite case is impossible since $X$ is infinite.

Next we claim that it cannot be true that that $d_i \geq 2$ for large $i$. Let $G = U_0 \cup U_1 \cup B$ be the associated partition of the MEF $G$. By \Cref{main} and \cref{lem:boundary_non_empty}, $B$ is a nonempty, finite set. Let $H$ be a connected component of $G$. Then $H$ is an inverse limit of the form $\varprojlim \T^{d_i}$ where the bonding maps $\pi_i^j: \T^{d_j} \to 
\T^{d_i}$ are surjective continuous homomorphisms. By \cref{lem:solenoid_or_dim2_cannot_disconnect}, $H$ cannot be partitioned into two disjoint open sets and a finite set.
Thus if $U_0$ intersects $H$, $U_1$ must be disjoint from $H$ and vice versa. This holds for every connected component $H$ of $G$.

Without loss of generality, assume $U_0$ and $B$ intersect the connected component $H$. As discussed above, $U_1$ and $H$ are disjoint. However, as $B$ is the common boundary of $U_0$ and $U_1$, and $B$ is finite, there is a sequence of connected components $(H_i)$ of $G$ such that
\begin{enumerate}
    \item $H_i \subseteq U_1$
    \item \label{item:BH_closure} $B \cap H$ is in the closure of $\bigcup_i H_i$.
\end{enumerate}
Let $H_0$ be the connected component of $G$ containing the identity $0_G$. Then we can write $H = g + H_0$ and $H_i = g_i + H_0$ for some $g, g_i \in G$. Let $b = g + h_0 \in B \cap H$ where $h_0 \in H_0$. Item \eqref{item:BH_closure} implies that there exists a sequence $h_i \in H_0$ such that $g_i + h_i \to g + h_0$. Thus, for any $h \in H_0$, $g_i + h_i + h \to g + h_0 + h$. Since $h \in H_0$ is arbitrary, the entire $H = g + H_0$ is in the closure of $\bigcup_i H_i$.
Therefore, the common boundary of $U_0$ and $U_1$ contains $H$ instead of just $B$, a contradiction. 

It remains to deal with the case $d_i = 1$ for all but finitely many $i$. Similarly to the case $d_i \geq 2$, each connected component of $G$ is an inverse limit of $1$-dimensional tori and so is a $1$-dimensional torus or a solenoidal space. If they are solenoidal spaces, then arguing as before, in light of \cref{lem:solenoid_or_dim2_cannot_disconnect}, the common boundary of $U_0$ and $U_1$ cannot be finite. Thus each connected component is the torus $\T$. 
Consider two cases:
\begin{enumerate}
    \item Case 1: $G$ has finitely many copies of the torus $\T$, i.e. $G \cong \T \times \faktor{\Z}{k\Z}$ for some $k \in \N$. We are done. 
    

    \item Case $2$: $G$ has infinitely many copies of the torus $\T$, i.e. $G \cong \T \times \mathcal{O}$ with the product topology. Then two boundary points must be in one torus, say $\T \times \{y_0\}$, and each other torus belongs to either $U_0$ or $U_1$. Since the addition on the odometer $\mathcal{O}$ is not periodic, the orbit of every point in $\T \times \{y_0\}$ visits $\T \times \{y_0\}$ only at the time $n = 0$ and never returns. As a result, the (infinite) collection of points in $\T \times \{y_0\}$ corresponds to only two possible coding sequences in $X$ and this contradicts the fact that $X \to G$ is a (surjective) factor map.

\end{enumerate}
\end{proof}





\subsection{Characterizing recurrent sequences with non-superlinear maximal pattern complexity}


For the rest of this subsection, we assume that $x$ is a nonperiodic, recurrent sequence with non-superlinear maximal pattern complexity. To use the results of the previous subsection, we need to know that $x$ generates a minimal subshift, i.e., is uniformly recurrent. This is implied by \Cref{mainthm:recnotmin}, which we prove now.

\begin{proof}[Proof of \cref{mainthm:recnotmin}]

Suppose that $x$ is recurrent but not uniformly recurrent. Then $X = \overline{\mathcal{O}(x)}$ properly contains a minimal subshift $Y$, and so there exists some $N$ for which $L_X(N) \setminus L_Y(N) \neq \varnothing$.

Define a function $\phi: X \rightarrow \{0,1\}^{\N_0}$ as follows: for $i \in \N_0$, $(\phi(x))(i) = 1$ if and only if
$x([i, i+N)) \in L_Y(N)$. Then $\phi$ is a so-called sliding block code, meaning that the image $x' := \phi x$ of the recurrent sequence $x$ is also recurrent.
We note that since $Y \subseteq X$, $x'$ contains arbitrarily long strings of $1$s, and since $L_X(N) \setminus L_Y(N) \neq \varnothing$, $x'$ contains a $0$.  
This implies that the block $01^n$ is a subword of $x'$ for all $n$, and so must appear twice in $x'$ by recurrence, implying that $1^n 0$ is also a subword of $x'$ for all $n$. 

For the rest of this argument we bound maximal pattern complexity of $x'$.
We will construct a sequence of windows $\tau_n \subseteq \mathbb{N}$ with $|\tau_n| = 2^n$, beginning with 
$\tau_0 = \{0\}$.

For any $n$, suppose that $\tau_n$ has been defined, and that $M$ is sufficiently large so that
$x'([0, M])$ contains all $\tau_n$-words in $L_{x'}(\tau_n)$. By recurrence, we may choose 
$K > \max(\tau_n)$ so that $x'([K, K+M]) = x'([0, M])$. Define $\tau_{n+1} = \tau_n \cup (K + \tau_n)$.

By definition of $K$, for every word $w \in L_{x'}(\tau_n)$, we have $ww \in L_{x'}(\tau_{n+1})$. 
For any $w$ containing a $0$, if $ww$ were the only word in $L_{x'}(\tau_{n+1})$ beginning with $w$, then every copy of $w$ in $x'$ would force a $w$ exactly $K$ units later, and this would continue indefinitely. This is impossible since it contradicts $x'$ containing arbitrarily long strings of $1$s. 

So, for every $w \in L_{x'}(\tau_n)$ except $1^{\tau_n}$, $w$ is a prefix of at least two words in 
$L_{x'}(\tau_{n+1})$. We now choose $D$ greater than the diameter of $\tau_{n+1}$. Since $1^D 0$ is a subword of $x'$, by beginning at the left edge of this word and moving to the right, for each $1 \leq i \leq |\tau_n| = 2^n$, we can find a word in $L_{x'}(\tau_{n+1})$ beginning with $1^{\tau_n}$ and with leftmost $0$ at the $i$th location within the second copy of $\tau_n$. 
This means that $1^{\tau_n}$ is a prefix of at least $2^n$ words in $L_{x'}(\tau_{n+1})$ other than $1^{\tau_{n+1}}$. 

Putting this together yields $|L_{x'}(\tau_{n+1})| \geq 2|L_{x'}(\tau_n)| + 2^n - 1$. A simple proof by induction then yields the inequality $|L_{x'}(\tau_n)| \geq (n+2) 2^{n-1}$ for all $n$.

Then, if one defines the window $\rho_n = \tau_n + [0,N)$, clearly $|L_x(\rho_n)| \geq |L_{x'}(\tau_n)|$, since $\phi$ is a surjection from the former set to the latter. Since $|\rho_n| \leq N 2^n$ and pattern complexity is monotone, we arrive at $p^*_x(N 2^n) \geq (n+1) 2^{n-1}$ for all $n$. For arbitrary $k$, we can take the maximal $n$ for which $N 2^n \leq k$ to get
\[
    p^*_x(k) \geq p^*_x(N 2^n) \geq (n+1) 2^{n-1} \geq k \frac{n+1}{4N} \geq \frac{k \log_2 (k/N)}{4N},
\] 
yielding 
\[
    \liminf_{k \to \infty} \frac{p^*_x(k)}{k \ln k} \geq \frac{1}{4N\ln 2}
\]
and completing the proof.
\end{proof}



In light of \cref{mainthm:recnotmin}, we now know that $x$ is uniformly recurrent and $X = \overline{\OO(x)}$ is a minimal subshift with non-superlinear maximal pattern complexity, implying by \Cref{thm:possible_MEF_nonsuperlinear} that the MEF of $X$ is either the product of a circle and a finite cyclic group or an odometer. 
We begin with the first case.

\begin{proposition}\label{part1}
If $x$ is a recurrent sequence with non-superlinear maximal pattern complexity and if $X = \overline{\OO(x)}$ has MEF a product of a circle with finite cyclic group, then $x$ is a periodic interleaving of finitely many sequences which are each either a circle rotation interval coding sequence or constant, and all of the circle rotation interval coding sequences are associated with the same irrational rotation. 
\end{proposition}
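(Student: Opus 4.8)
The plan is to unwind the coding description of $x$ coming from \Cref{gencoding} along the cyclic coordinate of the MEF, and to observe that on each circle fiber the coding partition collapses to a partition into finitely many intervals.

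By the discussion preceding the statement, $x$ is uniformly recurrent, $X = \overline{\OO(x)}$ is minimal, and its MEF is $(G, +g_0)$ with $G = \T \times \faktor{\Z}{k\Z}$. Write $g_0 = (\alpha, a)$; monotheticity of $(G, +g_0)$ forces $\gcd(a,k) = 1$ and forces the $k\alpha$-rotation to be dense in $\T$, hence $\alpha$ (and $k\alpha$) is irrational. By \Cref{grpcor} and \Cref{lem:boundary_non_empty}, any MEF partition $\{U_0, U_1, B\}$ of $G$ has $B$ finite and nonempty. Applying \Cref{gencoding}, there are a partition $\{B_0, B_1\}$ of $B$ and a point $y = (\beta, j_0) \in G$ with $x(n) = c \iff y + ng_0 \in U_c \cup B_c$ for $c \in \{0,1\}$.

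Now set $x^{(i)}(n) := x(i + kn)$ for $0 \le i < k$, so that $x$ is the periodic (period $k$) interleaving of $x^{(0)}, \dots, x^{(k-1)}$. Since $kg_0 = (k\alpha, 0)$, we get $x^{(i)}(n) = c \iff (y + ig_0) + n(kg_0) \in U_c \cup B_c$, and the orbit $(y + ig_0) + n(kg_0)$ stays inside the single fiber $\T \times \{j_i\}$, where $j_i := j_0 + ia \bmod k$, moving by the irrational rotation $k\alpha$ from the base point $\beta + i\alpha$. Identifying $\T \times \{j_i\}$ with $\T$, this realizes $x^{(i)}$ as a coding of the $k\alpha$-rotation of $\T$ relative to a partition $\T = (U_0' \cup B_0') \sqcup (U_1' \cup B_1')$ in which $U_c'$ is open and $B_c' \subseteq B$ is finite. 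The key elementary point is that an open subset of $\T$ with finite complement is a finite disjoint union of open arcs, and since $U_0'$ and $U_1'$ are disjoint open sets covering $\T$ minus the finite set $B_0' \cup B_1'$, each of the finitely many arcs of $\T \setminus (B_0' \cup B_1')$, being connected, lies entirely in $U_0'$ or in $U_1'$. Decomposing $U_0' \cup B_0'$ and $U_1' \cup B_1'$ into their maximal sub-intervals (and translating by $-(\beta + i\alpha)$, which is harmless) thus exhibits $x^{(i)}$ as the coding of the $k\alpha$-rotation by a partition of $\T$ into finitely many intervals carrying $\{0,1\}$-labels: if these labels are all equal --- exactly when $B \cap (\T \times \{j_i\}) = \varnothing$, forcing the whole fiber into one $U_c$ --- then $x^{(i)}$ is constant, and otherwise $x^{(i)}$ is a circle rotation interval coding sequence with rotation number $k\alpha$ in the sense of \Cref{def:coding}. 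In every case the non-constant $x^{(i)}$ share the common irrational rotation number $k\alpha$, which is the desired conclusion.

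I expect the main obstacle to be bookkeeping rather than substance: one must carefully match the coding furnished by \Cref{gencoding}, with its essentially arbitrary assignment of each boundary point to $B_0$ or $B_1$, to the precise form demanded by \Cref{def:coding} --- in particular checking that the fiberwise partition extracted above really is a genuine partition of $\T$ into intervals with the correct labels, and cleanly separating the honest-coding case from the constant case.
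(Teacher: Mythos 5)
Your proposal is correct and follows essentially the same route as the paper: invoke the minimality/coding setup from Theorem~\ref{mainthm:recnotmin} and Theorem~\ref{gencoding}, split $x$ into its $k$ residue-class subsequences, and observe that each one codes the irrational rotation by $k\alpha$ on a single circle fiber via a partition whose boundary is a finite subset of $B$, hence a partition into finitely many intervals (or a trivial partition, giving a constant sequence). The only differences are cosmetic: you keep the general generator $(\alpha,a)$ with $\gcd(a,k)=1$ where the paper normalizes to $(\alpha,1)$, and you spell out the connectedness argument for why the fiber partition consists of finitely many arcs, which the paper states in one line.
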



\begin{proof}
By \Cref{mainthm:recnotmin}, $x$ is uniformly recurrent and $X$ is minimal; denote its MEF by $\left( \mathbb{T} \times \faktor{\Z}{k\Z}, +(\alpha, 1)\right)$, and denote this group by $G$. Denote the associated MEF partition by $\{U_0, U_1, B\}$.  

Then by \Cref{gencoding}, there exists a partition $\{B_0, B_1\}$ of $B$ and $g \in G$ so that the orbit of the identity is coded by $x$ in the sense that $x(n) = i$ if and only if $n(\alpha, 1) \in (U_i \cup B_i) - g$. We note that for each $j$, the sets $(U_i \cup B_i) - g$ induce a partition $\{A^{(j)}_0, A^{(j)}_1\}$ of $\mathbb{T} \times \{j\}$, which must be either trivial (i.e. one of the sets is empty) or into finitely many intervals (since all endpoints are elements of $B$). 

Split $x$ into the sequences $x^{(j)}$, $0 \leq j < k$, defined by $x^{(j)}(n) = x(j + nk)$. Then for each $j$, the sequence $x^{(j)}$ is obtained by coding the orbit of $j \alpha$ under rotation by $k \alpha$ by the partition $\{A^{(j)}_0, A^{(j)}_1\}$ since
\begin{align*}
    x^{(j)}(m) = x(j + mk) = i &\Longleftrightarrow (j+mk)(\alpha,1) = (j\alpha + m(k\alpha), j) \in (U_i \cup B_i - g) \\
    &\Longleftrightarrow j\alpha + m(k \alpha) \in A^{(j)}_i.
\end{align*}
If the partition $\{A^{(j)}_0, A^{(j)}_1\}$ is trivial, then $x^{(j)}$ is constant, and if it consists of finitely many intervals, then $x^{(j)}$ is a circle rotation interval coding sequence by definition. This completes the proof.
\end{proof}

It remains only to treat the case where the MEF of $X$ is an odometer.

\begin{proposition}\label{part2}
If $x$ is a recurrent sequence with non-superlinear maximal pattern complexity and if $X = \overline{\OO(x)}$ has MEF an odometer, then $x$ is an element of an $m$-hole Toeplitz subshift for some $m > 0$. 
\end{proposition}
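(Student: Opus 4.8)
The plan is to show that the distinguished element $x_0 \in X$ coding the orbit of the identity, furnished by \Cref{coding}, is literally a Toeplitz sequence; then $X$ is a Toeplitz subshift and the conclusion follows from \Cref{mhole}.

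First I would fix notation. By \Cref{mainthm:recnotmin}, $x$ is uniformly recurrent, so $X = \overline{\OO(x)}$ is minimal; it is infinite since $x$ is not periodic, and null since it has non-superlinear (hence subexponential) maximal pattern complexity. Write its MEF as the odometer $\mathcal{O} = \varprojlim \Z/n_k\Z$ where, after passing to a subsequence of $(n_k)$, we may assume $n_k$ properly divides $n_{k+1}$ for each $k$. By \Cref{kerrli} and \Cref{coding}, fix an MEF partition $\{U_0,U_1,B\}$ and the distinguished $x_0\in X$ with $x_0(n)=i \iff ng_0\in U_i$, where $g_0 = 1_{\mathcal{O}}$ and, crucially, $ng_0\notin B$ for all $n\ge 0$; note that $ng_0$ has $k$-th coordinate $n\bmod n_k$. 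By \Cref{grpcor} and \Cref{lem:boundary_non_empty}, $B$ is finite and nonempty; put $m = |B| > 0$.

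The heart of the argument is to verify that $x_0$ is a Toeplitz sequence with period structure $(n_k)$. For each $k$ and residue $a$, let $Q_{k,a}=\{y\in\mathcal{O}: y(k)=a\}$; these clopen cylinders form a basis of $\mathcal{O}$, and for fixed $k$ they partition $\mathcal{O}$. Fix $n\ge 0$. Since $ng_0\notin B$ and $B$ is the common boundary of the open sets $U_0,U_1$, the point $ng_0$ lies in the open set $U_{x_0(n)}$, so some basic cylinder around it is contained in $U_{x_0(n)}$; let $\ell(n)$ be the least $k$ with $Q_{k,\,n\bmod n_k}\subseteq U_{x_0(n)}$. Then $x_0$ is constant, equal to $x_0(n)$, on the progression $(n\bmod n_{\ell(n)})+n_{\ell(n)}\N_0$. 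I would then check that the sets $A_k:=\{n\in\N_0:\ell(n)=k\}$ are pairwise disjoint, cover $\N_0$ (every $n$ has $\ell(n)<\infty$ by the above), and that each $A_k$ is a union of full residue classes modulo $n_k$: if $\ell(n)=k$ and $n'\equiv n\pmod{n_k}$, then $Q_{k,\,n'\bmod n_k}=Q_{k,\,n\bmod n_k}\subseteq U_{x_0(n)}$, forcing $x_0(n')=x_0(n)$ and $\ell(n')\le k$, while for every $k'<k$ we have $n'\equiv n\pmod{n_{k'}}$ (since $n_{k'}\mid n_k$), so $Q_{k',\,n'\bmod n_{k'}}=Q_{k',\,n\bmod n_{k'}}$ is not contained in any $U_i$, giving $\ell(n')\ge k$. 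Hence $\N_0=\bigsqcup_k A_k$ decomposes into arithmetic progressions $a_{k,i}+n_k\N_0$ on each of which $x_0$ is constant, so $x_0$ is Toeplitz with period structure $(n_k)$.

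Finally, since $X$ is minimal and $x_0\in X$, $X=\overline{\OO(x_0)}$ is a Toeplitz subshift, and $\{U_0,U_1,B\}$ is an MEF partition for it with $|B|=m$; by \Cref{mhole}, $X$ is an $m$-hole Toeplitz subshift. As $x\in X$, it is an element of an $m$-hole Toeplitz subshift, completing the proof. The only real work is the bookkeeping of the previous paragraph, namely showing the ``first filling level'' decomposition is genuinely a partition of $\N_0$ into constant arithmetic progressions (together with the minor normalization to proper divisibility of $(n_k)$); alternatively one could bypass it by citing the folklore fact that the minimal almost $1$-$1$ extensions of odometers are precisely the Toeplitz subshifts.
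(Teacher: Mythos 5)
Your proof is correct and follows essentially the same route as the paper's: the paper's own argument simply asserts that a minimal almost $1$-$1$ extension of an odometer is a Toeplitz subshift with the corresponding period structure, and then applies \cref{grpcor} (together with \cref{lem:boundary_non_empty}) and \cref{mhole}, exactly as you do. The only difference is that you supply a detailed verification of that folklore step via your ``first filling level'' decomposition, which the paper states without proof.
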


\begin{proof}
Suppose that $x$ is as in the theorem. As before, $X$ must be minimal and denote by $\mathcal{O} = \varprojlim \faktor{\Z}{n_k\Z}$ its MEF. Then since $X$ is an almost $1$-$1$ extension of $\mathcal{O}$, it is a Toeplitz subshift with period structure $(n_k)$. By \Cref{grpcor}, there is an associated MEF partition with $|B| = m < \infty$, which implies by \Cref{mhole} that $X$ is an $m$-hole Toeplitz subshift.




\end{proof}

\begin{remark}
We note that although the converse of \Cref{part1} holds, i.e. all such codings of circle rotations have linear maximal pattern complexity, it is very much false for \Cref{part2}; there exist $1$-hole Toeplitz sequences which are not even null (see \cite[Section 11]{Kerr_Li-independence_Cstar}) and so have $p^*_x(n) = 2^n$.
\end{remark}

\begin{proof}[Proof of \cref{mainthm:non-superlinear}]
The theorem follows by combining Propositions \ref{part1}, \ref{part2} and \Cref{mainthm:recnotmin}.
\end{proof}








\section{Pattern Sturmian: minimal case and proof of \texorpdfstring{\cref{mainthm:sturmian}}{Theorem A}}

We now restrict to the case where $x$ is pattern Sturmian (recall that this means $p^*_x(n) = 2n$ for all $n$), and recurrent, therefore uniformly recurrent by \Cref{mainthm:recnotmin}. 
We will show that $x$ must be either a circle rotation interval coding sequence or a sequence in a nearly simple Toeplitz subshift.

Our main tool is the fact that by \Cref{main}, \Cref{lem:boundary_non_empty}, and \Cref{thm:possible_MEF_nonsuperlinear}, the minimal subshift $X = \overline{\OO(x)}$ has an associated MEF partition with $G$ either the product of a circle and a finite cyclic group or an odometer, and $|B|$ either $1$ or $2$. 

We again begin with the case where $G$ is the product of a circle and a finite cyclic group. The following lemma is needed for \cref{circcase} and \cref{lem:glue_up}.
\begin{lemma}\label{lem:sturmian_no_constant_words}
Let $I$ be a nonempty, proper interval on $\T$ and $\alpha$ be an irrational number. Let $x(n) = 1_I(n \alpha)$ for all $n \in \N_0$.
There exists $k \in \N$ such that for sufficiently large $n$, the window $\tau = \{0, k, 2k, \ldots, (n-1)k\}$ satisfies $|L_{x}(\tau)| = 2n$ and $L_x(\tau)$ does not contain the constant words $00\ldots 0$ and $11 \ldots 1$.

\end{lemma}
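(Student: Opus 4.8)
The plan is to choose $k$ so that rotation by $k\alpha$ is still irrational (any $k\in\N$ works, since $\alpha$ is irrational), and then analyze the partition of $\T$ generated by the $n$ translates $I, I-k\alpha, \dots, I-(n-1)k\alpha$. Writing $I=[a,b)$ (the half-open case; the cases where $I$ is closed or open are essentially identical by the argument in \Cref{lem:simple_rotation_pattern_Sturmian}), the refinement $\bigvee_{j=0}^{n-1}(\xi - jk\alpha)$, where $\xi=\{I,\T\setminus I\}$, has its atoms determined by the $2n$ endpoints $\{a-jk\alpha, b-jk\alpha : 0\le j < n\}$, which are all distinct because $k\alpha$ is irrational and $a-b$ is not a multiple of $k\alpha$ for a suitable choice of $k$ (here one may need to avoid the finitely many $k$ for which $b-a \equiv jk\alpha$ for small $j$, but generically, and in fact for all but at most one $k$, this is automatic). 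Hence the partition has exactly $2n$ atoms, each a half-open interval, and as in the proof of \Cref{lem:simple_rotation_pattern_Sturmian} each nonempty atom corresponds to exactly one $\tau$-word appearing in $x$; since the orbit $\{mk\alpha\}_m$ is dense, every atom is visited, so $|L_x(\tau)| = 2n$.

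The second claim — that neither constant word $0^n$ nor $1^n$ appears for large $n$ — is the real content. The word $1^n = 1\cdots 1$ appears at position $m$ iff $mk\alpha \in \bigcap_{j=0}^{n-1}(I - jk\alpha)$, i.e. iff there is a point of the orbit whose first $n$ iterates under $+k\alpha$ all land in $I$. Since $|I| < 1$, a Kronecker/equidistribution argument (or a direct return-time estimate: because $k\alpha$ is irrational, the orbit of any point equidistributes, so its frequency of visits to $I$ is $|I| < 1$, hence it must leave $I$ within a bounded number of steps depending only on $|I|$) shows that there is $n_0$ such that no length-$n_0$ run of iterates stays in $I$; then for $n\ge n_0$, $1^n \notin L_x(\tau)$. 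The symmetric argument applied to $\T\setminus I$, whose length is also $<1$, rules out $0^n$ for $n$ large. Taking $n$ past the maximum of these two thresholds gives the conclusion.

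The main obstacle I anticipate is the bookkeeping around the degenerate choices of $k$: one must make sure that the chosen $k$ simultaneously (i) keeps all $2n$ endpoints distinct — equivalently $b - a \notin k\alpha\,\Z$, which fails for at most one value of $k$ once we fix the representative of $b-a$ in $[0,1)$ — and, more importantly for the constant-word claim, (ii) makes the "escape time" from $I$ and from $\T\setminus I$ uniform in the starting point. Point (ii) is where equidistribution of $\{mk\alpha\}$ is used; it holds for every $k$ since $k\alpha$ is irrational, so only (i) restricts the choice, and since only one $k$ is forbidden we may pick, say, the smallest admissible $k$. Once $k$ is fixed, everything reduces to the interval-counting already carried out in \Cref{lem:simple_rotation_pattern_Sturmian} together with the elementary observation that an orbit cannot stay inside a proper interval of the circle for arbitrarily many consecutive irrational-rotation steps.
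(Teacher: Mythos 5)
Your argument for the absence of the constant words is fine and is essentially the paper's (uniformly bounded hitting time of $I$ and of $\T\setminus I$ under the minimal rotation by $k\alpha$). The gap is in the count $|L_x(\tau)|=2n$. You assert that because the $2n$ endpoints $\{a-jk\alpha,\,b-jk\alpha\}$ are distinct, the refinement $\bigvee_{j}(\xi-jk\alpha)$ ``has exactly $2n$ atoms, each a half-open interval.'' That inference is invalid: the $2n$ distinct endpoints cut $\T$ into $2n$ arcs, but an atom of the refinement is a \emph{union} of arcs and need not be connected, and distinct arcs can lie in the same atom, in which case the number of atoms (hence of $\tau$-words) drops strictly below $2n$. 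Concretely, if $|I|$ is small and $k\alpha \bmod 1$ is near $1/2$, then $I$ and $I-k\alpha$ are disjoint, the atom $I^c\cap(I^c-k\alpha)$ is a union of two arcs, and for $n=2$ one gets only $3$ words, not $4$. So the lower bound $\ge 2n$ is exactly the part that needs an argument, and your choice of $k$ (ruling out only the endpoint coincidence $b-a\in k\alpha\,\Z+\Z$) does not supply one. Recall also that \Cref{KZthm} only bounds the \emph{maximum} over windows from below; it says nothing about this particular window.

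The missing idea, which is the crux of the paper's proof, is to choose $k$ so that $\theta = k\alpha \bmod 1 < \min\{|I|,\,|\T\setminus I|\}$ (in addition to avoiding the endpoint coincidences). Then consecutive translates $I-j\theta$ and $I-(j+1)\theta$ always overlap, and one shows by induction that every nonempty cell $\bigcap_{j<n}(I_{c_j}-j\theta)$ is connected; only then does ``number of nonempty cells $=$ number of arcs $= 2n$'' go through. (Conveniently, the same smallness of $\theta$ is what makes the gaps in $\{t, t+\theta,\dots,t+(n-1)\theta\}$ eventually smaller than both $|I|$ and $|\T\setminus I|$, giving the no-constant-words conclusion.) One could instead try to rescue your version for large $n$ by invoking the known complexity formula for codings of irrational rotations by an interval whose endpoint difference is not in $\Z\alpha+\Z$, but that is a genuine theorem, not a consequence of counting endpoints, and it is not what you wrote.
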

\begin{proof}


Suppose $I = (a, b), [a, b), (a, b]$, or $[a, b]$. Without loss of generality, assume $0 < b - a \leq 1/2$. Denote the interval $I_1 = I$ and $I_0 = [0, 1) \setminus I$. Choose $k$ so that $\theta = k \alpha \bmod 1$ satisfies 
\begin{equation}\label{eq:theta_min}
    \theta < \min \{|I_0|, |I_1|\}.
\end{equation} 
Additionally, choose $k$ so that 
\begin{equation}\label{eq:no_integer}
    \text{there is no integer $m$ for which $b - a = m\theta$ or $a - b = m \theta \bmod 1$.}
\end{equation} 
(This is possible since if $b - a = m_1 k_1 \theta = m_2 k_2 \alpha \bmod 1$, then since $\alpha$ is irrational, $m_1 k_1 = m_2 k_2$. Then we just need to choose $k$ sufficiently large.)

Let $n$ be large enough so that the gaps between adjacent elements of $\{0, \theta, \ldots, (n-1) \theta\}$ in $\T$ is less than $\min\{|I_0|, |I_1|\}$. Let $\tau$ be the $n$-window $\{0, k, 2k, \ldots, (n-1) k\}$. We will show that $\tau$ satisfies the conclusion of our lemma. 

For every $t \in \T$, 
\[
    (t, t + k \alpha, \ldots, t + (n-1)k \alpha) = (t, t + \theta, \ldots, t + (n-1) \theta). 
\]
By the denseness of $\{0, \theta, \ldots, (n-1) \theta\}$, for every $t \in \T$, at least one of the elements $t, t + \theta, \ldots, t + (n-1) \theta$ belongs to $I_0$ and one belongs to $I_1$. It follows that the constant word $00\ldots0$ and $11 \ldots 1$ do not belong to $L_{x}(\tau)$.

It remains to show that $|L_x(\tau)| = 2n$.
Now if 
\[
    m \theta \in (I_{c_0}) \cap (I_{c_1} - \theta) \cap \cdots \cap (I_{c_{n-1}} - (n-1) \theta),
\]
then 
\[
    x(mk + \tau) = c_0 c_1 \ldots c_{n-1}.
\]
Since $\{m\theta: m \in \N\}$ is dense in $\T$, $|L_x(\tau)|$ is equal to the number of tuple $(c_0, \ldots, c_{n-1})$ so that the intersection 
\begin{equation}\label{eq:intersection}
     (I_{c_0}) \cap (I_{c_1} - \theta) \cap \cdots \cap (I_{c_{n-1}} - (n-1) \theta)
\end{equation}
is nonempty.

By \eqref{eq:theta_min}, each nonempty intersection in \eqref{eq:intersection} is connected. This can be proved by induction on $n$ and using the fact that $I - (n-1) \theta$ and $I - n \theta$ always intersects.

Note that $\mathcal{I} = \{I_0, I_1\}$ is a partition of the circle $\T$.
By \eqref{eq:no_integer}, no two endpoints of $\mathcal{I} - i\theta$ coincide and so the partition 
\begin{equation}\label{eq:partition_big}
    \mathcal{I} \vee (\mathcal{I} - \theta) \vee \cdots \vee (\mathcal{I} - (n-1) \theta)
\end{equation}
where ``$\vee$'' means the common refinement of the partitions consists of exactly $2n$ subintervals. As discussed before, each interval produces a unique $n$ length word in $L_x(\tau)$ and we are done.
\end{proof}

\begin{proposition}\label{circcase}
Let $X$ be a minimal pattern Sturmian subshift and let $G$ be its MEF. If $G$ is the product of a circle and a finite cyclic group, then $G$ is the circle and there is an associated MEF partition with $|B| = 2$.
\end{proposition}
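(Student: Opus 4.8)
The plan is to argue by contradiction. Suppose $G = \T \times \faktor{\Z}{k\Z}$ with $k \geq 2$, with rotation by $(\alpha,1)$. By the discussion preceding the proposition, we may fix an associated MEF partition $\{U_0,U_1,B\}$ with $|B| \in \{1,2\}$. As in the proof of \cref{part1}, split $x$ into the $k$ subsequences $x^{(j)}(n) = x(j+nk)$ for $0 \leq j < k$; each $x^{(j)}$ codes the orbit of $j\alpha$ under rotation by $k\alpha$ via an induced partition $\{A^{(j)}_0, A^{(j)}_1\}$ of $\T$, where the total number of endpoints used across all $j$ is $|B|\leq 2$. So at most two of the circles $\T\times\{j\}$ carry a nontrivial partition, and on each such circle the partition is into finitely many intervals whose endpoints lie in $B$. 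In particular there are at most two nonconstant $x^{(j)}$, and they are circle rotation interval coding sequences for the irrational $k\alpha$.

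The key step is to lower-bound $p^*_x$ using a window built from a well-chosen arithmetic progression with common difference a multiple of $k$. First I would dispose of the degenerate possibilities: if \emph{all} $x^{(j)}$ were constant, $x$ would be periodic, contradicting \cref{KZthm}; and since $X$ is infinite at least one $x^{(j)}$ is nonconstant, hence (being a circle rotation interval coding for $k\alpha$) genuinely takes both values. Now apply \cref{lem:sturmian_no_constant_words} to each nonconstant $x^{(j)}$: there is a common multiplier — take $k' = k\cdot(\text{lcm of the }k\text{'s from the lemma})$, so that $k'$ is a multiple of $k$ — and windows $\tau^{(j)} = \{0, k', 2k', \ldots, (n-1)k'\}$ (as progressions in the index set of $x^{(j)}$) such that $|L_{x^{(j)}}(\tau^{(j)})| = 2n$ and neither constant word $0^n$ nor $1^n$ lies in $L_{x^{(j)}}(\tau^{(j)})$. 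Transported back to $x$, these become windows $\tau + \{j\}$ inside a single residue class $j \bmod k$, of the form $j + k'\{0,\ldots,n-1\}$.

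The heart of the argument is now a superposition estimate. Choose $j_1 \neq j_2$ two distinct residues with $x^{(j_1)}$ nonconstant (this exists since $k\geq 2$, provided we also verify the case where only one $x^{(j)}$ is nonconstant — in that case $x$ is, up to the periodic interleaving, a constant interleaved with a rotation coding, and one checks directly using a window spanning two residue classes that the ``constant'' class forces extra pattern complexity, or more cleanly: a periodic-$k$ interleaving of a constant sequence with a single rotation coding already has maximal pattern complexity $\geq 2n + (k-1)$ for suitable windows, via the lemma's non-constant-words property combined with the other residue classes contributing a fixed letter, which one shows strictly exceeds $2n$). Form the window $\rho_n = \big(j_1 + k'\{0,\dots,n-1\}\big) \cup \big(j_2 + k'\{0,\dots,n-1\}\big)$, which has size $2n$ and is a disjoint union lying in two distinct residue classes mod $k$ (here we use that $k'$ is a multiple of $k$ so each progression stays in one class). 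Because the dynamics on the two circles $\T\times\{j_1\}$ and $\T\times\{j_2\}$ are driven by the \emph{same} rotation phase relationship but the intervals may differ, and because neither subword is constant, I would count the number of realized $\rho_n$-patterns: each is a concatenation $w^{(1)} w^{(2)}$ with $w^{(i)} \in L_{x^{(i)}}(\tau^{(i)})$, and the constraint linking them is that the underlying point of $\T$ is common up to the fixed shift $(j_2 - j_1)\alpha$. A careful count of nonempty cells in the common refinement of the $2n$ translated partitions on $\T$ (now $2n$ intervals from the two circles combined, but with the no-coincidence condition \eqref{eq:no_integer} extended to cover cross terms $j_2\alpha - j_1\alpha$) gives strictly more than $2n$ realized patterns — the gain comes precisely from the fact that both $0^n$ and $1^n$ are excluded on each circle, so the ``link'' cannot be collapsed. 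This yields $p^*_x(2n) \geq 2n + 1$ for large $n$, contradicting pattern Sturmianity.

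The main obstacle I anticipate is making the final superposition count rigorous: one must show that the common refinement on $\T$ of the $2n$ interval-partitions coming from the two residue classes has \emph{at least} $2n+1$ nonempty cells realized as subwords. The cleanest route is to note that the $n$ partitions from class $j_1$ alone already give $2n$ cells (by \cref{lem:sturmian_no_constant_words}), and then argue that inserting even one additional interval-partition from class $j_2$ must split at least one existing cell — which it does unless its endpoints coincide with existing ones, excluded by the extended no-coincidence condition \eqref{eq:no_integer}. Thus $|L_x(\rho_n)| \geq 2n+1$, and pattern Sturmianity forces $k=1$; the conclusion $|B|=2$ then follows since with $k=1$ and $G=\T$ the boundary must consist of interval endpoints in equal numbers from $U_0$ and $U_1$, and $|B|=1$ is impossible as a single interval endpoint cannot separate $\T$ into two nonempty open pieces whose closures cover it — so $|B| \geq 2$, and by $|B|\leq 2$ we get $|B|=2$.
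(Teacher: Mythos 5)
There is a genuine gap, and it stems from a missed structural observation that would have collapsed most of your argument. Each slice $\T \times \{j\}$ is \emph{clopen} in $G = \T \times \faktor{\Z}{k\Z}$, so the boundary of $U_i$ intersected with that slice is exactly the boundary of $U_i \cap (\T \times \{j\})$ computed inside that circle. Hence any circle on which both $U_0$ and $U_1$ are nonempty must contain at least \emph{two} points of $B$ (one point cannot disconnect a circle). Since $|B| \leq 2$, exactly one residue class $j_0$ carries a nontrivial partition and every other $x^{(j)}$ is constant. Your ``superposition'' case with two nonconstant classes $j_1 \neq j_2$ is therefore vacuous — and it is precisely the place where you invest most of your effort and where you yourself flag that the count of realized cells in the common refinement is not rigorous (it is also not clear that the window $\rho_n$ of size $2n$ would realize at least $2n+1$ patterns just because one partition ``splits a cell'': splitting a cell of the refinement on $\T$ does not automatically produce a new \emph{realized} word unless you track which cells are visited by the orbit, and your extended no-coincidence condition is never actually set up). Separately, your construction of a common multiplier $k'$ as ``$k$ times the lcm of the lemma's $k$'s'' does not preserve the hypotheses of \cref{lem:sturmian_no_constant_words}, since that lemma requires $k'\alpha \bmod 1$ to be smaller than both interval lengths, a property not inherited by multiples; one must instead re-choose $k'$ among multiples of the group order directly.

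The case you relegate to a parenthetical — one nonconstant class interleaved with constants — is the \emph{only} case, and it is where the paper's actual proof lives. The clean argument is: apply \cref{lem:sturmian_no_constant_words} to $x^{(j_0)}$ to get an $n$-window $\tau$ with $|L_{x^{(j_0)}}(\tau)| = 2n$ and with neither constant word present; the window $k\tau$ shifted within the residue class $j_0$ realizes these $2n$ nonconstant words in $x$, while a single further shift into a constant residue class realizes the constant word $1^n$ (or $0^n$), which is new, giving $p^*_x(n) \geq 2n+1$ and the desired contradiction. (Your claimed bound $2n + (k-1)$ is also off: the $k-1$ constant classes can contribute at most two distinct constant words in total, though $2n+1$ suffices anyway.) Your final step deducing $|B| = 2$ once $G = \T$ is correct. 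So the right idea is present in your writeup, but the proof as structured proves the vacuous case incompletely and only gestures at the case that actually needs proving.
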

\begin{proof}
It is immediate that the associated partition $G = U_0 \cup U_1 \cup B$ must have $|B| = 2$ since a circle can only be disconnected by removing at least two points. The only case which must be ruled out is 
$G = \mathbb{T} \times \faktor{\Z}{k\Z}$ for some $k \geq 2$. Assume for a contradiction
that $G$ is of this form, and without loss of generality that $B = \{(a, 0), (b, 0)\}$ for some $a \neq b \in \mathbb{T}$ and $\mathbb{T} \times \{1\} \subseteq U_1$.

Suppose the rotation on $G$ is $(\alpha, 1)$ where $\alpha \in \T$ is irrational and $1 \in \faktor{\Z}{k\Z}$. The subsequence $x' = (x(kn))_{n \in \mathbb{Z}}$ is then a coding of the rotation by $\alpha$ on the circle $\T \times \{0\}$. By \cref{lem:sturmian_no_constant_words}, there exists $n$ and an $n$-window $\tau$ such that $|L_{x'}(\tau)| = 2n$ and $L_{x'}(\tau)$ does not contain the constant words $00 \ldots 0$ and $11 \ldots 1$. 

Now, consider the window $\tau' = k\tau$. By considering shifts by multiples of $k$, we have $L_{x'}(\tau) \subseteq L_x(\tau')$. In addition, since all coordinate in the shifted window $\tau' + 1$ is congruent to $1$ mod $k$, $x(\tau' + 1)$ is the constant word $11 \ldots 1$. It follows that $|L_x(\tau')| \geq 2n + 1$, contradicting the assumption that $x$ is pattern Sturmian.
\end{proof}

We now approach the case of $G$ an odometer, which is more difficult since it can be disconnected by boundary sets of cardinality $1$ or $2$. We will, however, prove that if $X$ is pattern Sturmian, then there must exist an MEF partition with $|B| = 1$, which implies that $X$ is a $1$-hole Toeplitz by \Cref{mhole}. Then, \Cref{onehole} implies that $X$ is nearly simple Toeplitz. 



We first need the following lemma about an explicit maximal $3$-window for simple $1$-hole Toeplitz sequences.


\begin{lemma}\label{1hole3lang}
If $x$ is a simple $1$-hole Toeplitz defined by odometer with period structure $(n_k)$ and sequence $a_k \in \{0,1\}$ 
(meaning that all residue classes mod $n_k$ except one are filled with the letter $a_k$), and 
$c < d < e < f < g < h$ are chosen with $a_c = a_e = a_g = 0$ and $a_d = a_f = a_h = 1$, then the window 
$\tau = \{0, n_e, n_f\}$ is maximal, and 
\[
L_x(\tau) = \{000, 001, 010, 100, 101, 111\}.
\]
\end{lemma}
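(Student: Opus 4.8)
The strategy is to work directly with the odometer $\mathcal{O} = \varprojlim \Z/n_k\Z$ and the MEF partition from \Cref{toeMEF}. Recall that for a simple $1$-hole Toeplitz sequence, at step $k$ all residue classes mod $n_k$ carry the letter $a_k$ except one, and the single ``hole'' class at step $k$ is the union of the holes at step $k+1$; the boundary $B = \{b\}$ is the unique odometer element lying in every hole. Coding the orbit of $0$, we have $x(n) = a_k$ whenever $n$ lands in a filled class mod $n_k$, i.e. whenever $n \bmod n_k$ is not the hole. So to compute $L_x(\tau)$ for $\tau = \{0, n_e, n_f\}$ I will translate the three-letter-word question into a question about which triples $(u, u+n_e, u+n_f)$ of odometer elements can simultaneously avoid, or be forced into, the relevant hole classes as $u$ ranges over the (dense) orbit $\{n\cdot 1 : n \in \N_0\}$; by minimality and continuity it suffices to let $u$ range over all of $\mathcal{O}$ and track whether each of $u, u+n_e, u+n_f$ is the boundary point $b$ or, if not, what letter the partition assigns.

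First I would record the key arithmetic fact: since $n_e \mid n_f$ and $n_e$ is a proper multiple of all earlier $n_j$, adding $n_e$ or $n_f$ to an odometer element $u$ changes none of the coordinates $u(j)$ for $j \le e$ (resp. $j \le f$). Consequently $u$, $u+n_e$, $u+n_f$ all agree in coordinates up through level $e$. I then split into cases according to whether $u$ is already ``determined'' at some level $\le e$ — i.e. whether the common initial segment of $u$ (and hence of all three points) already sits in a filled class at some level $j \le e$. If it does, and the first level at which it is filled is $j$, then all three points carry the letter $a_j$, giving a constant word; the smallest such possibility (using $a_c = 0$, $a_d = 1$ with $c,d < e$) shows that both $000$ and $111$ occur. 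This already exhibits the two constant words.

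Next, the interesting case: $u$ is in the hole at every level $\le e$, so $u$ ``looks like $b$'' through level $e$, and the three coordinates at level $e$ of $u$, $u+n_e$, $u+n_f$ coincide with the hole value at level $e$. Now at level $e+1$: because $n_e$ is exactly the step-$e$ modulus and $a_e = 0$, translating by $n_e$ moves the level-$(e{+}1)$ coordinate; I will check that exactly one of $u, u+n_e$ is forced out of the hole at level $e+1$ into the $a_e = 0$ class while the other is not — this is precisely the statement that a single $n_e$-shift crosses the hole once. Iterating the same bookkeeping one more level up using $n_f$ (with $a_f = 1$) and then using $a_g = 0$, $a_h = 1$ at still higher levels to fill in whichever coordinates are still undetermined, I will enumerate all attainable patterns of ``(letter or still-in-hole)'' for the triple and read off the resulting words. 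The claim is that the non-constant words one gets are exactly $001, 010, 100, 101$, and in particular $011$ and $110$ never appear — the latter being the crux, since it encodes the fact that you cannot have the pattern $0$ then $1$ then $1$ at spacings $n_e, n_f$ for a simple $1$-hole Toeplitz with these sign choices. I would verify $011, 110 \notin L_x(\tau)$ by showing that any $u$ realizing a $1$ at position $n_f$ must already have been pushed out of the hole at a level where $u+n_e$ is still in the hole with the opposite forced letter, forcing position $n_e$ to be $0$ only when position $0$ is also $0$, etc.

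The main obstacle I anticipate is the careful level-by-level case analysis in the second case: keeping straight, for each of the three odometer points, the first level at which it leaves the hole and which letter that level contributes, and confirming that the divisibility relations $n_e \mid n_f$ together with the alternation $a_c=a_e=a_g=0$, $a_d=a_f=a_h=1$ genuinely rule out $011$ and $110$ while allowing the other six words. This is essentially a finite but fiddly combinatorial verification; once the attainable triples are enumerated, checking $|L_x(\tau)| = 6$ and hence (with $p^*_x(3) \le 2\cdot 3 = 6$ from the pattern Sturmian hypothesis, or directly) that $\tau$ is a maximal window is immediate. I would also double-check that density of the orbit $\{n\cdot 1\}$ in $\mathcal{O}$ legitimately lets me replace ``$u$ in the orbit'' by ``$u \in \mathcal{O}$'' — this is exactly the content of \Cref{gencoding} applied to this Toeplitz subshift, so no new work is needed there.
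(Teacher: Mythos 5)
Your plan is workable but takes a genuinely different, and considerably harder, route than the paper. The paper's proof never performs the ``fiddly'' enumeration you anticipate: it simply normalizes so that the hole is the residue $0 \bmod n_k$ for all $k \leq h$, observes that any integer of the form $\pm n_{i_0} \pm n_{i_1} \pm \cdots \pm n_{i_j}$ with $i_0 < i_1 < \cdots < i_j \leq h$ satisfies $x(m) = a_{i_0}$ (it is a multiple of $n_{i_0}$ but not of $n_{i_0+1}$), and then exhibits the six claimed words via six explicit shifts of $\tau$, namely by $n_c$, $n_d$, $n_f - n_e$, $n_g$, $n_g - n_f$, and $n_h - n_f$. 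The exclusion of $011$ and $110$ --- the step you correctly identify as the crux of your approach --- is then obtained for free by citing the already-known fact that simple Toeplitz sequences are pattern Sturmian, so $p^*_x(3) = 6$ and the six exhibited words exhaust $L_x(\tau)$. You mention this shortcut only parenthetically at the end; making it the main mechanism collapses most of your case analysis. Your direct odometer enumeration does go through (I checked: the only non-constant words reachable by a point that stays in the hole through level $e$ are $001$, $010$, $100$, $101$), and it has the merit of not relying on the external pattern Sturmian input, but one step of your bookkeeping is stated incorrectly: it is not true that ``exactly one of $u, u+n_e$ is forced out of the hole at level $e+1$.'' Since the hole mod $n_{e+1}$ is a single residue class and $u$, $u+n_e$ occupy two distinct residues there, \emph{at most} one of them remains in the hole; when $n_{e+1}/n_e \geq 3$ it can easily happen that both are forced out (yielding the constant word $000$). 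Your enumeration must include that case, and analogously at level $f+1$. With that correction, and with the explicit use of the levels $c,d,g,h$ to realize all six words (needed for the lower bound $|L_x(\tau)| \geq 6$ and hence maximality), your argument would be complete.
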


\begin{proof}
Assume that $x$ is such a sequence and that $c < d < e < f < g < h$ are chosen as above. 
By shifting $x$, we may assume that $0$ is the nonconstant residue class $\pmod{n_k}$ for $k \leq h$. In other words, making the notation $x(i, n) := x(i + n \mathbb{N})$, for all $k \leq h$, $x(i, n_k)$ is a constant sequence of $a_k$ unless $i = 0$. Phrased slightly differently, for $k \leq h$, if $m \in \mathbb{N}$ is a multiple of $n_k$ but not 
$n_{k+1}$, then $x(m) = a_k$. We note that this implies that for any $m = \pm n_{i_0} \pm n_{i_1} \pm \cdots \pm n_{i_j}$ for 
$i_0 < i_1 < \cdots < i_j \leq h$, $x(m) = a_{i_0}$.
 
Define the window $\tau = \{0, n_e, n_f\}$; we exhibit the claimed words via shifts of $\tau$.

\begin{itemize}
\item $x(\tau + n_c) = x(n_c, n_c + n_e, n_c + n_f) = a_c a_c a_c = 000$.
\item $x(\tau + n_d) = x(n_d, n_d + n_e, n_d + n_f) = a_d a_d a_d = 111$.
\item $x(\tau + n_f - n_e) = x(-n_e + n_f, n_f, -n_e + 2n_f) = a_e a_f a_e = 010$.
\item $x(\tau + n_g) = x(n_g, n_e + n_g, n_f + n_g) = a_g a_e a_f = 001$.
\item $x(\tau + n_g - n_f) = x(-n_f + n_g, n_e - n_f + n_g, n_g) = a_f a_e a_g = 100$.
\item $x(\tau + n_h - n_f) = x(-n_f + n_h, n_e - n_f + n_h, n_h) = a_f a_e a_h = 101$.
\end{itemize}

Since simple Toeplitz sequences are pattern Sturmian, $p^*_x(3) = 6$ and so these are all of the words in $L_x(\tau)$.
\end{proof}

\begin{proposition}\label{b1}
If $X$ is minimal pattern Sturmian with MEF an odometer, then there exists an associated MEF partition with $|B| = 1$.
\end{proposition}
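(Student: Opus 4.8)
The plan is to prove the contrapositive-flavored statement: if $X$ is a pattern Sturmian subshift whose MEF is the odometer $\mathcal{O} = \varprojlim \Z/n_k\Z$, then among all associated MEF partitions there is one with $|B| = 1$. We know from \Cref{grpcor} together with \Cref{lem:boundary_non_empty} that any associated MEF partition has $|B| \in \{1, 2\}$, so the only case to rule out is that \emph{every} associated MEF partition has $|B| = 2$. Assume for contradiction this is so, and fix one such partition $\{U_0, U_1, B\}$ with $B = \{b_1, b_2\}$.

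The first step is to understand the structure of $X$ in this situation. By \Cref{mhole}, $X$ is a $2$-hole Toeplitz subshift. Coding the orbit of the identity $0 \in \mathcal{O}$ by $\{U_0,U_1,B\}$ yields a $2$-hole Toeplitz sequence $x_0 \in X$ with $X = \overline{\OO(x_0)}$; at each level $k$ (large enough), exactly two residue classes mod $n_k$ are nonconstant, and the other $n_k - 2$ classes carry a constant value, say $a_{k,i}$ for the $i$-th constant class. The idea is then to pass to a $1$-hole ``sub-structure'' by forgetting one of the two holes. Concretely, I would consider the even/odd-type splitting suggested by the two boundary points: the two points $b_1, b_2 \in B$ have coordinate sequences $(b_j(k))_k$, and one wants to isolate the coding sequence restricted to one of the two families of arithmetic progressions that limit onto $b_1$ versus $b_2$. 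A cleaner route may be to directly build a new clopen MEF partition: since a $2$-hole Toeplitz subshift on an odometer is an almost $1$-$1$ extension, and the two holes can at some level $k$ be ``separated'' — the two nonconstant classes mod $n_k$ are distinct residues — one can at the cost of refining the period structure arrange coordinates so that the hole-structure decomposes, and then either recolor one hole to merge it into a constant class or peel off a subsequence that is $1$-hole Toeplitz.

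Once I have a $1$-hole Toeplitz piece inside the picture, the next step is to derive a complexity contradiction, and this is where \Cref{1hole3lang} enters. If the reduction genuinely produces a simple $1$-hole Toeplitz structure — or more generally a $1$-hole Toeplitz $y$ with the property that its sign sequence $(a_k)$ changes value infinitely often so that the indices $c<d<e<f<g<h$ with the required alternating pattern exist — then for the window $\tau = \{0, n_e, n_f\}$ we get $|L_y(\tau)| = 6$ from \Cref{1hole3lang}. The point is to pull this window back to a window for $x$ (via multiplication by the appropriate period, as in the proof of \Cref{circcase}) and to exhibit an \emph{extra} $3$-window word forced by the second hole — analogously to how in \Cref{circcase} the extra coordinate congruent to $1 \bmod k$ produced the constant word $11\ldots1$ not already present. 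Here the second hole (the one we forgot) supplies an additional pattern at level $n_k$ that cannot coincide with any of the six patterns realized by the first hole, pushing $|L_x(\tau')| \geq 7$ against the linear bound $p^*_x(3) = 6$, contradiction.

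The main obstacle, and the step requiring the most care, is the reduction: showing that a $2$-hole Toeplitz subshift with MEF an odometer and \emph{no} $1$-hole MEF partition nonetheless contains (after passing to subsequences / recoloring / refining the period structure) a genuine $1$-hole Toeplitz structure to which \Cref{1hole3lang} applies, together with the bookkeeping that the two holes can be made to interact at a common scale $n_k$ so that the ``$7$th word'' is actually produced. One has to be careful that the two holes might be ``linked'' — their residue classes mod $n_k$ might move together in a correlated way as $k$ grows — and rule out that this linkage collapses the extra word back onto one of the six; this is handled by choosing the indices $c<d<\cdots<h$ so that the alternation is strict and the relevant shifted positions $m = \pm n_{i_0} \pm \cdots \pm n_{i_j}$ land in the constant region for both holes simultaneously, using that at those scales only finitely many coordinates are constrained. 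If the linkage genuinely cannot be broken, that itself should force the two holes to be governed by a single $1$-hole datum, which is exactly the desired conclusion that a $|B|=1$ partition exists.
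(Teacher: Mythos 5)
Your outline follows the same broad strategy as the paper's proof (reduce to the case $|B|=2$, view $X$ as a $2$-hole Toeplitz subshift, extract the two $1$-hole subsequences along the residue classes of the two holes, and play \Cref{1hole3lang} against the pattern Sturmian bound $p^*_x(3)=6$), but as written it has a genuine gap at exactly the point you flag as "the main obstacle." Two things are missing. First, to apply \Cref{1hole3lang} you need the peeled-off subsequences $x'=x(i_0,n_0)$ and $x''=x(j_0,n_0)$ to be \emph{simple} $1$-hole Toeplitz sequences; this is not automatic and requires observing that each of $x',x''$ is itself pattern Sturmian (its language embeds into $L_x$ along the window $n_0\tau$) and then invoking \Cref{onehole} to conclude each is nearly simple, followed by a further truncation of the period structure to make them simple. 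Second, and more seriously, your claim that the second hole "supplies an additional pattern that cannot coincide with any of the six patterns realized by the first hole" is false in general: when the two holes carry the same letter sequences $(a_k)=(b_k)$ and sit at antipodal residues $j_k-i_k=n_k/2$, the second hole reproduces exactly the same six words and no seventh word appears --- and indeed in that case the correct conclusion is that $x$ is really a $1$-hole Toeplitz with period structure $(n_k/2)$, so a partition with $|B|=1$ exists. You correctly sense this dichotomy ("if the linkage genuinely cannot be broken, that itself should force..."), but the entire content of the proposition lives in proving it, and your proposal replaces that proof with "should force."

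Concretely, the argument you still need is the following. After reducing to $(a_k)=(b_k)$ (the case where they differ infinitely often is handled by applying \Cref{1hole3lang} to $x'$ and to the bit flip of $x''$ and observing that the union of the two $3$-languages exceeds six words), one shows: if for some $k$ there is an $r\in(0,n_k)$, $r\neq\pm(j_k-i_k)$, with $x(i_k+r,n_k)$ and $x(j_k+r,n_k)$ constant with \emph{different} values, then taking a maximal $2$-window $\tau$ for the simple Toeplitz $x(i_k,n_k)$ and appending the coordinate $r$ produces $4+4=8$ words in a $3$-window of $x$, contradicting pattern Sturmian. Ruling out the existence of such an $r$ then forces, via an elementary argument with $\gcd(j_0,n_0)=1$ and a short case analysis on the first letters $a_1,a_2$, that $j_k-i_k=n_k/2$ for all $k$ and that $x$ is constant on every residue class mod $n_k/2$ except one --- which is precisely the $|B|=1$ partition. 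Without this step (or some substitute for it), the proposal does not establish the proposition; it only establishes the easy half of the dichotomy and conjectures the hard half.
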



\begin{proof}
Assume that $X$ is minimal and pattern Sturmian, with the MEF being the odometer $\mathcal{O} = \lim_{\longleftarrow} \faktor{\Z}{n_k \Z}$. Since $X$ is an almost $1$-$1$ extension of $\mathcal{O}$, it is a Toeplitz subshift with period structure $(n_k)$.
By \Cref{main} and \Cref{lem:boundary_non_empty}, $(X, \sigma)$ has an associated MEF partition with boundary set $B$ of cardinality $1$ or $2$.


If $|B| = 1$, the proof is complete. So the remaining case is $|B| = 2$, say $B = \{(i_k), (j_k)\} \subseteq \mathcal{O}$. Without loss of generality, we assume that $i_0 \neq j_0$ by truncating $(n_k)$ if necessary. We also note that $(i_k), (j_k)$ are not in the orbit of $0$ in $\mathcal{O}$ by definition of MEF partition, and so $i_k, j_k \rightarrow \infty$.
Then, the coding sequence $x$ from \Cref{main} is a $2$-hole Toeplitz; for each $k$ and $0 \leq i < n_k$, $x(i, n_k) := x(i + n_k \mathbb{N}_0)$ is constant if and only if $i \notin \{i_k, j_k\}$. 

Now, $x' = x(i_0, n_0)$ and $x'' = x(j_0, n_0)$ are both $1$-hole Toeplitz sequences with period structure $(n_k/n_0)$, and for any window $\tau$, $L_x(n_0 \tau)$ contains $L_{x'}(\tau) \cup L_{x''}(\tau)$. This immediately implies that both $x'$ and $x''$ are pattern Sturmian, and so by \Cref{onehole}, each is either a simple Toeplitz or has a decomposition into residue classes where one residue class is a simple Toeplitz and other residues are constant. 
In fact, the proof of \Cref{onehole} in \cite{Gjini_Kamae_Bo_Yu-Mei-toeplitz} 
shows that this decomposition can always be taken modulo the first period in the period structure, which for $x, x''$ is $n_1/n_0$.
Therefore, by truncating the first term from $(n_k)$, we may assume without loss of generality that $x'$ and $x''$ are simple.

Say that $x'$ and $x''$ are defined by the sequences of letters $(a_k), (b_k)$ respectively. If there exist infinitely many $k$ for which $a_k \neq b_k$, then we may assume without loss of generality that there are infinitely many $e$ for which $a_e = 0$ and $b_e = 1$. Since each sequence takes values $0,1$ infinitely often, by taking $e$ large enough we may then choose 
$c < c' < d < d' < e < f < g < g' < h < h'$ so that
\begin{itemize}
\item $a_c = a_e = a_g = 0$ 
\item $a_d = a_f = a_h = 1$
\item $b_{c'} = b_e = b_{g'} = 1$
\item $b_{d'} = b_{h'} = 0$.
\end{itemize}
Then, we define $\tau = \{0, n_e/n_0, n_f/n_0\}$ (recalling that the period structure for $x'$ and $x''$ is $(n_k/n_0)$). By applying \Cref{1hole3lang} to $x'$ and $c < d < e < f < g < h$, we get 
$L_{x'}(\tau) = \{000, 111, 001, 010, 100, 101\}$. If $b_f = 0$, then applying \Cref{1hole3lang} to the bit flip of $x''$
and $c' < d' < e < f < g' < h'$ yields $L_{x''}(\tau) = \{000, 111, 110, 101, 011, 010\}$. 
Then $L_x(n_0 \tau) \supseteq L_{x'}(\tau) \cup L_{x''}(\tau) = \{0,1\}^3$, and so $p^*_x(3) = 8$, a contradiction.
If instead $b_f = 1$, then the same argument from the last bullet point in the proof of \Cref{1hole3lang} shows that
$L_{x''}(\tau)$ contains $b_f b_e b_{h'} = 110$. Then again $L_x(n_0 \tau) \supseteq L_{x'}(\tau) \cup L_{x''}(\tau)$ has size greater than $6$, a contradiction.

We may therefore assume that $a_k = b_k$ for sufficiently large $k$, and by passing to a subsequence of $(n_k)$ if necessary, that $a_k = b_k$ for all $k$ and alternates between $0$ and $1$, i.e. either $a_k = b_k = k \pmod 2$ for all $k$ or $a_k = b_k = k + 1\pmod 2$ for all $k$.


We assume for a contradiction that there exists $k$ where $j_k - i_k \neq n_k/2$. Without loss of generality, we can then assume (by shifting, truncating $(n_k)$, and possibly switching $i_k$ and $j_k$) that $i_0 = i_1 = i_2 = 0$ and $j_0 < n_0/2$.
If we define $g = \gcd(j_0, n_0) \leq j_0 < n_0/2$, then the sequence $y := x(0, g)$ is still a $2$-hole Toeplitz sequence, with period structure $(n_k/g)$ and two nonconstant residue classes $0, j_k/g$ for each $k$. 
In addition, $y$ is nonperiodic and $L_x(g\tau) \supseteq L_y(\tau)$ for all $\tau$, so $y$ is pattern Sturmian. We may then replace 
$x, (n_k), (j_k)$ with $y, (n_k/g), (j_k/g)$, and so assume without loss of generality that $g = \gcd(j_0, n_0) = 1$.

The key to the rest of our proof is the following observation: if, for any $k$, there exists $r \in (0, n_k), r \neq \pm(j_k - i_k)$, so that $x(i_k + r, n_k) = a^{\N_0}$ and $x(j_k + r, n_k) = \overline{a}^{\N_0}$ are different constant sequences, then $x$ is not pattern Sturmian, yielding a contradiction. To see this, we first note that $x'_k := x(i_k, n_k)$ and $x''_k := x(j_k, n_k)$ are simple $1$-hole Toeplitzes with the same period structure and generating letters, and so have the same language. We may then consider a maximal $2$-window $\tau$ for $x'_k$ and $x''_k$, meaning that $|L_{x'_k}(\tau)| = |L_{x''_k}(\tau)| = 4$. Now, if $r$ as above exists, define the window $\tau' := n_k \tau + r$. By considering shifts in $n_k \mathbb{N}_0 + i_k$, we see that $L_x(\tau')$ contains all words in $L_{x'_k}(\tau)$ followed by $a$, and by considering shifts in $n_k \mathbb{N}_0 + j_k$, we see that $L_x(\tau')$ contains all words in $L_{x''_k}(\tau)$ followed by $\overline{a}$. Therefore, 
$|L_x(\tau')| = 8$, contradicting the fact that $x$ is pattern Sturmian.

Therefore, for all $k$, such $r$ does not exist. Since $0 < j_0 < n_0/2$, we know $n_0 > 2$ and $2j_0 \notin \{0,j_0\} \pmod{n_0}$. Therefore, $x(2j_0, n_0)$ is a constant sequence, say, without loss of generality, of all $0$s. For any $2 < m < n_0$, since $\gcd(j_0, n_0) = 1$, 
$(m-1)j_0 \not\equiv \pm j_0 \pmod{n_0}$, and so by taking $r = (m-1)j_0$, we see that $x(i_0 + r, n_0) = x((m-1)j_0, n_0)$ and 
$x(j_0 + r, n_0) = x(mj_0, n_0)$ are the same constant sequence, i.e. all $0$s. Therefore, since $\gcd(j_0, n_0) = 1$, all residue classes $\pmod{n_0}$ except $0$ and $j_0$ are $0$s.

If $a_1 = b_1 = 1$, then all $x(m, n_1)$ with $m \in \{0, j_0\} \pmod{n_0}$ but $m \notin \{0, j_1\} \pmod{n_1}$ are constant sequences of $1$s. We then define $r = n_0 - j_1$. First, $x(r + i_1, n_1) = x(n_0 - j_1, n_1)$ is all $0$s since $n_0 - j_1 \equiv -j_0 \notin \{0, j_0\} \pmod{n_0}$. And $x(r + j_1, n_1) = x(n_0, n_1)$ is all $1$s since $n_0 \equiv 0 \pmod{n_0}$ but $n_0 \notin \{0, j_1\} \pmod{n_1}$. This again yields a contradiction.

Finally, if $a_1 = b_1 = 0$, then $a_2 = b_2 = 1$, and by definition all $x(m, n_2)$ with $m \notin \{0, j_1\} \pmod{n_1}$ are all $0$s and all $x(m,n_2)$ with $m \in \{0, j_1\} \pmod{n_1}$ but $m \notin \{0, j_2\} \pmod{n_2}$ are constant sequences of $1$s.
We then define $r = n_1 - j_2$. First, $x(r + i_2, n_2) = x(n_1 - j_2, n_2)$ is all $0$s since $n_1 - j_2 \equiv -j_0 \notin \{0, j_0\} \pmod{n_0}$. And $x(r + j_2, n_2) = x(n_1, n_2)$ is all $1$s since $n_1 \equiv 0 \pmod{n_1}$ but $n_1 \notin \{0, j_2\} \pmod{n_2}$. This yields our final contradiction, meaning that our original assumption that $j_k - i_k \neq n_k/2$ for some $k$ is false.

Therefore, $j_k - i_k = n_k/2$ for all $k$. In addition, for each $0 < r < n_k/2$, $x(i_k + r, n_k)$ and $x(j_k + r, n_k)$ must be the same constant sequence, meaning that in fact $x$ is constant on all residue classes $\pmod{n_k/2}$ except $i_k$ for all $k$. Therefore, in fact $x$ is a $1$-hole Toeplitz with period structure $(n_k/2)$ and nonconstant residue classes $(i_k)$. This means that $x$ comes from coding the MEF partition of the associated odometer $\mathcal{O}'$ where $U_i$ consists of the disjoint union of all clopen sets coming from residues $m \pmod{n_k}$ for which $x(m, n_k)$ is a constant sequence of all $i$s. This partition has $B = \{(i_k)\}$, and since $|B| = 1$, the proof is complete.
\end{proof}




\begin{proof}[Proof of \cref{mainthm:sturmian}]

If $x$ is a recurrent pattern Sturmian sequence, then by \cref{mainthm:recnotmin}, \cref{thm:possible_MEF_nonsuperlinear}, and \cref{circcase}, $X = \overline{\OO(x)}$ is minimal and its MEF is either the circle or an odometer. If the MEF is the circle, then \Cref{gencoding} and \Cref{circcase} together imply that $x$ is a simple circle rotation coding sequence. If the MEF is an odometer, then $X$ is Toeplitz and by Theorems \ref{onehole}, \ref{mhole}, and \ref{b1}, $X$ is an nearly simple $1$-hole Toeplitz.  
\end{proof}

\section{Pattern Sturmian: nonrecurrent case and proof of \texorpdfstring{\cref{mainthm:sturmian-nonrecurrent}}{Theorem B}}

Finally, we wish to characterize nonrecurrent pattern Sturmian sequences. Suppose that $x$ is such a sequence. Then it is not uniformly recurrent, but its orbit closure $X$ contains a uniformly recurrent sequence $y$, which must itself be periodic or pattern Sturmian since $p^*_y(n) \leq p^*_x(n) = 2n$. 
If $y$ is pattern Sturmian, then by \cref{mainthm:sturmian}, it is either a simple circle rotation coding sequence or it is in a nearly simple Toeplitz subshift, in which case we can assume without loss of generality that $y$ is a nearly simple Toeplitz sequence. Our proof of \cref{mainthm:sturmian-nonrecurrent} will consist of three pieces: 

\begin{enumerate}
    \item $y$ cannot be a nearly simple Toeplitz sequence;
    
    \item if $y$ is a simple circle rotation coding sequence, then $x$ is a nonrecurrent simple circle rotation coding sequence;
    
    \item if $y$ is periodic, then it is constant and $x$ is almost constant.
\end{enumerate}

We begin with the first result.

\begin{theorem}\label{thm:contain_minimal_toeplitz_cannot}
    If a pattern Sturmian sequence $x$ has orbit closure $X$ containing $y$ which is nearly simple Toeplitz, then $x$ is recurrent.
\end{theorem}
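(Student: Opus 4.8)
The plan is to argue by contradiction: suppose $x$ is nonrecurrent, yet $X = \overline{\OO(x)}$ contains a nearly simple Toeplitz sequence $y$. The strategy is to exploit the rigid arithmetic structure of $y$ (and of the Toeplitz subshift $Y = \overline{\OO(y)}$ it generates) together with the nonrecurrence of $x$ to manufacture a window $\tau$ with $|L_x(\tau)| > 2|\tau|$, contradicting the pattern Sturmian hypothesis $p^*_x(n) = 2n$. Concretely, since $y$ is nearly simple Toeplitz, by the remark following \Cref{onehole} we may write $y$ as a shift of $\theta z$ for a simple Toeplitz $z$ and a constant-length-$C$ substitution $\theta: 0 \mapsto w0, 1 \mapsto w1$, so that all but one of the subsequences $y^{(i)}(n) = y(i + Cn)$ are constant and the remaining one is simple Toeplitz. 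The subshift $Y$ has an odometer MEF with an MEF partition having $|B| = 1$, as in \Cref{toeMEF} and \Cref{mhole}, and elements of $Y$ come from coding the orbit of odometer points as in \Cref{gencoding}.

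The key tension to exploit is the following. Because $x$ is nonrecurrent, its initial block $x(0)\ldots x(L-1)$ occurs only finitely often in $x$ for some $L$; equivalently, $x$ is a "one-ended" sequence whose orbit closure splits as $X = \OO(x) \cup Y'$ (roughly) where $Y'$ is a union of minimal pieces. But $Y \subseteq X$ is an entire minimal Toeplitz subshift, and every element of $Y$ — in particular sequences that begin with arbitrarily long blocks that are "atypical" in the sense of having a hole at position $0$ — is a limit of shifts of $x$. I would use the very fine periodic structure of the Toeplitz system: for each $k$, the period-$n_k$ arithmetic progressions of $y$ are almost all constant, with exactly one hole; so for a shift of $y$ whose hole is placed at position $0$, the pattern on a window $\tau = \{0, n_e, n_f, \ldots\}$ spread out along multiples of the periods $n_k$ behaves like the $3$-window computation in \Cref{1hole3lang}, realizing $6$ words on a $3$-window. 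The extra word beyond $2n$ will come from combining this Toeplitz-internal structure with one additional coordinate placed far out in the "nonrecurrent tail" of $x$: at that coordinate $x$ must take a value that is forced (by nonrecurrence, there are only finitely many options there and they are eventually constant, matching the single distinguished coding $x_0$), whereas along $Y$ that same relative coordinate is free to take both values. Appending such a coordinate to a maximal window for $y$ inside $X$ splits at least one word and forces $|L_x(\tau')| \geq 2|\tau'| + 1$.

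More carefully, here is the order of steps I would carry out. First, fix the substitution description $y = \sigma^j(\theta z)$ and pass to the subsequence structure $y^{(i)}$, and note $X \supseteq Y = \overline{\OO(y)}$ is a $1$-hole Toeplitz subshift with period structure $(n_k)$. Second, use nonrecurrence of $x$ to obtain $L$ with $x(0)\ldots x(L-1)$ occurring only at position $0$; deduce that there exists $N_0$ beyond which the behavior of $x$ is "pinned down" in the sense that for the distinguished coding structure, there is a single sequence $x_0$ (in the language of \Cref{coding}) that $x$ must agree with on a cofinite set once we account for how $Y$ sits inside $X$. Third — this is where I expect the real work — show that one can choose a window $\tau$ contained in an interval $[0, M]$ together with a single far-away coordinate $m \gg M$ such that (a) restricted to shifts landing inside $Y$, the window $\tau$ already realizes $2|\tau|$ words in $L_Y(\tau) \subseteq L_x(\tau)$ with the far coordinate $m$ free, splitting words into both extensions, while (b) restricted to the genuine orbit of $x$ (the finitely many shifts $\sigma^n x$ with $n \le N_0$) the coordinate at relative position $m$ contributes nothing new. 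The cleanest way is probably to mimic \Cref{1hole3lang}: take $\tau$ along multiples of $n_k$'s realizing $6$ words on a $3$-window inside $Y$, then argue that nonrecurrence of $x$ lets us attach one more coordinate on which $Y$ sees both letters but which, together with the Toeplitz-type constraints, creates an eighth — or at least seventh — word in $L_x(\tau \cup \{m\})$.

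The main obstacle, and where I would spend most of the effort, is step three: carefully locating the extra coordinate and verifying it genuinely splits a word for $x$ without being "absorbed" by the Toeplitz structure of $Y$. The difficulty is that $x$ might agree with elements of $Y$ on very long prefixes, so one has to push the splitting coordinate far enough into the nonrecurrent tail that $x$ genuinely differs from everything in $Y$ there, while keeping the rest of the window inside a region where $x$ still looks Toeplitz; balancing these requires using the hole structure at a carefully chosen level $k$ and the fact (from nonrecurrence) that $x$'s tail cannot reproduce the hole pattern of $Y$ at position $0$. I would also need the structural input that a nonrecurrent $x$ with $Y \subseteq X$ forces $x$ itself to essentially be a coding of an odometer orbit through a point \emph{not} in the orbit of the base point — so that $x$ has its own "hole" once and then agrees with a fixed Toeplitz pattern — which then collides with the freedom of $Y$ to have holes at every scale and position. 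Once the window is built, the contradiction with $p^*_x(3) \le 6$ (or $p^*_x(n) = 2n$ generally) is immediate.
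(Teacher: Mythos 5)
Your proposal correctly identifies the key tool (\Cref{1hole3lang}) and the first real step of the argument: since $x$ contains arbitrarily long stretches that look like $y$, and $y^{(k)}$ is uniformly recurrent, the window $n_k\tau_k$ already realizes the six words $\{000,111,001,010,100,101\}$ (or their bit-flips) inside $L_x(n_k\tau_k)$. But your step three --- the actual derivation of a contradiction --- is left as a sketch, and the sketch does not close. Appending one far-away coordinate $m$ to a $3$-window produces a $4$-window, for which pattern Sturmian allows $p^*_x(4)=8$ words; since each of the $6$ existing words extends to at least one word, a single ``splitting'' coordinate gives at most $7$ or $8$ words, which is \emph{not} a contradiction (you would need at least three of the six words to split, i.e.\ $9$ words, and you give no mechanism producing that). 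You also give no argument for why the far coordinate splits even one word: the finitely many shifts of $x$ realizing the six words inside the Toeplitz-like stretch could all agree at relative position $m$. Finally, your framing as a proof by contradiction against nonrecurrence is not where the leverage lies.

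The argument that actually works stays on the $3$-window and runs in the positive direction. Because $x$ is pattern Sturmian, $p^*_x(3)=6$, so once the six words above lie in $L_x(n_k\tau_k)$, the words $011$ and $110$ are \emph{forbidden} in that window --- globally, for all shifts of $x$, not just within the Toeplitz-like stretch. Now take any residue class $i\not\equiv i'_k \pmod{n_k}$: from the long stretch matching $y$ it contains a run of at least $N_k>\mathrm{diam}(\tau_k)$ consecutive $1$s, and if it contained a $0$ anywhere, the $0$ nearest to that run would realize $011$ or $110$ as an $n_k\tau_k$-subword of $x$ --- contradiction. Hence every non-hole residue class of $x$ modulo $n_k$ is globally constant, for every $k$, which places $x$ in $\overline{\OO(y)}$ and makes it (uniformly) recurrent directly. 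The missing idea in your write-up is precisely this propagation step: the saturated $3$-window forbids two specific words everywhere, and that prohibition rigidifies the entire sequence $x$; no auxiliary coordinate in the ``nonrecurrent tail'' is needed.
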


\begin{proof}
Suppose that $x$ is pattern Sturmian and has orbit closure $X$ containing nearly simple Toeplitz $y$ with period structure $(n_k)$, and without loss of generality suppose that all residue classes of 
$y \pmod{n_1}$ are constant except for the single $y(i_1 + n_1 \mathbb{N})$ which is simple Toeplitz. Now, fix any $k \geq 2$. Since the orbit closure of $x$ contains $y$, $x$ contains arbitrarily long words on which all but one residue class $\pmod{n_k}$ are constant and equal to the same letter $a_k$ and the remaining residue class is a subword of the simple Toeplitz sequence $m^{(k)} := y(i_k + n_k \mathbb{N})$. Let's begin with the case 
$a_k = 1$. Then by \Cref{1hole3lang}, there exists a $3$-window $\tau_k$ so that $L_{y^{(k)}}(\tau_k) = 
\{000, 111, 001, 010, 100, 101\}$. Since $y^{(k)}$ is uniformly recurrent, there exists $N_k > \textrm{diam}(\tau_k)$ so that every word in $L_{y^{(k)}}$ of length $N_k$ contains all six words in $L_{y^{(k)}}(\tau_k)$. 

Let's say specifically that $p_k < q_k$ are chosen such that $q_k - p_k > n_k N_k$ and that
$x(i) = 0$ if $p_k \leq i < q_k$ and $i \neq i'_k \pmod{n_k}$, and $w_k := x([p_k, q_k) \cap (i'_k + n_k \mathbb{N})$ is a subword of $y^{(k)}$. Then since $w_k$ has length at least $N_k$, it contains $000, 111, 001, 010, 100, 101$ as $\tau_k$-subwords. This means that $L_x(n_k \tau_k)$ contains these words as well, and so cannot contain $011$ or $110$ by the assumption that $x$ is pattern Sturmian. For each $i \in [0, n_k), i \neq i'_k$, $x(i + n_k \mathbb{N})$ contains $N_k > \textrm{diam}(\tau_k)$ consecutive $1$s. If this sequence contained a $0$, then without loss of generality, we could consider it the nearest $0$ to the consecutive $1$s, and we would have either $011, 110$ as a $n_k \tau_k$-subword, a contradiction. Therefore, for each such $i$, $x(i + n_k \mathbb{N})$ is constant of all $1$s. 

If instead $a_k = 0$, we would apply \Cref{1hole3lang} to the bit flip of $y^{(k)}$ to get a $3$-window $\tau_k$ with $L_{y^{(k)}}(\tau_k) = \{000, 111, 011, 101, 110, 010\}$, and the same argument shows that all residue classes of $x$ except one are constant sequences of $0$s. 

In either event, we have shown that for all $k \geq 2$, all residue classes of $x$ except one are constant and equal to 
$a_k$. The same proof, applied to $k = 1$, shows that all residue classes of $x$ except one are constant, and up to a shift coincide with those of $y$. Therefore, $x$ is in the orbit closure of $y$ and is (uniformly) recurrent.
    
\end{proof}

We now examine the case where $y$ is a simple circle rotation coding sequence. 


\begin{lemma}\label{lem:two_coding_same_tail}
Let $I, J$ be two nonempty intervals on $\T$ and $\alpha$ be an irrational number. If there exist arbitrarily long intervals $F \subseteq \N_0$ such that $1_I(n \alpha) = 1_J(n \alpha)$ for $n \in F$ then $I = J$ except possibly at the endpoints.
\end{lemma}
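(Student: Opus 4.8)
The plan is a short argument by contradiction that exploits the minimality of the irrational rotation $R_\alpha\colon t \mapsto t+\alpha$ on $\T$, in the quantitative form that a long block of consecutive iterates hits every fixed open set. So suppose, for contradiction, that $I$ and $J$ are \emph{not} equal except at their endpoints. I would first extract a genuine open arc inside the symmetric difference: since $\partial I \cup \partial J$ consists of at most four points, its complement $\T \setminus (\partial I \cup \partial J)$ is a finite union of open arcs, and on each such arc both $1_I$ and $1_J$ are constant (the arc contains no boundary point of $I$ or of $J$). If $1_I$ and $1_J$ agreed on every one of these arcs, then $I$ and $J$ would coincide off the finite set $\partial I \cup \partial J$, i.e.\ be equal except at endpoints, contrary to assumption. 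Hence there is a nonempty open arc $W$ on which $1_I \ne 1_J$, and by construction $W \subseteq I \triangle J$.

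Next I would invoke the irrationality of $\alpha$ through density of orbits: for any $t \in \T$, the set $\{t + j\alpha : j \in \N_0\}$ is dense in $\T$ and so meets the open set $W$; equivalently, $\{W - j\alpha : j \in \N_0\}$ is an open cover of the compact space $\T$. Passing to a finite subcover and relabelling so that it is indexed by $\{0, 1, \dots, N-1\}$, we obtain $N \in \N$ with $\bigcup_{j=0}^{N-1}(W - j\alpha) = \T$.

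Finally I would apply the hypothesis. Choose an interval $F \subseteq \N_0$ with $|F| \ge N$ and let $m = \min F$, so that $\{m, m+1, \dots, m+N-1\} \subseteq F$. Since $m\alpha \in \T = \bigcup_{j=0}^{N-1}(W - j\alpha)$, there is some $j \in \{0, \dots, N-1\}$ with $(m+j)\alpha \in W$. Setting $n = m+j \in F$, we get $n\alpha \in W \subseteq I \triangle J$, hence $1_I(n\alpha) \ne 1_J(n\alpha)$, contradicting the assumption that $1_I(n\alpha) = 1_J(n\alpha)$ for all $n \in F$. This contradiction shows $I$ and $J$ agree except possibly at their endpoints.

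I expect the only mildly delicate point to be the first step — checking that the failure of ``$I = J$ up to endpoints'' genuinely produces an \emph{open} subarc of $I \triangle J$, and disposing of degenerate intervals (a single point has empty interior, so all such ``intervals'' are equal up to endpoints and the statement is vacuous in that case). The remaining two steps are just the standard fact that $N$ consecutive iterates of an irrational rotation become arbitrarily dense as $N \to \infty$, which follows from minimality and compactness as sketched above.
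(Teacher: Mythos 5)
Your proposal is correct and follows essentially the same route as the paper: both arguments reduce the failure of the conclusion to the existence of a nonempty open arc inside $I \triangle J$, and then use minimality of the irrational rotation to show that any sufficiently long block of consecutive iterates must visit that arc, contradicting the agreement of $1_I$ and $1_J$ on long intervals $F$. Your finite-subcover step is just the standard proof of the syndeticity of return times that the paper cites directly (the only cosmetic point being that one should take $N$ larger than the largest index appearing in the subcover rather than ``relabelling''), so the two proofs are the same in substance.
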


\begin{proof}



If the conclusion does not hold, then $(I \setminus J) \cup (J \setminus I)$ contains a nonempty interval. Since $(n \alpha)_{n \in \N_0}$ is dense on $\T$, and the system $(\T, n\alpha)$ is minimal, the set of return times to $(I \setminus J) \cup (J \setminus I)$ is syndetic. Thus, there exists some $r$ so that if $|F| > r$, then there is some $n \in F$ such that $n \alpha \in (I \setminus J) \cup (J \setminus I)$.   For this $n$ we will have $1_I(n \alpha) \neq 1_J(n \alpha)$, a contradiction.
\end{proof}

\begin{proposition}\label{lem:glue_up}
Let $x \in \{0, 1\}^{\N_0}$ be a pattern Sturmian sequence. Let $I$ be a nonempty, proper interval of $\T$ and $\alpha$ be an irrational number. Suppose there exist arbitrarily long intervals $F \subseteq \N_0$ such that $x(n) = 1_I(n \alpha)$ for all $n \in F$. Then $x(n) = 1_J(n \alpha)$ for all $n \in \N_0$ where $J = I$ except possibly at the end points.
\end{proposition}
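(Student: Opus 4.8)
The plan is to compare $x$ with the genuine rotation coding sequence $z(n) := 1_I(n\alpha)$ on a cleverly chosen sequence of windows, and to extract from the pattern Sturmian bound a ``symbolic address'' map $\psi\colon\N_0\to\T$ which is then forced to be $n\mapsto n\alpha$. (Note that $x$ is not assumed recurrent or minimal here, so the argument must be ``soft'' and cannot use the MEF machinery.) First I would apply \Cref{lem:sturmian_no_constant_words} to $z$, obtaining $k\in\N$ and $N_0$ so that for every $n\ge N_0$ the window $\tau_n:=\{0,k,2k,\dots,(n-1)k\}$ satisfies $|L_z(\tau_n)|=2n$, with the $2n$ words of $L_z(\tau_n)$ in bijection with the $2n$ cells (all of positive length) of the partition $\mathcal{P}_n$ of $\T$ generated by the points $\{y-jk\alpha,\ z-jk\alpha:0\le j<n\}$, where $y,z$ are the endpoints of $I$. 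Encode this via $\Phi_n\colon\T\to\{0,1\}^{\tau_n}$, $\Phi_n(t)(jk)=1_I(t+jk\alpha)$, so that $z(i+\tau_n)=\Phi_n(i\alpha)$ and the cells of $\mathcal{P}_n$ are exactly the level sets of $\Phi_n$.

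The first key step is $L_x(\tau_n)=L_z(\tau_n)$ for all $n\ge N_0$. Since $x$ is pattern Sturmian, $|L_x(\tau_n)|\le 2n$. For the reverse inclusion, fix $n$; since the intervals $F$ are arbitrarily long and $\{i\alpha\}$ equidistributes, some $F$ is long enough that the indices $i$ with $i+\tau_n\subseteq F$ have $\{i\alpha\}$ meeting every cell of $\mathcal{P}_n$, and $x=z$ on $F$, so all $2n$ words $\Phi_n(t)$ lie in $L_x(\tau_n)$. Hence for every $m\in\N_0$ and $n\ge N_0$, $x(m+\tau_n)$ is one of these words, so
\[
a_n(m):=\{\,t\in\T:\Phi_n(t)=x(m+\tau_n)\,\}
\]
is a nonempty cell of $\mathcal{P}_n$. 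As $\tau_n\subseteq\tau_{n+1}$ and $\Phi_{n+1}(t)$ restricts to $\Phi_n(t)$ on $\tau_n$, the cells are nested, $a_{n+1}(m)\subseteq a_n(m)$, and $\mathrm{diam}\,a_n(m)\le\mathrm{mesh}\,\mathcal{P}_n\to 0$ (the defining points of $\mathcal{P}_n$ become dense since $k\alpha$ is irrational). Thus $\bigcap_{n\ge N_0}\overline{a_n(m)}$ is a single point $\psi(m)\in\T$.

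Next I would prove the quasi-equivariance $\psi(m+k)=\psi(m)+k\alpha$. Writing $\tau_{n+1}=\{0\}\cup(k+\tau_n)$, the restriction of $\Phi_{n+1}(t)$ to the coordinates $\{k,\dots,nk\}$, re-indexed onto $\tau_n$, equals $\Phi_n(t+k\alpha)$, while the corresponding restriction of $x(m+\tau_{n+1})$ is $x(m+k+\tau_n)$; hence $a_{n+1}(m)+k\alpha\subseteq a_n(m+k)$, and passing to closures and intersecting over $n\ge N_0$ gives $\psi(m)+k\alpha=\psi(m+k)$. Consequently $\delta(m):=\psi(m)-m\alpha$ satisfies $\delta(m+k)=\delta(m)$, so $\delta$ depends only on $m\bmod k$. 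To finish, fix a residue $r$ and $\varepsilon>0$, choose $n\ge N_0$ with $\mathrm{mesh}\,\mathcal{P}_n<\varepsilon$, then take a window $F$ long enough to contain some $m_0\equiv r\pmod k$ with $m_0+\tau_n\subseteq F$; on $F$, $x(m_0+\tau_n)=z(m_0+\tau_n)=\Phi_n(m_0\alpha)$, so $m_0\alpha\in a_n(m_0)$, whence $d_\T(\psi(m_0),m_0\alpha)\le\mathrm{diam}\,a_n(m_0)<\varepsilon$; as $\delta(m_0)=\delta(r)$, this forces $\delta(r)=0$. Therefore $\psi(m)=m\alpha$ for all $m$. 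Finally, for any $m$ with $m\alpha\notin\partial I$, $x(m)=\Phi_n(t)(0)=1_I(t)$ for $t\in a_n(m)$, and since $\overline{a_n(m)}$ shrinks to $m\alpha$, which lies in one of the open arcs of $\T\setminus\partial I$, we get $x(m)=1_I(m\alpha)$; there are at most two values of $m$ with $m\alpha\in\partial I$ (as $\alpha$ is irrational), and defining $J$ to be the interval with the same endpoints as $I$, each endpoint included exactly when $x$ equals $1$ at the corresponding index, yields $x(m)=1_J(m\alpha)$ for all $m$ with $J=I$ up to endpoints.

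The main obstacle I anticipate is the transition from local agreement — which only controls $x$ deep inside the long intervals $F$ — to a statement about \emph{every} $m\in\N_0$: a fixed $m$ need not lie in any long $F$. This is exactly why one needs both the quasi-equivariance $\psi(m+k)=\psi(m)+k\alpha$ (to reduce to finitely many residue classes modulo $k$) and the freedom to place, for each residue, an arbitrarily deep representative in an arbitrarily long $F$. A secondary technical nuisance is that cells of $\mathcal{P}_n$ may be half-open, so $\psi(m)$ must be defined through closures and this must be carried carefully through the equivariance computation; I expect this to be routine once the framework above is in place.
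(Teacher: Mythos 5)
Your proof is correct, and its skeleton matches the paper's: both apply \cref{lem:sturmian_no_constant_words} to $z(n)=1_I(n\alpha)$ to get the arithmetic windows $\tau_n=\{0,k,\dots,(n-1)k\}$ with $|L_z(\tau_n)|=2n$, both use the arbitrarily long agreement intervals together with syndeticity of visits to each (positive-length) cell to get $L_z(\tau_n)\subseteq L_x(\tau_n)$, and both then invoke the pattern Sturmian bound to force $L_x(\tau_n)=L_z(\tau_n)$ and reduce to the subsequences modulo $k$. Where you genuinely diverge is in converting this language equality into the pointwise identity. The paper does it in two black-box steps: language equality places each $(x(kn+j))_n$ in the minimal orbit closure of $(z(kn+j))_n$, so it is itself a coding by some translate $x_j+J_j$ of $I$, and then \cref{lem:two_coding_same_tail} pins down the unknown phases from the agreement on long intervals. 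You instead build the address map $\psi(m)=\bigcap_n\overline{a_n(m)}$ from the nested cells, prove the equivariance $\psi(m+k)=\psi(m)+k\alpha$ combinatorially, and determine the phase by evaluating $\psi$ at representatives of each residue class placed deep inside long intervals $F$ (using that $\delta(m)=\psi(m)-m\alpha$ is $k$-periodic). I checked the key steps -- nestedness of the cells, mesh of $\mathcal{P}_n$ tending to zero, the re-indexing identity behind the equivariance, and the endpoint bookkeeping at the at most two indices with $m\alpha\in\partial I$ -- and they all hold. Your version is more self-contained (it needs neither the orbit-closure membership fact nor \cref{lem:two_coding_same_tail}) and makes it transparent why nonrecurrence of $x$ causes no trouble, at the cost of somewhat more bookkeeping; the two proofs are otherwise interchangeable.
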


\begin{proof}
Let $y(n) = 1_I(n \alpha)$ for all $n \in \N_0$.
By \cref{lem:sturmian_no_constant_words}, there exists $k$ such that for all sufficiently large $n$, the window $\tau = \{0, k, 2k, \ldots, (n-1)k\}$ satisfies $|L_y(\tau)| = 2n$. By uniform recurrence of $y$, there exists $N$ so that every $N$-letter subword of $y$ contains all words in $L_y(\tau)$. Since $(x(n))_{n \in F} = (y(n))_{n \in F}$ for some interval of length at least $N$, $L_y(\tau) \subseteq L_x(\tau)$. Since $x$ is pattern Sturmian, $L_x(\tau) = L_y(\tau)$.  
Thus the languages of $(x(kn))_{n \in \N_0}$ and $(y(kn))_{n \in \N_0}$ are the same. It follows that $(x(kn))_{n \in \N_0}$ belongs to the orbit closure of $(y(kn))_{n \in \N_0}$, i.e. $x(kn) = 1_{x_0 + J_0}(kn\alpha)$ for all $n \in \N_0$, where $x _0 \in \T$ and the interval $J_0$ is equal to $I$ except possibly at the end points. By a similar argument, for $j = 0, 1, \ldots k - 1$, 
\[
    x(kn + j) = 1_{x_j + J_j}(kn \alpha) \text{ for all } n \geq 0,
\]
where $x_j \in \T$ and $J_i = I$ except possibly at the endpoints. 

It follows from our assumption that for arbitrarily long intervals $F \subseteq \N_0$,
\[
    (x(kn + j))_{n \in \N_0} = (1_{x_j + J_j}(kn \alpha))_{n \in \N_0}
\]
and the sequence
\[
    (y(kn + j))_{n \in \N_0} = (1_I((kn + j)\alpha))_{n \in \N_0} = (1_{I- j \alpha}(kn \alpha))_{n \in \N_0}
\]
are the same. By \cref{lem:two_coding_same_tail}, $x_j + J_j = I - j \alpha$ except possibly at the end points. Letting $I_j = x_j + J_j + j \alpha$, we have $I_j = I$ except possibly at the endpoints and
\[
    (x(kn + j))_{n \in \N_0} = (1_{I_j}((kn + j)\alpha))_{n \in \N_0}.
\]
Since $(n \alpha)$ lands at each endpoint of $I$ at most once, we can modify $I$ at the endpoints to create an interval $J$ such that $x(n) = 1_J(n \alpha)$ for all $n \in \N_0$.
\end{proof}

\begin{proposition}\label{prop:nonrecurrent_pattern_Sturmian}
If $x \in \{0, 1\}^{\N_0}$ is a nonrecurrent, pattern Sturmian sequence whose orbit closure contains an infinite minimal subsystem, then $x(n) = 1_{I}(n \alpha)$ for all $n \in \N_0$, where $\alpha$ is irrational and $I$ is an interval of $\T$ of the form $(k_1 \alpha, k_2 \alpha)$ or $[k_1 \alpha, k_2 \alpha] \bmod 1$ for some $k_1 \neq k_2 \in \N_0$.
\end{proposition}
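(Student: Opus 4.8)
The plan is to extract from the given infinite minimal subsystem $Y\subseteq X:=\overline{\OO(x)}$ a single sequence of known type, transfer its structure to $x$, and finally read off the form of the coding interval from the hypothesis that $x$ is nonrecurrent. First, pick any $y\in Y$. Since $Y$ is infinite and minimal, $y$ is uniformly recurrent and not eventually periodic, while $p^*_y(n)\le p^*_x(n)=2n$; so by \Cref{KZthm}, $p^*_y(n)=2n$, i.e.\ $y$ is pattern Sturmian. By \Cref{mainthm:sturmian}, $y$ is either a recurrent simple circle rotation coding sequence or an element of a nearly simple Toeplitz subshift; in the latter case $Y$ contains a nearly simple Toeplitz sequence, which lies in $X$, so \Cref{thm:contain_minimal_toeplitz_cannot} would force $x$ to be recurrent, contrary to hypothesis. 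Thus, after possibly interchanging the two letters, $y(n)=1_{I_0}(n\alpha)$ for all $n\in\N_0$, for some irrational $\alpha$ and some nonempty proper interval $I_0\subseteq\T$.

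Next I would show that $x$ itself is a coding of the rotation by $\alpha$. Because $y\in\overline{\OO(x)}$ and $y$ is uniformly recurrent, for every $L$ there are infinitely many $m$ with $x([m,m+L))=y([0,L))$ (long prefixes of $y$ occur in $x$, and each contains many occurrences of $y([0,L))$), and re-anchoring such an occurrence gives $x(n)=1_{I_0+m\alpha}(n\alpha)$ for $n\in[m,m+L)$. The coding interval $I_0+m\alpha$ here depends on the anchor, so the main work is to replace it by a single fixed interval. For this I would first choose, using compactness and the finite intersection property applied to the (nested, nonempty, closed) derived sets of the anchor-images $\{m\alpha:\ m\text{ an anchor for length }L\}$, a phase $\beta$ that is a limit point of those images for every $L$; then, for each $L$, pick an anchor $m_L$ with $|m_L\alpha-\beta|$ smaller than the minimal gap among $L$ consecutive points of the $\alpha$-orbit. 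Since those $L$ points are then separated by more than $|m_L\alpha-\beta|$, the sequences $n\mapsto 1_{I_0+m_L\alpha}(n\alpha)$ and $n\mapsto 1_{I_0+\beta}(n\alpha)$ agree on the window $[m_L,m_L+L)$ except at a bounded number of places near $\partial(I_0+\beta)$; deleting those places produces arbitrarily long intervals $F\subseteq\N_0$ on which $x(n)=1_{I_0+\beta}(n\alpha)$ exactly. \Cref{lem:glue_up} then gives $x(n)=1_J(n\alpha)$ for all $n\in\N_0$, where $J$ equals $I_0+\beta$ except possibly at its endpoints, in particular a nonempty proper interval. (Alternatively one can avoid \Cref{lem:glue_up} here by a residue-wise argument: \Cref{lem:sturmian_no_constant_words} produces a window $\tau$ along which $|L_y(\tau)|$ is maximal, forcing $L_x(\tau)=L_y(\tau)$ by pattern Sturmianity, so each decimation $(x(j+kn))_n$ lies in the coding subshift of the rotation by $k\alpha$; \Cref{lem:two_coding_same_tail} plus the same compactness then identify all the residue intervals.)

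It remains to use nonrecurrence of $x$ to pin down $J$. For each $L$ set $C_L=\{t\in\T:\ 1_J(t+n\alpha)=x(n)\text{ for all }n<L\}$, the set of phases whose length-$L$ coding word equals $x([0,L))$; then $0\in C_L$, and $x([0,L))$ recurs in $x$ precisely when $C_L$ meets $\{M\alpha:\ M\ge 1\}$, which is dense in $\T$. Hence if $C_L$ has nonempty interior for every $L$, then $x$ is recurrent. A local analysis at $t=0$ shows that the only constraints that can keep $0$ out of the interior of $C_L$ come from indices $n<L$ with $n\alpha\in\partial J$: each such $n$ forces $t$ to lie (locally) on one fixed side of $0$, the side depending on which endpoint $n\alpha$ is and whether $J$ contains it. If at most one endpoint of $J$ has the form $k\alpha$, or if both do but $J$ is half-open, these one-sided constraints are mutually compatible and an open half-neighbourhood of $0$ still lies in $C_L$, making $x$ recurrent -- a contradiction. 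Therefore both endpoints of $J$ lie in $\{k\alpha:\ k\in\N_0\}$, say $k_1\alpha$ and $k_2\alpha$ with $k_1\ne k_2$, and $J$ is either open or closed; that is, $J=(k_1\alpha,k_2\alpha)$ or $J=[k_1\alpha,k_2\alpha]\bmod 1$. (If the letters were interchanged in the first step, the complement of $J$, which has the same form, is the coding interval of $x$.)

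The step I expect to be the main obstacle is the passage, in the second paragraph, from ``$x$ agrees with translates of $y$'s coding on arbitrarily long windows'' to ``$x$ agrees with one fixed interval coding on arbitrarily long windows'': the re-anchoring inherent in reading $x$ as a coding moves the coding interval around the circle, and controlling the resulting endpoint discrepancies uniformly over longer and longer windows is exactly where the three-gap structure of $\{n\alpha\}$ and the careful choice of the limiting phase $\beta$ are needed. The reduction in the first paragraph is a direct appeal to the cited structure theorems, and the final paragraph is a routine local computation together with density of the rotation orbit.
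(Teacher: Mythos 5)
Your proposal is correct and takes essentially the same route as the paper: reduce the minimal subsystem to (the orbit closure of) a simple circle rotation coding sequence via \cref{mainthm:sturmian} and \cref{thm:contain_minimal_toeplitz_cannot}, apply \cref{lem:glue_up} to realize $x$ as a coding $1_J(n\alpha)$, and then use nonrecurrence to force both endpoints of $J$ into the orbit $\{k\alpha\}$ with $J$ open or closed. Your second paragraph spells out, more explicitly than the paper does before invoking \cref{lem:glue_up}, how to replace the moving translates $I_0+m\alpha$ by a single fixed interval, and your final one-sided analysis of the sets $C_L$ is just an explicit form of the paper's remark that a half-open interval, or an endpoint off the orbit, would make $x$ recurrent.
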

\begin{proof}

Let $x$ be a nonrecurrent, pattern Sturmian sequence whose orbit closure contains an infinite minimal subsystem $Y$.  Since $x$ is pattern Sturmian, so is $Y$, and so by \cref{mainthm:sturmian}, $Y$ is the orbit closure of a simple circle rotation coding sequence or a nearly simple Toeplitz sequence. By \cref{thm:contain_minimal_toeplitz_cannot}, $Y$ cannot be the orbit closure of a nearly simple Toeplitz sequence, and so $Y$ must be the orbit closure of a simple circle rotation coding sequence. Thus for arbitrarily long intervals $F \subseteq \N_0$, $(x(n))_{n \in F} = (y(n))_{n \in F}$ for some $y$ a simple circle rotation coding sequence.  It then follows from \cref{lem:glue_up} that $x(n) = 1_{I}(n \alpha)$ for all $n \in \N_0$, where $\alpha$ is irrational and $I$ is an interval of $\T$.

If $I$ has the form $[a, b)$ or $(a, b]$, then $x$ is recurrent and this contradicts our hypothesis.
If one of the endpoints of $I$ is not in $\{k \alpha: k \in \N_0\}$, then we can change $I$ to an interval of the form $[a, b)$ and the sequence $x$ stays the same. In this case, $x$ is recurrent and again this contradicts our hypothesis. Thus $I = (k_1 \alpha, k_2 \alpha)$ or $[k_1 \alpha, k_2 \alpha]$ for some $k_1, k_2 \in \N_0$ and we are done.
\end{proof}










Finally, we deal with the case where all possible $m$ are periodic; the eventual goal is to show that $m$ is constant and $x$ is almost constant.

\begin{lemma}\label{lem:long_0_and_1_not_pattern_Sturmian}
If $x \in \{0, 1\}^{\N_0}$ has arbitrarily long blocks of consecutive $0$s and arbitrarily long blocks of consecutive $1$s, then $x$ is not pattern Sturmian. 
\end{lemma}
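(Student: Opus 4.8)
The plan is to argue by contradiction: assume $x$ is pattern Sturmian, so in particular $p^*_x(3)=6$, and then produce a single three-element window $\tau$ with $|L_x(\tau)|\ge 7$.

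First I would record the combinatorial consequences of the hypothesis. Having arbitrarily long blocks of both $0$'s and $1$'s forces $x$ not to be eventually constant, so $x$ decomposes into infinitely many maximal blocks which alternate between $0$-blocks and $1$-blocks, and for each letter the block lengths are unbounded. Two facts follow: (i) for every $M$ there exist maximal $0$-blocks and maximal $1$-blocks of length $\ge M$; and (ii) since $x$ is infinite and not eventually constant, every maximal block is both immediately preceded and immediately followed by a block of the other letter, with the sole exception of the first block of $x$, which has no predecessor. Combining these, there is a maximal $1$-block of length $L\ge 2$ that is preceded and followed by $0$'s, i.e.\ the word $0\,1^{L}\,0$ occurs in $x$ at some position $r$.

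Then I would take $\tau=\{0,L,L+1\}$ and exhibit seven shifts of $\tau$ realizing the seven distinct patterns $000,111,001,100,110,011,010$. The pattern $010$ comes from the shift by $r$ (reading $0\,1^L\,0$). The constant patterns $000$ and $111$ come from the start of a $0$-block, resp.\ a $1$-block, of length $\ge L+2$, which exists by (i). The pattern $001$ comes from reading the last $L+1$ symbols of a $0$-block of length $\ge L+1$ together with the $1$ immediately after it; symmetrically $110$ comes from a $1$-block of length $\ge L+1$ and the $0$ after it. Finally $100$ comes from the $1$ immediately before a $0$-block of length $\ge L+1$, reading two of that block's symbols at offsets $L$ and $L+1$; symmetrically $011$ comes from the $0$ immediately before a $1$-block of length $\ge L+1$. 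Each block needed here exists by (i), and for the last two I additionally invoke (ii) to choose a long block other than the (at most one) first block of $x$, so that it genuinely has a predecessor of the correct letter. Since these seven patterns are distinct elements of $\{0,1\}^3$, we get $|L_x(\tau)|\ge 7>6$, hence $p^*_x(3)\ge 7$, contradicting that $x$ is pattern Sturmian.

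The reason for choosing a window of the shape $\{0,L,L+1\}$ is that its two far coordinates are adjacent, hence always lie in a common maximal block, which keeps the bookkeeping trivial; a generic three-element window would not be adapted to the block structure and would be messier. I do not expect a genuine obstacle: the only thing requiring care is that each auxiliary block used actually exists with the stated extra properties (long enough, and with the correct neighbouring letter, i.e.\ not the degenerate first block), which is exactly where the full strength of the hypothesis — arbitrarily long blocks of \emph{both} letters, not merely the presence of a $0$ and a $1$ — gets used; a trivial side check is that all shifts involved are nonnegative.
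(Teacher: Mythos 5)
Your proof is correct. All seven patterns $000,111,001,100,110,011,010$ are genuinely realized by the shifts you describe: the window $\{0,L,L+1\}$ with $L$ the length of a maximal $1$-block flanked by $0$s gives $010$, and the remaining six come from long blocks of each letter together with the single letter of the other type adjacent to them, which exist by maximality (modulo the first-block caveat you handle). Since only $101$ is missing, $p^*_x(3)\ge 7>6$, contradicting pattern Sturmianness.

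Your route differs from the paper's in a way worth noting. The paper works with the \emph{interval} window $\tau=\{0,1,\ldots,n-1\}$ for $n=\ell+2$, where $\ell$ is the exact length of some maximal $0$-block: it first exhibits the $2n$ ``canonical'' words ($0^n$, $1^n$, and for each $0<i<n$ one word ending in $10^i$ and one ending in $01^i$), and then observes that $10^{\ell}1$ and $0^{\ell+1}1$ are two distinct length-$n$ words both ending in $01$, so $p^*_x(n)\ge 2n+1$. In effect the paper's argument only ever uses ordinary word complexity, at the cost of a window whose size depends on the block structure of $x$ and a slightly more delicate count. Your argument instead exploits the freedom in the definition of maximal pattern complexity to choose a non-interval $3$-window adapted to a single maximal block, and obtains the stronger, uniform conclusion $p^*_x(3)\ge 7$ with an entirely explicit list of patterns. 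Both proofs are sound; yours is arguably the more economical for this lemma, while the paper's has the side benefit of showing that already the ordinary complexity $p_x(n)$ exceeds $2n$ for some $n$.
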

\begin{proof}
Assume that $x$ has arbitrarily long blocks of $0$s and $1$s, and for any $n$, let $\tau$ be the window 
$\{0, 1, \ldots, n-1\}$. By considering shifts of $\tau$ whose right edge lies within a block of $0$s of length at least $n$ in $x$, we see that $L_x(\tau)$ contains $0^n$ and, for every $0 < i < n$, a word ending with $10^i$. A similar argument using a long block of $1$s shows that $L_x(\tau)$ contains $1^n$ and, for every $0 < i < n$, a word ending with $01^i$. All of these words are distinct, yielding $2n$ words in $L_x(\tau)$. Now, choose any $\ell$ for which $x$ contains a block of $0$s of length exactly $\ell$, meaning that $x$ contains $10^\ell 1$. Since $x$ contains a block of $0$s of length greater than $\ell$, $x$ also contains $0^{\ell + 1} 1$. 

But both of these words end with $01$, meaning that for $n = \ell + 2$, $L_x(\tau)$ is strictly larger than the previous set of $2n$ words, so $p^*_x(n) > 2n$ and $x$ is not pattern Sturmian.


\end{proof}

\begin{proposition}\label{prop:no-two-orbit}
If the orbit closure of $x \in \{0, 1\}^{\N_0}$ has two distinct periodic orbits, then $x$ is not pattern Sturmian. 
\end{proposition}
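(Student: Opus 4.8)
The plan is to show that if the orbit closure $X = \overline{\OO(x)}$ contains two distinct periodic orbits, then $x$ contains arbitrarily long blocks of $0$s and arbitrarily long blocks of $1$s, so that \Cref{lem:long_0_and_1_not_pattern_Sturmian} applies directly and yields that $x$ is not pattern Sturmian. So the whole task reduces to producing these long constant blocks from the presence of two distinct periodic orbits.

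First I would record what ``two distinct periodic orbits'' gives us. Each periodic orbit in $X$ is the orbit closure of some periodic point, and being a subshift of $X$, each of its points is a limit of shifts of $x$; hence every word appearing in either periodic sequence appears in $x$. Let the two periodic sequences be $u = (u_0 u_1 \cdots u_{p-1})^\infty$ and $v = (v_0 v_1 \cdots v_{q-1})^\infty$ with $u \neq v$ (as shift-orbits). The key point is that $u$ and $v$ cannot both be constant: if both were constant they would be the single periodic orbits $0^\infty$ and $1^\infty$, and that case is exactly the one I need, since $x$ then contains $0^n$ and $1^n$ for all $n$ (each being a subword of the corresponding fixed point, which lies in $X$), so \Cref{lem:long_0_and_1_not_pattern_Sturmian} finishes. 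So I may assume at least one of $u, v$ — say $u$ — is nonconstant, hence contains both letters $0$ and $1$, hence contains the words $01$ and $10$.

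Now I would argue that $x$ must contain arbitrarily long blocks of $0$s or arbitrarily long blocks of $1$s, and in fact both. Since $u$ is periodic and nonconstant, $x$ contains $01$ and $10$ (in fact infinitely often). For the long blocks: suppose toward a contradiction that $x$ has a bound $\ell$ on the length of runs of $1$s. Then the subshift $X$ — being built from limits of shifts of $x$ — also has that bound on runs of $1$s, so every point of $X$, including both periodic points, has runs of $1$s bounded by $\ell$; in particular $u$ and $v$ both avoid $1^{\ell+1}$. Symmetrically, if runs of $0$s in $x$ are bounded, so are they in every point of $X$. If both runs of $0$s and runs of $1$s are bounded in $x$, then $X$ is a subshift of finite-run-length sequences; I would then show this forces $u$ and $v$ to share long enough structure to coincide, contradicting distinctness — the clean way is: any periodic sequence with bounded runs in $X$ is determined by which words of a certain bounded length it contains, and since $X$ has low complexity (pattern Sturmian, hence word complexity at most $2n$ by the Morse–Hedlund–type constraint implied by $p_x^*$), two distinct periodic orbits already force $p_x(n) > n+1$ for all $n$, which is automatic; the real leverage is maximal pattern complexity. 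Actually the cleanest route: if $x$ has bounded runs of both letters, I would directly build a window $\tau$ witnessing $p_x^*(n) > 2n$ by exploiting the two distinct periodic orbits — using shifts landing deep inside a long periodic stretch of $u$ versus one of $v$ — but this is essentially re-deriving \Cref{lem:long_0_and_1_not_pattern_Sturmian}'s idea in a periodic setting. I expect the slicker argument is: two distinct periodic orbits means $X$ contains $0^\infty$ forced, or else both periodic points are nonconstant and then one checks a bounded-run-length subshift containing two distinct periodic orbits must contain arbitrarily long blocks of one letter — which I would nail down by a pigeonhole/combinatorial argument on the two periods $p, q$.

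The main obstacle will be the case analysis when \emph{neither} periodic point is constant. Here I would argue as follows: since $u$ and $v$ are distinct periodic points both in $X$, consider words of length $N := pq$; the word $u_0 \cdots u_{N-1}$ and $v_0 \cdots v_{N-1}$ (read off $u$ and $v$ respectively) are distinct as cyclic words, hence there is a length $N$ after which any occurrence pattern distinguishes them, and both occur in $x$. I would then pick a window $\tau$ of size $n$ consisting of $\{0, N, 2N, \ldots, (n-1)N\}$ and show that by sliding $\tau$ through a long $u$-periodic stretch of $x$ and through a long $v$-periodic stretch, we already collect all $2n$ ``generic'' words, and then one more word from the transition region (between a periodic stretch and its neighborhood), pushing $|L_x(\tau)| > 2n$ — mirroring the argument in \Cref{lem:long_0_and_1_not_pattern_Sturmian} but without needing monochromatic blocks, only long periodic blocks of two different periods. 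The delicate point is ensuring the transition word is genuinely new; I would handle this by choosing $N$ a common period and exploiting that $u \neq v$ means the two periodic stretches ``disagree'' somewhere, so following a $u$-stretch by the letter that a $v$-stretch would have produced (which must happen somewhere in $x$ since both periodic orbits are limits of shifts of $x$, so the orbit of $x$ approaches both and transitions between approaching-$u$ and approaching-$v$ regions) yields a word not obtainable from either pure periodic piece. This transition-word argument is where I expect to spend the most care.
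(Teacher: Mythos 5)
Your overall strategy---reduce to \cref{lem:long_0_and_1_not_pattern_Sturmian}---is the right one, but the reduction you sketch does not work, and the step you defer (``this is where I expect to spend the most care'') is precisely where all the content lies. The central false claim is that $x$ must contain arbitrarily long blocks of $0$s \emph{and} arbitrarily long blocks of $1$s. Take the two periodic orbits to be those of $(01)^\infty$ and $(0011)^\infty$: then every point of $X$ has runs of length at most $2$, yet the orbits are distinct, so neither your ``both runs unbounded'' claim nor your fallback claim that a bounded-run subshift cannot contain two distinct periodic orbits survives this example. Your alternative window argument is also broken: if $N$ is a common period of $u$ and $v$ and $\tau = \{0, N, 2N, \ldots, (n-1)N\}$, then sliding $\tau$ entirely inside a $u$-periodic or $v$-periodic stretch of $x$ produces only \emph{constant} words $0^n$ and $1^n$ (the window samples a single residue class mod $N$, on which a periodic stretch is constant), so you collect at most $2$ words this way, not $2n$; the claim that the periodic stretches ``already collect all $2n$ generic words'' is the opposite of what happens.

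The missing idea is to apply \cref{lem:long_0_and_1_not_pattern_Sturmian} not to $x$ but to an arithmetic subsequence. Let $k$ be a common multiple of the two periods and set $x^{(j)}(n) = x(kn+j)$. Inside a long $w_i$-periodic stretch of $x$, each $x^{(j)}$ is constant, so each $x^{(j)}$ has arbitrarily long monochromatic blocks; the real work is to show that for \emph{some} $j$, the sequence $x^{(j)}$ has arbitrarily long blocks of \emph{both} letters. This is where distinctness of the two orbits enters: if every $x^{(j)}$ had its long blocks all of one color, then the colors read off along the two periodic stretches would agree residue-by-residue mod $k$, forcing $w_1$ and $w_2$ to generate the same orbit. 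Once such a $j$ is found, the lemma gives $p^*_{x^{(j)}}(n) \geq 2n+1$ for some $n$, and monotonicity of maximal pattern complexity under passing to the arithmetic progression ($p^*_x(n) \geq p^*_{x^{(j)}}(n)$, since a window for $x^{(j)}$ dilates to a window for $x$) finishes the proof. You gesture at the ingredients (the common multiple, the pigeonhole on residues) but never assemble them into this chain, and the claims you do commit to are false as stated.
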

\begin{proof} 
Since $x$ is not eventually periodic, it suffices to show that $p_{x}^*(n) \geq 2n + 1$ for some $n$.
Suppose that the orbit closure of $x$ contains periodic sequences $w_1 w_1 \ldots$ and $w_2 w_2 \ldots$ with distinct orbits. Then 
$x$ contains arbitrarily long blocks of the forms $w_1 w_1 \ldots w_1$ and $w_2 w_2 \ldots w_2$.

Let $k$ be a common multiple of the lengths of $w_1$ and $w_2$. Then for every $j \in \{0, \ldots, k - 1\}$, the sequence $(x(kn + j))_{n \in \N_0}$ contains arbitrarily long blocks of $0$s or $1$s. 

We claim that there exists $j$ such that $(x(kn + j))_{n \in \N_0}$ contains arbitrarily long blocks of $1$s and arbitrarily long blocks of $0$s. Assume this is not the case. Let $M$ be such that for all $j$, the longest $0$ or $1$-block of $(x(kn + j))_{n \in \N_0}$ is bounded above by $M$. For $i \in \{1, 2\}$, let $m_i$ be such that the word $x([k m_i, k (m_i + M)])$ is a subword of a block of the form $w_i w_i \ldots w_i$. Because for all $j = 0, \ldots, k - 1$, the size of window $\{km_1 + j, \ldots, k(m_1 + M) + j\}$ is $M + 1$ (in particular greater than $M$), by our assumption, 
\[
    x([k m_1 + j, \ldots, k (m_1 + M) + j]) = x([k m_2 + j, \ldots, k (m_2 + M) + j]).
\]
In particular, for all $j = 0, \ldots, k - 1$,
\[
    x(k m_1 + j) = x(k m_2 + j).
\]
It follows that $w_1, w_2$ generate the same periodic orbit (i.e. one is a rotation of the other) and this is a contradiction.

Fixing $j$ found in the previous paragraph, then the subsequence $(\beta(n))_{n \in \N_0} = (x(kn + j))_{n \in \N_0}$ has arbitrarily long blocks of consecutive $0$s and consecutive $1$s. By \cref{lem:long_0_and_1_not_pattern_Sturmian}, $p_{\beta}^*(n) \geq 2n + 1$ for some $n$ and our lemma follows since $p_{x}^*(n) \geq p_{\beta}^*(n)$.
\end{proof}

\begin{proposition}\label{prop:w-constant}
Let $x \in \{0, 1\}^{\N_0}$ be such that $x$ differs from the infinite sequence $w w w \ldots$ on a set of Banach density zero for some finite word $w$. If $w$ is not a constant word, then $x$ is not pattern Sturmian. 
\end{proposition}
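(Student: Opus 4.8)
The plan is to assume $x$ is pattern Sturmian and derive a contradiction. Write $S=\{n:x(n)\neq(www\cdots)(n)\}$; then $S$ is infinite, since otherwise $x$ is eventually periodic and so not pattern Sturmian. Hence $x$ is not eventually periodic, $p^*_x(n)\geq 2n$ for all $n$ by \Cref{KZthm}, and it suffices to produce one $n$ with $p^*_x(n)\geq 2n+1$. Replacing $w$ by its primitive root (still non-constant) we may assume $w$ is primitive of length $L:=|w|\geq 2$, and we set $x^{(i)}(t):=x(i+Lt)$ for $0\leq i<L$. Each $x^{(i)}$ agrees with the constant word $w(i)$ off the density-zero set $S_i:=\{t:i+Lt\in S\}$, and since $L_{x^{(i)}}(\rho)\subseteq L_x(i+L\rho)$ for every window $\rho$, we have $p^*_{x^{(i)}}(k)\leq p^*_x(k)$; in particular, if some $x^{(i)}$ is not pattern Sturmian, neither is $x$.

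Next I split into two cases. If some $S_i$ has runs (maximal blocks of consecutive integers) of unbounded length, then $x^{(i)}$ has arbitrarily long blocks of $1-w(i)$; it also has arbitrarily long blocks of $w(i)$, because a density-zero set has arbitrarily large gaps, so $x^{(i)}$ — hence $x$ — fails to be pattern Sturmian by \Cref{lem:long_0_and_1_not_pattern_Sturmian}, and we are done. Otherwise every $S_i$, equivalently $S$ itself, has runs of bounded length. Since $S$ is infinite some $S_{i^*}$ is infinite, and after exchanging the symbols $0$ and $1$ globally (which replaces $w$ by $1-w$ and preserves every hypothesis) we may assume $w(i^*)=0$. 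Put $u:=x^{(i^*)}$, the characteristic function of the infinite density-zero set $S_{i^*}$, whose $1$-runs are bounded. Since $u$ is not eventually periodic, $p^*_u(m)\geq 2m$, and combined with $p^*_u(m)\leq p^*_x(m)=2m$ this forces $u$ to be pattern Sturmian. Also fix $i_1$ with $w(i_1)=1$ (possible as $w$ is non-constant) and set $v:=x^{(i_1)}$; then $v$ equals $1$ off a density-zero set, hence has arbitrarily long blocks of $1$s.

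The count rests on the observation that a single window of $x$ simultaneously exposes all the subsequences $x^{(i)}$: for an $m$-window $\rho$ and $\tau:=i^*+L\rho$, running over all shifts $n=Ls+c$ with $0\leq c<L$ shows
\[
L_x(\tau)\ \supseteq\ L_u(\rho)\ \cup\ \{1^m\},
\]
where $L_u(\rho)=L_{x^{(i^*)}}(\rho)$ comes from the shifts with $c=0$, and $1^m\in L_v(\rho)\subseteq L_x(\tau)$ since $v$ has arbitrarily long blocks of $1$s. I would then choose $\rho$ with $|\rho|=m$ so that (i) $\rho$ is maximal for $u$, i.e.\ $|L_u(\rho)|=2m$, and (ii) $1^m\notin L_u(\rho)$, i.e.\ no translate of $\rho$ lies inside $S_{i^*}$. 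Given such a $\rho$, the displayed union is disjoint with at least $2m+1$ elements, so $p^*_x(m)\geq 2m+1$, contradicting that $x$ is pattern Sturmian; this completes the argument.

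The substantive obstacle is step (ii): showing that the almost-constant pattern Sturmian sequence $u$ (here with bounded $1$-runs, hence necessarily non-recurrent, so \Cref{mainthm:sturmian} does not apply directly) admits \emph{some} maximal pattern window realizing no all-$1$s word. The naive window $\{0,1,\dots,m-1\}$ avoids $1^m$ once $m$ exceeds the run bound, but it is far from maximal for an almost-constant sequence, so one must control how the maximal windows of $u$ sit relative to the sparse set $S_{i^*}$. A plausible route: start from any maximal $\rho$; if $1^m\in L_u(\rho)$ then a translate of $\rho$ embeds into $S_{i^*}$, which — $S_{i^*}$ having bounded runs and density zero — forces $\rho$ to contain a very long gap; then enlarge the largest coordinate(s) of $\rho$ so as to destroy every such embedding while checking that $|L_u(\cdot)|$ remains $2m$. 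I expect this bookkeeping (or an equivalent structural statement about almost-constant pattern Sturmian sequences) to be the only real difficulty; the rest of the proof is the routine reduction above.
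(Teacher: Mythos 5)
Your reduction (splitting $x$ into the residue subsequences $x^{(i)}$, disposing of the case of unbounded runs via \Cref{lem:long_0_and_1_not_pattern_Sturmian}, and then trying to combine the pattern language of one mostly-$0$ nonperiodic subsequence $u$ with the constant word $1^m$ supplied by a mostly-$1$ subsequence over a single window of $x$) is exactly the skeleton of the paper's argument. But the step you label (ii) -- the existence of some $m$ and some window $\rho$ with $|L_u(\rho)|=2m$ and $1^m\notin L_u(\rho)$ -- is not a bookkeeping detail you may defer: it \emph{is} the proof. The paper's entire technical effort in \Cref{prop:w-constant} goes into establishing precisely this statement for $m=3$: writing $S$ for the set of $1$s of $u$ and $g_n$ for its gap sequence, it takes the explicit window $\tau=\{0,g_n,g_{n+1}\}$ with $g_n<g_{n+1}$ (such $n$ exists because $S$ is not syndetic), observes that $000,001,100,110,101$ always lie in $L_u(\tau)$, and then shows that if $010\in L_u(\tau)$ fails for \emph{every} admissible $n$, one can bootstrap the relation $u(s_k)=1,\ u(s_k+g_{n+1}-g_n)=0\Rightarrow u(s_k-g_n)=1$ to produce arbitrarily long arithmetic-like strings of $1$s with gaps at most $g_{n_0}$, contradicting $d^*(S)=0$. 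Only then does the seventh word $111$ from the mostly-$1$ residue give $p^*_x(3)\geq 7$.

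Your proposed route to (ii) -- start from an arbitrary maximal window and perturb its largest coordinates to destroy all embeddings of $\rho$ into $S$ while preserving maximality -- is not carried out and is genuinely delicate: maximality is a global property of the window, and there is no a priori reason a local perturbation preserves it. Indeed the claim is already false at $m=2$ for \emph{every} such $u$ (a maximal $2$-window must realize all four patterns, including $11$, so its difference must be a difference of two elements of $S$), which shows the interaction between maximal windows and the sparse set $S$ cannot be dismissed as routine. You also implicitly invoke structural control of almost-constant pattern Sturmian sequences that the paper explicitly does not have (this is the content of its open question about almost constant sequences). So the proposal is a correct reduction with the central lemma missing; to complete it you would have to prove your step (ii), and the paper's explicit $\{0,g_n,g_{n+1}\}$ construction together with the Banach-density bootstrap is, in effect, the proof of that missing step.
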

\begin{proof}
Assume that $x$ and $w$ are as in the statement, and assume that $w$ contains both $0$ and $1$. 
Let $k = |w|$, the length of $w$. Consider the sequences $(x^{(j)}(n))_{n \in \N_0} = (x(kn + j))_{n \in \N_0}$ for $j \in \{0, \ldots, k - 1\}$. For each $j$, $x^{(j)}$ contains arbitrarily long blocks of consecutive $0$s or $1$s or both. However, the last possibility is ruled out by \cref{lem:long_0_and_1_not_pattern_Sturmian}, since $p^*_x(n) \geq p^*_{x^{(j)}}(n)$. 
Since $w$ contains both $0$ and $1$, there are $j_0, j_1 \in \{0, \ldots, k-1\}$ that $x^{(j_0)}$ contains arbitrarily long blocks of $0$s and $x^{(j_1)}$ contains arbitrarily long blocks of $1$s. Because $x$ is nonperiodic, we can choose $j_0, j_1$ so that at least one of $x^{(j_0)}, x^{(j_1)}$ is nonperiodic. Without loss of generality (and by switching $0$ and $1$ if necessary, which does not change any hypothesis or conclusion of the proposition), assume $x^{(j_0)}$ is nonperiodic. 

We will show that there exists a $3$-window $\tau$ such that 
\[
    |L_{x^{(j_0)}}(\tau) \cup L_{x^{(j_1)}}(\tau)| \geq 7.
\]
It then easily follows that $p_{x}^*(3) \geq 7$, implying that $x$ is not pattern Sturmian.

For any $3$-window $\tau$, $L_{\beta_0}(\tau)$ always contains $000$ and $L_{\beta_1}(\tau)$ always contains $111$. Moreover, if $n$ is the location of an $1$ before entering a long block of $0$s, then $x(n + \tau) = 100$. Thus $L_{\beta_0}(\tau)$ always contains $100$, and a similar argument shows it always contains $001$.

Let $S = \{s_1 < s_2 < \ldots\}$ be the set of locations where $1$ appears in $x^{(j_0)}$ and define $g_n = s_{n+1} - s_n$. The set $S$ is not syndetic because $d^*(S) = 0$ and so there exists $n$ such that 
\begin{equation}\label{eq:condition_s_n}
    g_n < g_{n+1}.
\end{equation}
Choose the window
\begin{equation}\label{eq:choose_tau}
    \tau = \{0, g_n, g_{n+1}\}.
\end{equation}
Then 
\[
    x^{(j_0)}(s_n + \tau) = x^{(j_0)}(\{s_n, s_{n+1}, s_n + g_{n+1}\}) = 110
\]
because $s_{n+1} < s_n + g_{n+1} < s_{n+2}$. Furthermore,
\[
    x^{(j_0)}(s_{n+1} + \tau) = x^{(j_0)}(\{s_{n+1}, s_{n+1} + g_n, s_{n+2}]\}) = 101
\]
because $s_{n+1} < s_{n+1} + g_n < s_{n+2}$.

So far with the window $\tau$ in \eqref{eq:choose_tau}, we have shown $L_{x^{(j_0)}}(\tau) \cup L_{x^{(j_1)}}(\tau)$ contains $6$ words $000, 001, 100, 111, 110, 101$. It remains to find $n$ so that we pick up the extra word $010$ when sliding the window $\tau$ along $x^{(j_0)}$. For contradiction, assume there exists no such $n$. 

Fix an $n_0$ such that $g_{n_0+1} > g_{n_0}$. We claim that for all $\ell$, there exists $k \geq 2$ such that $g_{k-1} \leq g_{n_0}$ and $g_{k} > \ell$. For sufficiently large $k$ (more specifically, if $s_k > g_{n_0}$), we have
\begin{equation}\label{eq:layout}
    x^{(j_0)}(s_k - g_n + \tau) = x^{(j_0)}(\{s_k - g_n, s_k, s_k + (g_{n+1} - g_{n})\}) = u1v
\end{equation}
where $u, v \in \{0, 1\}$. By our contradiction assumption, $u = 1$ or $v = 1$. If $s_k$ is the location of a $1$ right before a very large block of $0$s (say the length of this block is larger than $\ell$), then $x^{(j_0)}(s_k + (g_{n_0+1} - g_{n_0})) = 0$ and so this forces $x^{(j_0)}(s_k - g_{n_0}) = 1$. Since $s_{k-1}$ is the location of the last $1$ before $s_k$, we have 
\[
    g_{k-1} = s_k - s_{k-1} \leq s_k - (s_k - g_{n_0}) = g_{n_0}.
\]
On the other hand, since the digit $1$ at the location $s_k$ is followed by an $\ell$-block of $0$s, $g_k = s_{k+1} - s_k > \ell$. Our claim follows.

Let $\ell$ be arbitrary and fix $k$ so that $g_{k-1} \leq g_{n_0}$ and $g_k > \ell g_{n_0} \geq \ell g_{k-1}$.
We will show that $x^{(j_0)}$ contains a sequence of $\ell + 1$ $1$ symbols where the gaps between consecutive $1$s are bounded by $g_{n_0}$. Since $\ell$ is arbitrary this will imply that the upper Banach density of $1$ in $x^{(j_0)}$ is at least $1/g_{n_0}$ and this contradicts the assumption that this density is zero. Thus we will be done.

Let $s_t$ be the location of an $1$ before a block of consecutive $0$s of length greater than $g_k - g_{k-1}$.
Looking at \eqref{eq:layout}, because 
\[
    x^{(j_0)}(s_t) = 1 \text{ and } x^{(j_0)}(s_t + g_{k} - g_{k-1}) = 0,
\]
it must be that $x^{(j_0)}(s_t - g_{k-1}) = 1$. Now let $s_t - g_{k-1}$ plays the role of $s_t$. We then have 
\[
    x^{(j_0)}(s_t - g_{k-1}) = 1 \text{ and } x^{(j_0)}(s_t - g_{k-1} + (g_k - g_{k-1}))= x^{(j_0)}(s_t + g_k - 2 g_{k-1}) = 0
\]
force $x^{(j_0)}(s_t - 2 g_{k-1}) = 1$. Continuing in this way, we obtain $1$ at the following $\ell + 1$ locations in $x^{(j_0)}$
\[
    s_t, s_t - g_{k-1}, \ldots, s_t - \ell g_{k-1}.
\]
At each step, we use the fact that $g_k > \ell g_{k-1}$ to make sure $s_t + g_k - \ell g_{k-1} > s_t$ and so $x^{(j_0)}(s_t + g_k - \ell g_{k_1}) = 0$. We have arrived at a contradiction, and so $x$ is not pattern Sturmian, completing the proof.
\end{proof}

\begin{proof}[Proof of \cref{mainthm:sturmian-nonrecurrent}]

Let $x$ be a nonrecurrent, pattern Sturmian sequence and let $X$ be the orbit closure of $x$. Let $y$ be a uniformly recurrent sequence in $X$. Then $y$ is periodic or pattern Sturmian. If $y$ is pattern Sturmian, by \cref{mainthm:sturmian}, $y$ is either in a nearly simple Toeplitz subshift (in which case it can be taken to be itself nearly simple Toeplitz) or a simple circle rotation coding sequence. According to  \cref{thm:contain_minimal_toeplitz_cannot}, the former case is impossible. If $y$ is a simple circle rotation coding sequence, then by \cref{prop:nonrecurrent_pattern_Sturmian}, $x$ is a (nonrecurrent) simple circle rotation coding sequence 
and we are done. 

Suppose $y$ is periodic and let $Y$ be the finite orbit of $y$ (which is already closed). By \cref{prop:no-two-orbit}, $X$ does not contain any other periodic subsystem. We can further assume that $X$ does not contain any nonperiodic minimal subsystems (otherwise we return to the previous already-treated cases). 

Now \Cref{mainthm:recnotmin} implies that every recurrent point in $X$ belongs to $Y$. It follows that $X$ is uniquely ergodic with the unique measure being the uniform probability measure $\mu$ on $Y$. (This is because due to Poincar\'e's recurrence theorem, the support of any invariant measure contains a recurrent point. Thus, if there were a second invariant measure, the set of recurrent points would be larger than $Y$.) 
By unique ergodicity, $x$ is a generic point for $\mu$, which implies that it differs from some point of $Y$, which must be of the form $.www\ldots$, on a set of zero Banach density. \cref{prop:w-constant} implies that $w$ is a constant word and so $x$ is almost constant. 
\end{proof}

We still do not know exactly which almost constant sequences are pattern Sturmian (recall that $x = 1_S$ for $S = \{s_1, s_2, \ldots\}$ with $s_{k+1} > 2s_k$ is pattern Sturmian (\cite{Kamae-Zamboni-sequence_entropy}), but any sequence starting with $00001011100$ is not), leading to the following question.

\begin{question}\label{q:almcon}
What else can we say about almost constant pattern Sturmian sequences? For instance, are there stronger senses than upper Banach density in which the deviations from constancy are `small'?
\end{question}



\bibliographystyle{abbrv}
\bibliography{refs}

\end{document}